\documentclass{amsart}

\usepackage[margin=60pt]{geometry}
\usepackage{amsmath,amsthm,amssymb,bm}
\usepackage[colorlinks=true, pdfstartview=FitV, linkcolor=blue, citecolor=blue, urlcolor=blue]{hyperref}
\usepackage{cleveref}
\usepackage{mathtools}
\usepackage{graphicx,color}
\usepackage{ytableau}
\usepackage{tikz}
\usepackage[T1]{fontenc}
\usepackage{tgtermes}
\usepackage{todonotes}
\numberwithin{equation}{section}

\newtheorem{thm}{Theorem}[section]
\newtheorem{lem}[thm]{Lemma}
\newtheorem{prop}[thm]{Proposition}
\newtheorem{cor}[thm]{Corollary}
\theoremstyle{definition}
\newtheorem{exam}[thm]{Example}
\newtheorem{defn}[thm]{Definition}

\newtheorem{problem}[thm]{Problem}
\newtheorem{remark}[thm]{Remark}

\newcommand\shift{\operatorname{shift}}

\newcommand\Z{\mathbb{Z}}

\newcommand\fs{\operatorname{fs}}
\newcommand\bla{\boldsymbol\lambda}

\newcommand{\etalchar}[1]{$^{#1}$}

\newcommand{\boxes}[3][white]{
  \foreach \i in {0,...,\numexpr#3-1}{
    \draw[fill=#1, draw = black] (#2,\i) rectangle ++(1,1);
  }
}

\newcommand{\N}{\mathbb{N}}
\newcommand\ppm{\mathsf{pm}}

\newcommand\qand{\quad\text{and}\quad}
\newcommand\Qbinom[3]{\genfrac{[}{]}{0pt}{}{#1}{#2}_{#3}}

\definecolor{darkblue}{rgb}{0.0,0,0.7}
\renewcommand\emph[1]{\textcolor{darkblue}{\it #1}}

\title{Andrews--Gordon type identities with parity restrictions through particle motion}

\author{Jehanne Dousse}
\address{Université de Genève, 7--9, rue Conseil Général, 1205 Genève, Switzerland}
\email{jehanne.dousse@unige.ch}

\author{Jihyeug Jang}
\address{Université de Genève, 7--9, rue Conseil Général, 1205 Genève, Switzerland}
\email{jihyeug.jang@unige.ch}

\begin{document}

\maketitle
% \tableofcontents

\begin{abstract}
  In this paper, we use the particle motion bijection introduced by
  Warnaar and developed by the two authors, Jouhet and Konan, to study
  q-series and partition identities of the Andrews--Gordon type with
  parity restrictions. These restrictions are of the type ``even
  (resp. odd) parts appear an even number of times". We prove
  $q$-series identities where a multisum equals a sum of products,
  which generalise identities of Andrews and Kim--Yee in a similar way
  that Stanton's identities generalised the Andrews--Gordon
  identities. As a consequence of our results, we obtain a simple
  proof of a recent identity of Chern--Li--Stanton--Xue--Yee related
  to Ariki--Koike algebras.

\end{abstract}

\section{Introduction}
A \emph{partition} of a positive integer \( n \) is a non-increasing
sequence \(\lambda= (\lambda_1, \lambda_2, \dots, \lambda_\ell)\) of
positive integers whose sum is \(n\). The integer \(n\) is called the
\emph{weight} of the partition \(\lambda\) and is also denoted $|\lambda|$, and \(\ell\) is called its
\emph{length}. Every partition \( \lambda \) can also be written in
terms of its \emph{frequency sequence} \( f = (f_1, f_2, \cdots ) \),
where \( f_j \) denotes the number of occurrences of the part \( j \)
in \( \lambda \). Its weight is
$|f| = \sum_{k \geq 1} k f_k.$
In this paper, we will often write partitions in
terms of their frequency sequences and represent them graphically with
columns of boxes on the \(x\)-axis.

\begin{exam}
The partition \( \lambda=(6,5,5,5,3,3,1,1,1,1)\) has frequency sequence \(f=(4,0,2,0,3,1,0,0, \dots)\) which can be represented by the picture below.
\center
\begin{tikzpicture}[scale = .3]
        \draw (1,0) -- (8,0);
        \foreach \x in {1,...,7}
      \node at (\x+ .5, -0.5) {\tiny \number\numexpr\x \relax};
        \boxes{1}{4}   
        \boxes{3}{2}   
        \boxes{5}{3}   
        \boxes{6}{1}   
\end{tikzpicture}
\end{exam}

Using the \emph{\( q \)-Pochhammer symbols} (see~\cite{GR})
\begin{align*}
(a;q)_n &:= (a)_n = \prod_{k=0}^{n-1} (1-aq^k),\\
(a_1,\ldots,a_m;q)_n &:=(a_1;q)_n\cdots(a_m;q)_n,
\end{align*}
for $ n \in  \N \cup \{+\infty\}$, the \emph{Rogers--Ramanujan identities}~\cite{RR19} state that for \( a \in \{0,1\} \),
\begin{equation}\label{eq:RR}
  \sum_{n \geq 0} \frac{q^{n^2+ (1-a)n}}{(q)_n} = \frac{1}{(q^{2-a},q^{3+a};q^5)_{\infty}}.
\end{equation}
As combinatorial identities, they assert that for \( a \in \{0,1\} \)
and for all $n \in \N$, the number of partitions
\( \lambda = (\lambda_1, \lambda_2, \dots, \lambda_\ell)\) of \( n \)
such that \( \lambda_i - \lambda_{i+1} \geq 2 \) for all \( i \),
where the part \( 1 \) appears at most \( a \) times, is equal to the
number of partitions of \( n \) into parts congruent to
\( \pm (2-a) \) modulo \( 5 \). Equivalently, in terms of frequency
sequences, they can be written as follows: for \( a \in \{0,1\} \) and
for all \(n \in \N\), the number of partitions \(f\) of \(n\) such
that \( f_i + f_{i+1} \leq 1 \) for all \( i \) and \( f_1 \leq a \),
equals the number of partitions of \(n\) into parts congruent to
\( \pm (2-a) \) modulo \( 5 \).

The Rogers--Ramanujan identities are undoubtedly among the most famous
partition/\(q\)-series identities and appear in several fields of
mathematics: they revealed the role of vertex operator algebras in
representation theory \cite{Lepowsky,Lepowsky2}, arise naturally in
the theory of arc spaces in algebraic geometry \cite{Mourtada}, and
played a key role in the exact solution of the hard hexagonal model in
mathematical physics \cite{Baxter}, to name only a few examples. For
more detail about these identities, see the book \cite{Sills}.

The Rogers--Ramanujan identities were extended combinatorially by
Gordon~\cite{Go}.
\begin{thm}[Gordon's identities~\cite{Go}]\label{thm:Gordon}
For integers
\( k \geq 1 \) and \( 0 \leq r \leq k \),  the number of
partitions \( \lambda =  (\lambda_1, \lambda_2,  \dots, \lambda_\ell)\) of \( n \) such that
\( \lambda_i - \lambda_{i+k} \geq 2 \) for all \( i \), and where the
part \( 1 \) appears at most \( (k-r) \) times, is equal to the
number of partitions of \( n \) into parts not congruent to
\( 0, \pm (k-r+1) \) modulo \( 2k+3 \).
\end{thm}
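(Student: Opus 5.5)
To prove Theorem~\ref{thm:Gordon}, I translate the difference condition into the frequency language used throughout the paper. A partition satisfies \( \lambda_i - \lambda_{i+k} \geq 2 \) for all \(i\) exactly when its frequency sequence satisfies \( f_j + f_{j+1} \leq k \) for all \( j \geq 1 \). The condition on the part \(1\) becomes \( f_1 \leq k-r \). Set \( i = k-r+1 \in \{1,\dots,k+1\} \). It then suffices to prove the analytic identity
\begin{equation*}
\sum_{f} q^{|f|} \;=\; \frac{(q^{i},q^{2k+3-i},q^{2k+3};q^{2k+3})_\infty}{(q)_\infty},
\end{equation*}
where the sum runs over frequency sequences with \(f_j+f_{j+1}\le k\) and \(f_1\le i-1\). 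The right-hand side is, by Euler's product, the generating function of partitions with no part congruent to \(0,\pm i\) modulo \(2k+3\).

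I would pass through the Andrews--Gordon multisum
\begin{equation*}
\sum_{n_1\ge \cdots \ge n_k\ge 0}\frac{q^{N_1^2+\cdots+N_k^2+N_i+\cdots+N_k}}{(q)_{n_1-n_2}\cdots(q)_{n_{k-1}-n_k}(q)_{n_k}},
\qquad N_j = n_j+\cdots+n_k ,
\end{equation*}
and carry out two steps. The sum here follows Andrews' indexing, with \(N_j\) the suffix sums, so that the linear term has the right shape.

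\textbf{Step 1 (multisum \(=\) product).} I would apply the Jacobi triple product to the right-hand side and then unfold it with \(k\) iterations of the Bailey lemma. Start from the unit Bailey pair, insert the \(k\)-fold Bailey chain, and evaluate the final \(\beta_n\) as \(n\to\infty\). The dependence on \(i\) is handled, as in Andrews' proof, by the \(q\)-difference equations
\begin{equation*}
J_{k,i}(a;x) - J_{k,i-1}(a;x) = (xq)^{i-1} J_{k,k-i+1}(a;xq)
\end{equation*}
satisfied by the Andrews \(J\)-functions. These determine the family uniquely from the initial conditions, and both sides satisfy them.

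\textbf{Step 2 (partitions \(=\) multisum).} This is where the particle-motion viewpoint of the paper enters. A frequency sequence with \(f_j+f_{j+1}\le k\) is read as \(k\) layers of particles. Layer \(s\) consists of those columns, or adjacent pairs of columns, where the local height reaches at least \(s\); this gives \(n_s\) particles in it, where \(n_s\) denotes the number of pairs \(\{j,j+1\}\) with \(f_j+f_{j+1}\ge s\), suitably counted. Packing all particles to the left gives a minimal configuration of weight \(\sum N_j^2\), corrected by \(N_i+\cdots+N_k\) coming from \(f_1\le i-1\). Moving particles to the right, with the rule that particles cross each other in a prescribed way that preserves the difference condition, then contributes the factor \(1/(q)_{n_1-n_2}\cdots(q)_{n_k}\).

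I expect the main obstacle to be the bijectivity of the particle motion in Step 2: defining crossings so that the moves are reversible and the weight increase is exactly \(1\) per move. If this proves too delicate, I would fall back on Andrews' purely analytic route instead. In that route one shows that the partition generating functions, refined by a variable \(x\) tracking the number of parts, satisfy the same \(q\)-difference system in \(i\). This follows by conditioning on \(f_1\) and removing the first column, which shifts \(x\mapsto xq\). Combined with Step 1, uniqueness of solutions to the system gives the theorem.
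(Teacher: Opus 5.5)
Your overall architecture is the one the paper's framework indicates. The paper takes the theorem from Gordon, but it observes that Warnaar's bijection (\Cref{thm:Warnaar}) identifies the generating function of $\mathcal{B}_{r,k}$ with the left-hand side of \eqref{eq:AG}, and $\mathcal{B}_{r,k}$ is exactly your set. \Cref{thm:AG} then gives the product, so your Step~1 is just \Cref{thm:AG}.

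Step~2, however, has a genuine gap, exactly where you anticipated, and its one concrete ingredient is wrong. The number of indices $j\ge 1$ with $f_j+f_{j+1}\ge s$ is not invariant under particle motion. Take $k=2$, $r=0$: the frame $(f_1,f_2)=(2,0)$ (partition $(1,1)$) and its image $(1,1)$ after one move (partition $(2,1)$, weight $3$) come from the same shape $s_1=s_2=1$. Yet your counts are $(1,1)$ and $(2,1)$, and the latter would force a minimal weight larger than $3$.

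The right statistics are global. Here $s_m$ is the number of particles of height at least $m$ in the frame $\tilde{\fs}_{r,k}(\bla)$, so $\lambda^{(m)}$ has $s_m-s_{m+1}$ parts. From a given $f$, the $s_m$ can only be recovered by undoing the moves.

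Likewise, ``particles cross in a prescribed way'' is not yet a rule. The missing content is the order of the moves (rightmost particle first), the requirement $f_v+f_{v+1}\le h$ for all $v\ge u$, the focus shifts by which a particle passes others, and the invertibility. All of this is precisely what \Cref{thm:Warnaar} provides (equivalently \Cref{th:u0-bij} with $u=1$, after adding $\max(0,m-k+r)$ to the parts of $\lambda^{(m)}$). You should cite it rather than re-derive it.

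Your displayed multisum also mixes two indexings. With $n_1\ge\dots\ge n_k$ and denominators $(q)_{n_1-n_2}\cdots(q)_{n_k}$, the exponent must be $n_1^2+\dots+n_k^2+n_i+\dots+n_k$, i.e.\ \eqref{eq:AG} with $i=k-r+1$. Suffix sums $N_j$ go instead with free $n_j\ge0$ and denominators $(q)_{n_1}\cdots(q)_{n_k}$. As written, already for $k=2$, your sum is not the Andrews--Gordon sum.

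Your fallback is a correct proof by a genuinely different route, namely Andrews' proof (\emph{The Theory of Partitions}, Ch.~7), once the normalization is fixed. Your displayed $q$-difference equation is in Andrews' normalization (pair sums at most $k-1$, modulus $2k+1$) and holds only for $a=0$. In your normalization, conditioning on $f_1=i-1$ and removing the first column gives $G_{k,i}(x)-G_{k,i-1}(x)=(xq)^{i-1}G_{k,k-i+2}(xq)$. This is the system satisfied by $J_{k+1,i}(0;x;q)$.

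Together with $G_{k,0}=0$ and $G_{k,i}(0)=1$, this system determines all $x^m$-coefficients. The $i=1$ equation relates equal powers of $x$, so you telescope over $i$ and divide by $1-q^m$. Finally, $J_{k+1,i}(0;1;q)$ equals the product by the Jacobi triple product. This route needs neither the multisum nor the Bailey chain; the chain, started from the unit pair with a fixed $a$, only reaches $i\in\{1,k+1\}$ anyway.

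The fallback buys a short, self-contained proof of the equinumerosity. The particle-motion route buys the finer, bijective statement that the partition side equals the multisum refined by $(s_1,\dots,s_k)$. That finer statement is what the paper exploits for its parity-restricted versions.
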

Equivalently, the difference condition above can be rephrased in terms of frequencies as follows:
\begin{equation}\label{eq:comb_AG}
  \text{partitions of $n$ whose frequency sequence \( (f_i)_{i \geq 1} \) satisfies
    \( f_i + f_{i+1} \leq k \) for all \( i \), and
    \( f_1 \leq k-r \).}
\end{equation}

Andrews~\cite{AndrewsGordon} then found a $q$-series counterpart to the Gordon identities, obtaining what is now known as the Andrews--Gordon identities. 

\begin{thm}[Andrews--Gordon identities~\cite{AndrewsGordon}]\label{thm:AG}
Let \(k \geq 1\) and \(0 \leq r \leq k\) be two integers. Then
\begin{equation}\label{eq:AG}
\sum_{s_1\geq\dots\geq s_{k}\geq0}\frac{q^{s_1^2+\dots+s_{k}^2+s_{k-r+1}+\dots+s_{k}}}{(q)_{s_1-s_2}\dots(q)_{s_{k-1}-s_{k}}(q)_{s_{k}}}=\frac{(q^{k+1-r},q^{k+2+r},q^{2k+3};q^{2k+3})_\infty}{(q)_\infty}.
\end{equation}
\end{thm}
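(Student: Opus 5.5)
Since Theorem~\ref{thm:Gordon} is already available, the natural route is combinatorial: show that the multisum on the left of \eqref{eq:AG} is the generating function of the partitions described in \eqref{eq:comb_AG}. By Theorem~\ref{thm:Gordon}, that generating function equals $\prod 1/(1-q^m)$ over $m \not\equiv 0,\pm(k-r+1) \pmod{2k+3}$. The Jacobi triple product then rewrites this product as $(q^{k+1-r},q^{k+2+r},q^{2k+3};q^{2k+3})_\infty/(q)_\infty$, since $k+2+r \equiv -(k+1-r) \pmod{2k+3}$. So the whole task reduces to a bijective (or weight-preserving) interpretation of the multisum. For this I would use the particle motion idea mentioned in the abstract.

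\textbf{Step 1: decompose the multisum term.} Set $n_i = s_i - s_{i+1}$ with $s_{k+1}=0$, so that $n_i$ counts "particles of size $i$". Then $\sum_i s_i^2 = \sum_{i,j} \min(i,j)\, n_i n_j$. The term $q^{\sum s_i^2 + s_{k-r+1}+\dots+s_k}$ should be the minimal weight of an admissible configuration containing $n_i$ particles of height $i$. Here a particle of height $i$ is a block of $i$ boxes occupying two adjacent columns, so that $f_j+f_{j+1}$ is saturated up to $k$. The linear term $s_{k-r+1}+\dots+s_k$ accounts for the constraint $f_1\le k-r$: it pushes particles of height greater than $k-r$ away from column $1$. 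Equivalently, it shifts the ground state by one.

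\textbf{Step 2: show the factor $1/((q)_{n_1}\cdots(q)_{n_k})$ counts the motions.} Each of the $n_i$ particles of size $i$ may be moved forward independently. The moves must be defined so that particles of different sizes can cross by a fixed rule preserving $f_j+f_{j+1}\le k$. Then the moves of the size-$i$ particles are encoded by a partition into at most $n_i$ parts, which contributes $1/(q)_{n_i}$.

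\textbf{Step 3: prove the map is a bijection.} The map sends (ground state, motions) to partitions with $f_j+f_{j+1}\le k$ and $f_1\le k-r$. Its inverse pulls particles back greedily, largest first. I expect this step to be the main obstacle. It requires defining the crossing rule when a particle of size $i$ passes one of size $j$, and checking three things: the move increases the weight by exactly one; the frequency condition is preserved; and the inverse is well defined, including near the boundary column $1$ where $f_1\le k-r$ applies. I would first settle the case $k=2$ by hand to find the correct crossing rule. I would then verify the general case by induction on $k$: removing one layer of particles reduces the problem to $k-1$ with shifted $s_i$.

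Failing that, a purely analytic fallback is available. Andrews' original method uses Bailey pairs: iterate the Bailey lemma $k$ times starting from the unit Bailey pair. This produces the multisum with the linear term coming from the choice of $r$, and the result is evaluated by the Jacobi triple product.
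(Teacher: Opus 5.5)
\paragraph{Verdict.} Your overall route is the one the paper points to, but the step that carries all the content is missing. By \Cref{thm:Gordon} it suffices to show that the multisum generates the sequences in \eqref{eq:comb_AG}. Your Step~1 correctly identifies the ground state: it is Warnaar's $\tilde{\fs}_{r,k}(\bla)$ in \Cref{thm:Warnaar}, of weight $s_1^2+\dots+s_k^2+s_{k-r+1}+\dots+s_k$. The product side needs no Jacobi triple product, since $(q^{k+1-r},q^{k+2+r},q^{2k+3};q^{2k+3})_\infty/(q)_\infty$ is literally $\prod_m(1-q^m)^{-1}$ over $m\not\equiv 0,\pm(k+1-r)\pmod{2k+3}$.

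\paragraph{The gap.} The gap is Steps~2--3. You do not define the move; you say you would have to discover the ``crossing rule'' by experimenting with $k=2$. So nothing shows that $1/\bigl((q)_{s_1-s_2}\cdots(q)_{s_k}\bigr)$ counts the admissible configurations built on a given ground state, and the proposal is a programme rather than a proof. The paper does not reprove this step either. It imports it as Warnaar's theorem (\Cref{thm:Warnaar}), equivalently the restriction of \Cref{th:u0-bij} to $k$-tuples whose parts in $\lambda^{(m)}$ are at least $\max(0,m-k+r)$. Citing that result closes your argument in one line.

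\paragraph{What rebuilding it requires.} If you want to prove the step yourself, you need the following four ingredients.
\begin{itemize}
\item[(a)] The move. Focus on $(f_u,f_{u+1})$, where $h=f_u+f_{u+1}$ dominates all pair sums to its right. If $f_{u+1}+f_{u+2}<h$, move one box from $u$ to $u+1$; otherwise shift the focus to $u+1$. By \Cref{pro:explicit_pm}, a tall particle passing shorter ones slides the intermediate frequencies two places to the left. This is what makes every move cost exactly $1$ and keeps $f_i+f_{i+1}\le k$.
\item[(b)] The order. Particles are not moved independently. One moves them from the rightmost (shortest) to the leftmost (tallest). Within a fixed height $m$, the rightmost particle receives the largest part of $\lambda^{(m)}$. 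This convention is what makes the move counts of the height-$m$ particles a partition into $s_m-s_{m+1}$ non-negative parts, hence the factor $1/(q)_{s_m-s_{m+1}}$.
\item[(c)] The inverse. Repeatedly locate the leftmost pair realising the maximal pair sum and run the motion backwards to its home position. You must then prove that the recovered counts are non-increasing within each height and that every intermediate configuration is valid.
\item[(d)] The boundary. You must show that the forced $\max(0,m-k+r)$ initial moves on particles of height $m$ correspond exactly to the condition $f_1\le k-r$.
\end{itemize}

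\paragraph{Two smaller points.} Your induction on $k$ would pull the height-$k$ particles back to the $s_k$ pairs $(k,0)$ on the left, then recurse on a configuration with pair sums $\le k-1$ shifted by $2s_k$. That is consistent at the level of weights, but it presupposes (c), so it does not remove the difficulty.

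Your fallback is misdescribed. Andrews' original proof used recurrences, not Bailey pairs. Iterating Bailey's lemma from the unit pair with fixed $a$ yields only $r=0$ ($a=1$) and $r=k$ ($a=q$). The cases $0<r<k$ need an extra device such as the Bailey lattice.
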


While it is clear that the right-hand side of \eqref{eq:AG} is the generating function for partitions into parts not congruent to \( 0, \pm (k-r+1) \) modulo \( 2k+3 \), it is much harder to prove that the left-hand side is the generating function for partitions with difference/frequency conditions. This fact was originally proved by Andrews \cite{AndrewsGordon} using recurrences, and it took more than twenty years until a bijective proof was found by Warnaar \cite{WarnaarParticle}. Note that this bijection only shows the correspondence between the sum side of \eqref{eq:AG} and the frequency conditions; finding a bijective proof of the Andrews--Gordon identities themselves is still an open problem.

Warnaar's bijection relies on \emph{particle motion} on frequency
sequence diagrams starting from a minimal partition satisfying the
difference conditions. It was applied on usual frequency sequences
\( f = (f_1, f_2, \cdots ) \). In \cite{DJK2024}, the first author,
Jouhet and Konan generalised Warnaar's approach by allowing parts of
size $0$, hence considering frequency sequences of the form
\( f = (f_0, f_1, \cdots ) \). For the purposes of this paper, we need
to consider parts of size $0$ and $-1$. For simplicity and generality,
we describe the theory in an even more general context by applying
particle motion to generalised frequency sequences allowing parts of
any size. So from now on, a \emph{partition} will be a finite
non-increasing sequence of \emph{non-negative} integers, and a
\emph{generalised frequency sequence} will be a sequence
$(f_i)_{i \in \Z}$ such that only finitely many of the $f_u$'s are
non-zero. The principle of Warnaar's particle motion and its
generalisations is described in Section \ref{sec:pm}, together with
applications. For instance, in \cite{DJJ25}, the two authors and
Jouhet used their generalisation of particle motion to prove this
theorem of Stanton, which generalises Theorem \ref{thm:AG}.

\begin{thm}\cite[Theorem~3.2]{Stanton2018}\label{thm:Sta3.2}
  Let \( j,r \geq 0 \) and \( k \geq 1 \) be integers such that
  \( j+r \leq k \). Then
\begin{equation}\label{eq:sAG}
\sum_{s_1\geq\dots\geq s_{k}\geq0}\frac{q^{s_1^2+\dots+s_{k}^2-s_1-\dots-s_j+s_{k-r+1}+\dots+s_{k}}}{(q)_{s_1-s_2}\dots(q)_{s_{k-1}-s_{k}}(q)_{s_{k}}}=\sum_{s=0}^j\frac{(q^{k+1-r+j-2s},q^{k+2+r-j+2s},q^{2k+3};q^{2k+3})_\infty}{(q)_\infty}.
\end{equation}
\end{thm}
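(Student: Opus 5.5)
The plan is to reduce \eqref{eq:sAG} to the Andrews--Gordon identities (Theorem~\ref{thm:AG}) by interpreting the multisum combinatorially with particle motion. The approach follows the one used in \cite{DJJ25}, but it allows generalised frequency sequences with parts of size $0$ (and, if convenient, $-1$).

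\textbf{Step 1: the sum side as a generating function.} Particle motion identifies the summand indexed by $s_1\geq\dots\geq s_k\geq 0$ with the contribution of a minimal configuration of $s_1$ particles, arranged in $k$ layers, together with its free moves. Replacing the linear term $s_{k-r+1}+\dots+s_k$ by $-s_1-\dots-s_j+s_{k-r+1}+\dots+s_k$ shifts the starting position of the particles in the first $j$ layers one unit to the left. In the language of this paper, particles in those layers may therefore occupy position $0$.

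I therefore expect the left-hand side of \eqref{eq:sAG} to be the generating function of generalised frequency sequences $(f_0,f_1,\dots)$ satisfying:
\begin{itemize}
\item $f_i+f_{i+1}\leq k$ for all $i\geq 0$;
\item $f_0\leq j$;
\item $f_0+f_1\leq k-r+j$, or an equivalent boundary condition.
\end{itemize}
The precise constraint at positions $0,1$ has to be read off from the minimal partition. This is the step I expect to be the main obstacle. One must check that the particle motion bijection of Section~\ref{sec:pm} still works with the mixed shifts. Concretely, the minimal configuration must satisfy the frequency conditions, and the moves must be reversible within the class, in particular when a particle reaches position $0$.

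\textbf{Step 2: decompose by the value of $f_0$.} Since parts of size $0$ contribute nothing to the weight, the generating function splits as a sum over $f_0=t$ with $0\leq t\leq j$. For fixed $t$, the remaining sequence $(f_1,f_2,\dots)$ is an ordinary Gordon partition with $f_i+f_{i+1}\leq k$ and $f_1\leq \min(k-t,\;k-r+j-t)$. Using Theorem~\ref{thm:Gordon} in the form \eqref{eq:comb_AG}, the $t$-th piece equals
\[
\frac{(q^{k+1-r'},q^{k+2+r'},q^{2k+3};q^{2k+3})_\infty}{(q)_\infty},
\]
where $r'$ is determined by the bound on $f_1$.

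\textbf{Step 3: match the right-hand side.} If the boundary condition in Step~1 is exactly right, the values $k+1-r'$ run over $k+1-r+j-2s$ for $s=0,\dots,j$. Here one should use that the product is invariant under $a\mapsto 2k+3-a$, so that out-of-range indices fold back into the correct range. If the naive decomposition by $f_0$ instead yields $r'=r-j+t$, the correct structure is probably a difference of two shifts. In that case I would first try to telescope: I would use the $q$-series form of \eqref{eq:AG} together with the recurrence $S_{r}-S_{r-1}=q^{\cdot}\,(\dots)$ for the Andrews--Gordon sums to convert consecutive terms into the step-$2$ pattern $j-2s$.

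As a sanity check of the whole boundary analysis, I would verify the cases $j=0$ (which recovers Theorem~\ref{thm:AG}) and $k=1$, $j=1$ directly.
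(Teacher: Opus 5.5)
\textbf{Verdict: genuine gap.} Your architecture is the right one. It is the route of \cite{DJJ25}, whose ingredients are restated in Section~\ref{sec:pm} and which Section~\ref{sec:proofmain} follows with a parity condition added: insert $\bla$ by particle motion into a frame sequence allowing parts $0$, split by $f_0$, apply Gordon's theorem to each piece, and fold with $a\mapsto 2k+3-a$. But Step~1, which carries all the content, is left open, and the boundary condition you propose is wrong.

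The correct class is $\mathcal{Z}_{j,r,k,0}$ of Definition~\ref{def:XYZ}: $f_i+f_{i+1}\le k$ for $i\ge 0$, $f_0\le j$, and $2f_0+f_1\le k-r+j$ (equivalently $f_0\le j-\max\{f_0+f_1-(k-r),0\}$). Your condition $f_0+f_1\le k-r+j$ differs from it exactly on sequences with $f_0\ge 1$ and $f_0>j-r$, which exist as soon as $j,r\ge 1$. It also gives the wrong count. For $k=2$, $j=r=1$, the coefficient of $q$ on the left of \eqref{eq:sAG} is $1$, since only $(s_1,s_2)=(1,0)$ contributes. Your class, however, has two sequences of weight $1$: $(f_0,f_1)=(0,1)$ and $(1,1)$, and the second violates $2f_0+f_1\le 2$. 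Your checks $j=0$ and $k=j=1$ cannot detect this, because the two conditions coincide there.

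For the same reason the telescoping fallback in Step~3 cannot work. Write $P(r'):=(q^{k+1-r'},q^{k+2+r'},q^{2k+3};q^{2k+3})_\infty/(q)_\infty$. Telescoping would have to prove $\sum_{t=0}^{j}P(r-j+t)=\sum_{s=0}^{j}P(r-j+2s)$, which is false; already $P(1)\neq P(2)$ in the example above.

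\textbf{What is missing: the right objects and the bijection.}
\begin{itemize}
\item \emph{Objects.} Take $k$-tuples $\bla$ in which every part of $\lambda^{(m)}$ is at least $\max\{m-j,0\}+\max\{m-(k-r),0\}$, inserted by $\Lambda$ into the $0$-frame sequence $\fs_0(\bla)$ of weight $\sum_m s_m^2-\sum_m s_m$. Summing the lower bounds by parts gives $\sum_{m>j}s_m+\sum_{m>k-r}s_m$, hence exactly the exponent in \eqref{eq:sAG}.
\item \emph{Why the factor $2$ on $f_0$.} Consider a single pair $(m,0)$ at positions $(0,1)$ with $m>k-r$. Each of its first $2m$ particle motions lowers $2f_0+f_1$ by one, whereas moves at index $0$ leave $f_0+f_1$ unchanged. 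The forced number of moves is $2m-(k-r+j)$, which brings $2f_0+f_1$ from $2m$ down to exactly $k-r+j$.
\item \emph{The bijection.} That $\Lambda$ maps this set bijectively onto $\mathcal{Z}_{j,r,k,0}$ is \cite[Proposition~6.8]{DJJ25} (Proposition~\ref{pro:bij_XZ} with $u=0$). This means particle motion never leaves the class and can be reversed inside it when particles sit at $0$. It is exactly the step you flag as the main obstacle but do not resolve.
\end{itemize}

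With this in hand, Steps~2 and~3 need no telescoping. Fixing $f_0=t$ leaves $f_1\le k-\max\{t,\,2t-(j-r)\}$, so $r'=\max\{t,2t-(j-r)\}$. If $j\le r$, then $r'$ runs over $\{r-j+2s:0\le s\le j\}$. If $j>r$, then $r'$ runs over $\{0,1,\dots,j-r\}\cup\{j-r+2,j-r+4,\dots,j+r\}$. This relabelling of $f_0$ is the inverse of the map $\phi$ of Proposition~\ref{prop:bij_YZ}. In the case $j>r$, the symmetry $P(r')=P(-1-r')$ identifies the values $j-r-1,j-r-3,\dots$ with the negative indices $r-j+2s$ in \eqref{eq:sAG}, as you anticipated.
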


\medskip

In this paper, our goal is to use particle motion to prove $q$-series
identities related to partitions with parity conditions.

The study of partition identities with the frequency conditions
\eqref{eq:comb_AG} but with added parity conditions originated from
the work of Andrews \cite{Andrews2010}, and was followed by several
others since \cite{Kursungoz2010, KY2013, KL2024, CLSXY2024}. Let us
first describe Andrews' result, slightly reformulated to fit the
context of generalised frequency sequences.

\begin{defn}
\label{def:W}
Let \( \mathcal{W}_{k,a}\) (resp. \( \overline{\mathcal{W}}_{k,a} \)) denote the
set of generalised frequency sequences \( (f_i)_{i \in \Z} \) such that
\begin{itemize}
\item  $f_i + f_{i+1} \leq k$ for all \( i > 0 \),
\item  $f_i=0$ for all \( i \leq 0 \),
\item $f_1 \leq a$,
\item $f_i$ is even if $i$ is even (resp. odd). That is, even (resp. odd) parts appear an even number of times.
\end{itemize}
\end{defn}

Andrews proved the following two theorems using recurrence equations.
\begin{thm}[Andrews \cite{Andrews2010}]
\label{thm:1}
For \( k \geq a \geq 0 \) and \( k \equiv a \pmod{2}  \),
    \begin{multline*}
 \sum_{f \in\mathcal{W}_{k,a}} q^{|f|}
      =\sum_{s_1 \ge \cdots \ge s_{k} \geq 0}
      \frac{q^{s_1^2 + \cdots + s_{k}^2 + 2s_{a+1} + 2s_{a+3} + \cdots + 2s_{k-1}}}
      {(q^2;q^2)_{s_1 - s_2} \cdots (q^2;q^2)_{s_{k-1} - s_{k}}(q^2;q^2)_{s_{k}}}\\
      = \frac{(-q;q^2)_\infty(q^{a+1}, q^{2k+3-a}, q^{2k+4}; q^{2k+4})_\infty}{(q^2;q^2)_\infty}.
    \end{multline*}
  
\end{thm}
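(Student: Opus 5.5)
Two equalities must be proved: the generating function of \(\mathcal{W}_{k,a}\) equals the multisum, and the multisum equals the product. I would prove the first bijectively by particle motion, in the style of Warnaar and \cite{DJK2024,DJJ25}, and the second by specialising a known Andrews--Gordon/Bressoud type identity.

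\emph{Parity reduction.} Since \(k\equiv a \pmod 2\), write \(k=2K+\epsilon\) and \(a=2A+\epsilon\). For \(f\in\mathcal{W}_{k,a}\), even parts have even multiplicity. Fix the odd parts and pair up the even parts. The constraint \(f_i+f_{i+1}\le k\) then has a rigid structure, because of the parity pattern on \(f_{2i}\). I would encode \(f\) by a configuration of \(k\) rows (\(k\) particle types), following the standard Warnaar encoding of a frequency sequence with \(f_i+f_{i+1}\le k\) as \(k\) nested layers. Here \(s_j\) is the number of particles in layer \(j\) and above, so \(s_1\ge\dots\ge s_k\).

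\emph{Minimal configurations and particle motion.} For each \((s_1,\dots,s_k)\), I would construct the minimal element of \(\mathcal{W}_{k,a}\) with that layer profile. Its weight should be \(s_1^2+\dots+s_k^2+2s_{a+1}+2s_{a+3}+\dots+2s_{k-1}\). The extra terms \(2s_{a+1},2s_{a+3},\dots\) account for the fact that, beyond the first \(a\) layers allowed at part \(1\), particles must start at a shifted position. The shift is by \(2\) because adjacent pairs of layers move together to preserve the parity of even frequencies.

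Next, I would show that each particle moves in steps of size \(2\). Moving a particle from an odd position past an even one must keep even positions at even multiplicity, so a single particle jumps over a pair. This makes the moves generate the factors \(1/(q^2;q^2)_{s_j-s_{j+1}}\), by ordering the moves of the \(s_j-s_{j+1}\) particles in layer \(j\) exactly as in \cite{WarnaarParticle}. The bijectivity argument is the same as in the general setting of Section~\ref{sec:pm}: moves are invertible, and every element is reached from a unique minimal configuration by a unique non-increasing sequence of moves.

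\emph{Product side.} The multisum is the specialisation \(q\to q^2\) of a Bressoud/Andrews--Gordon-type multisum with linear terms in odd-indexed positions, combined with a quadratic form evaluated at \(q\). I would try to match it with the even-modulus identity of Bressoud, in the form
\[
\sum \frac{q^{s_1^2+\dots+s_k^2+\dots}}{(q^2;q^2)_{s_1-s_2}\cdots(q^2;q^2)_{s_k}}=\frac{(-q;q^2)_\infty(q^{a+1},q^{2k+3-a},q^{2k+4};q^{2k+4})_\infty}{(q^2;q^2)_\infty}.
\]
Failing that, I would prove it by a Bailey chain. The route is to insert the Bailey pair relative to \(a=1\) with base \(q^2\) and parameter \(\rho=-q\) at one step, which produces the \((-q;q^2)_\infty\) factor, and then apply the Jacobi triple product with modulus \(2k+4\).

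The main obstacle is the minimal configuration and the motion rules near part \(1\) and at even positions. I must verify that the parity constraint forces moves of size exactly \(2\) without getting stuck, and that the boundary condition \(f_1\le a\) yields exactly the linear terms \(2s_{a+1}+\dots+2s_{k-1}\). I would check this first on small \(k\), namely \(k=1,2\), before generalising.
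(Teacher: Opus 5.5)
The paper does not prove this statement itself. \Cref{thm:1} is quoted from Andrews, who used recurrences. The paper's particle motion recovers only the first equality, as the case $a=0$ of \Cref{thm:even1} (indeed $\mathcal{Z}^e_{0,b,k}=\mathcal{W}_{k,k-2b}$). The product side is imported from Andrews and is used as an input in \Cref{pro:2}.

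\textbf{Product side.} This is where your proposal has a genuine gap. The \emph{form} of Bressoud's identity you want to match is just the target with the exponent left blank. The actual identity \eqref{eq:Br} is in base $q$ with modulus $2k+2$, and substituting $q\to q^2$ gives $q^{2s_i^2}$, not $q^{s_i^2}$. So it does not specialise to $(-q;q^2)_\infty(q^{a+1},q^{2k+3-a},q^{2k+4};q^{2k+4})_\infty/(q^2;q^2)_\infty$. Your Bailey-chain fallback is only partly right. In base $q^2$ relative to $1$, take $\rho_1\to\infty$ and $\rho_2=-q$ at \emph{every} step, not at one step. This gives $\alpha'_n=q^{n^2}\alpha_n$, the factors $(-q;q^2)_{s_i}$ telescope, and the unit pair then yields exactly the case $a=k$. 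But repeating a fixed step produces the same linear term at every level. The alternating terms $2s_{a+1}+2s_{a+3}+\dots+2s_{k-1}$ for $a<k$ therefore need a further ingredient that you do not supply, and this is precisely the part the paper takes from Andrews.

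\textbf{Sum side.} Your plan agrees with the paper in spirit, but the decisive step is only asserted. The mechanism you describe is also not what particle motion does: it is the block $(h,0)$ at $(u,u+1)$ that moves, not a single particle jumping over a pair. The missing ingredient is \Cref{lem:pm_parity}/\Cref{lem:pm_parity_odd}:
\begin{itemize}
\item Start from a frame whose nonzero entries all sit at odd positions, for instance $\fs_1$. The paper uses $\fs_{-1}$, which is equivalent by \Cref{prop:shift}.
\item By \Cref{pro:explicit_pm}, inserting $\lambda_i$ particle motions leaves every entry fixed or shifted by two, except the final pair $(v,v+1)$.
\item A weight count modulo $2$, together with the fact that this pair still sums to $h$, shows that its even-indexed entry has the parity of $\lambda_i$.
\end{itemize}
You need this equivalence in both directions. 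In particular, it is what guarantees that the preimage of every $f\in\mathcal{W}_{k,a}$ has all inserted parts even, and your \emph{unique sequence of moves} argument does not give this.

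The condition $f_1\le a$ then comes for free from Warnaar's lower bound $\max(0,m-a)$ on the parts of $\lambda^{(m)}$ (\Cref{thm:Warnaar}, as reformulated after \Cref{th:u0-bij}). Your weight for the minimal configurations is correct, but the reason is not that pairs of layers move together. It comes from rounding that lower bound up to the even number $2\lceil \max(0,m-a)/2\rceil$ and summing by parts. This gives $2s_{a+1}+2s_{a+3}+\dots+2s_{k-1}$ on top of $|\fs_1(\bla)|=s_1^2+\dots+s_k^2$. Since you leave exactly these points as the \emph{main obstacle}, the first equality is not established either.
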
  

\begin{thm}[Andrews \cite{Andrews2010}]
\label{thm:3}
For \( k \geq a \geq 1 \), \( k \) even and \( a \) odd,
    \begin{multline*}
 \sum_{f \in \overline{\mathcal{W}}_{k,a}} q^{|f|}
      = \sum_{s_1 \ge \cdots \ge s_{k} \geq 0}
      \frac{q^{s_1^2 + \cdots + s_{k}^2 + (s_1 - s_2 + s_3 - \cdots + s_{a-2} - s_{a-1}) + (s_{a} + s_{a+1} + \cdots + s_{k})}}
      {(q^2;q^2)_{s_1 - s_2} \cdots (q^2;q^2)_{s_{k-1} - s_{k}}(q^2;q^2)_{s_{k}}}\\
      = \frac{(-q^2;q^2)_\infty(q^{a+1}, q^{2k+3-a}, q^{2k+4}; q^{2k+4})_\infty}{(q^2;q^2)_\infty}.
    \end{multline*}
\end{thm}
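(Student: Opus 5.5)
We prove the two equalities separately. The first, between the generating function of $\overline{\mathcal{W}}_{k,a}$ and the multisum, will come from particle motion. The second, between the multisum and the product, will come from a known Andrews--Gordon/Bressoud type evaluation. The main work is in the first.

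\textbf{Set-up.} Write $k=2m$ and $a=2b+1$. In $\overline{\mathcal{W}}_{k,a}$ each odd part $i$ has $f_i=2g_i$ even, and the constraint $f_i+f_{i+1}\le k$ is symmetric. So we decompose a sequence in $\overline{\mathcal{W}}_{k,a}$ into pairs of particles sitting on odd positions and single particles sitting on even positions. We encode the multisum index as $s_j=\#\{\text{particles at height}\ge j\}$ in the frequency diagram, with $(s_1,\dots,s_k)$ ranging over $s_1\ge\dots\ge s_k\ge 0$.

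\textbf{Particle motion.} For a fixed $s$, we build the minimal configuration in $\overline{\mathcal{W}}_{k,a}$ with that column profile. This is analogous to Warnaar's staircase $f_1+f_2=k$ patterns, except that layers are forced onto positions of the right parity, so particles move in steps of $2$. We compute its weight and expect $s_1^2+\dots+s_k^2+(s_1-s_2+\dots-s_{a-1})+(s_a+\dots+s_k)$. The alternating term records the layers $j<a$ that can start at position $1$ or $2$ depending on parity. The term $s_a+\dots+s_k$ records the shift forced by $f_1\le a$ for the top $k-a+1$ layers.

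Each block of $s_i-s_{i+1}$ particles then moves independently by even amounts, preserving parity and the frequency conditions. This is the generalised-frequency-sequence version from Section~\ref{sec:pm}, where zero or negative sizes are allowed to handle the boundary. The motion yields the factor $1/(q^2;q^2)_{s_i-s_{i+1}}$. Bijectivity follows the same pattern as in \cite{DJK2024,DJJ25}, using the reverse motion that pushes particles back to the minimal configuration.

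\textbf{Main obstacle.} The hardest step is the careful definition of the moves near the boundary $f_1\le a$ together with parity. A layer of odd height might otherwise be forced to split at position $1$. I would first handle this by allowing a fictitious part $0$ or $-1$, with even multiplicity in the relevant case, so that every layer starts freely, and then subtracting the boundary contribution. This is likely where the alternating linear term arises.

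\textbf{Product side.} For the multisum-to-product equality, we substitute $q\to q^2$-type base changes. We then write the sum as a specialization of the Andrews--Bressoud even-modulus identity (the $(q^2;q^2)$ denominators with modulus $2k+4$). Equivalently, we apply Theorem~\ref{thm:1} with $q\mapsto q$ after noting that conjugating parity (shifting all parts by one, i.e. $f_i\mapsto f_{i+1}$ on the generalised sequence) exchanges the conditions defining $\mathcal{W}$ and $\overline{\mathcal{W}}$, up to the boundary. The factor $(-q^2;q^2)_\infty$ versus $(-q;q^2)_\infty$ is exactly the product over unrestricted parts of the complementary parity. Alternatively, we can verify the product via the Jacobi triple product after reducing the multisum through the Bailey lattice.
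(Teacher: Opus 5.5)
\textbf{The gap: the product side.} Theorem~\ref{thm:1} cannot be applied at the same parameters, because it requires $k\equiv a \pmod 2$, while here $k$ is even and $a$ is odd. The shift $f_i\mapsto f_{i+1}$ does exchange the two parity conditions, but it is not weight-preserving. It changes $|f|$ by the number of parts, and it creates or removes parts $0$. So it relates $\overline{\mathcal{W}}_{k,a}$ to sets of type $\mathcal{W}$ only after weighting by $q^{\pm\text{(number of parts)}}$, and Theorem~\ref{thm:1} gives no product formula for that refinement. Your explanation of the change from $(-q;q^2)_\infty$ to $(-q^2;q^2)_\infty$ as ``the product over unrestricted parts of the complementary parity'' is a heuristic, not an argument. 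Nor is the multisum a specialization of Bressoud's identity \eqref{eq:Br}: there the denominators are $(q)_{s_i-s_{i+1}}$ (only the last one is in base $q^2$) and the modulus is $2k+2$. The Bailey-lattice sentence is only a pointer. For comparison, the paper does not prove this equality at all. Theorem~\ref{thm:3} is quoted from Andrews' recurrence proof \cite{Andrews2010}, and \eqref{eq:W_bar} is then used as an input in the proof of Theorem~\ref{thm:main}. So you must either cite it or give an actual $q$-series proof.

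\textbf{The sum side.} Your plan is, in spirit, the paper's route. The first equality is the case $j=0$, $r=k-a$ of the particle-motion argument for Theorem~\ref{thm:main}. That argument starts from $\fs_0(\bla)$, requires the parts of $\lambda^{(m)}$ to be even and at least $m+\max\{m-a,0\}$, and obtains the boundary conditions ($f_0=0$, $f_1\le a$) from Proposition~\ref{pro:bij_XZ}. However, the decisive step, which you only assert, is Lemma~\ref{lem:pm_parity}. By Proposition~\ref{pro:explicit_pm}, every entry other than the moving pair is either unchanged or shifted by exactly $2$. A weight count modulo $2$ then shows that the odd position of the final focus pair carries an even frequency if and only if $\lambda_i$ is even. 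The converse direction, applied step by step along the inverse map, is what shows that every element of $\overline{\mathcal{W}}_{k,a}$ comes from a tuple with all parts even. Your appeal to ``reverse motion'' does not address this.

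Particles also do not move in steps of $2$. Each particle motion moves one particle one step, and after an odd number of motions the parity condition fails; only the number of motions applied at each index is even.

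Finally, no boundary subtraction is needed. Round the odd lower bounds $m\le a$ up to $m+1$, and round $2m-a$ (for $m>a$) up to $2m-a+1$. This contributes $2s_1+2s_3+\dots+2s_a+2s_{a+1}+\dots+2s_k$, which together with $|\fs_0|=\sum_i s_i^2-\sum_i s_i$ gives exactly the stated exponent, including the alternating term.
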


Then, using Gordon marking, Kur{\c{s}}ung{\"o}z \cite{Kursungoz2010}
was able to give a formula for the generating function of
\(\mathcal{W}_{k,a}(n)\) for \( k \not\equiv a \pmod{2}\) and
\(\overline{\mathcal{W}}_{k,a}(n)\) for \( k \) odd and \( a \) even
as a sum. Note that in these particular cases, as well as for the sum
side of the Andrews--Gordon identities, Gordon marking is equivalent
to particle motion. Then Kim and Yee \cite{KY2013} reproved these sum
sides and were able to give product (or sum of products) expressions
as well, using recurrences and \(q\)-series identities. Their results
are the following.

\begin{thm}[Kim--Yee \cite{KY2013}, sum side by Kur{\c{s}}ung{\"o}z \cite{Kursungoz2010}]
\label{thm:2}
For \( k \geq a \geq 0 \) and \( k \not\equiv a \pmod{2}  \),
    \begin{multline*}
 \sum_{f \in\mathcal{W}_{k,a}} q^{|f|}
      =\sum_{s_1 \ge \cdots \ge s_{k} \geq 0}
      \frac{q^{s_1^2 + \cdots + s_{k}^2 + 2s_{a+1} + 2s_{a+3} + \cdots + 2s_{k}}}
      {(q^2;q^2)_{s_1 - s_2} \cdots (q^2;q^2)_{s_{k-1} - s_{k}}(q^2;q^2)_{s_{k}}}\\
      = \frac{(-q^3;q^2)_\infty(q^{a+2}, q^{2k+2-a}, q^{2k+4}; q^{2k+4})_\infty}{(q^2;q^2)_\infty} + q \cdot \frac{(-q^3;q^2)_\infty(q^{a}, q^{2k+4-a}, q^{2k+4}; q^{2k+4})_\infty}{(q^2;q^2)_\infty}.
    \end{multline*}
\end{thm}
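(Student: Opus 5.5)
Two equalities need proof: the partition generating function equals the multisum, and the multisum equals the sum of two products. I would treat them separately.

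\emph{Step 1: the multisum via particle motion.} Because $k\not\equiv a \pmod 2$, every $f\in\mathcal{W}_{k,a}$ has $f_1\le a$ and even $f_{2i}$. I would look for a minimal configuration in which $k$ particles sit in levels $1,\dots,k$. The particles at levels $1,\dots,a$ start at the part $1$. The remaining $k-a$ particles are paired, and each pair starts at the part $2$ (so that even parts keep even multiplicity). One particle is left over because $k-a$ is odd. I would place it at part $1$ as well, since the part $1$ is odd and has no parity constraint. The weight $2s_{a+1}+2s_{a+3}+\cdots+2s_k$ means the pairs are indexed by $(a+1,a+2),\dots$, with the last index $k$ standing alone. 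This has to be fixed consistently with the exponent, and I would do it by direct checking. Particles then move in steps of two, since moving one unit would break parity. This is what produces $(q^2;q^2)$ denominators and the $s_i^2$ terms, exactly as in the treatment of Theorem~\ref{thm:1} in Section~\ref{sec:pm}. The Warnaar-type bijection gives the multisum once we check that moves preserve the constraints $f_i+f_{i+1}\le k$ and parity. Kurşungöz's result settles this sum side anyway, so this step can be cited if needed.

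\emph{Step 2: the product side.} I would reduce the multisum to Theorem~\ref{thm:1} with different parameters. Split according to $s_k$. Alternatively, apply the standard identity $q^{2s_k}=1-(1-q^{2s_k})$ to the lone linear term $2s_k$ and shift $s_k\mapsto s_k+1$. This should write the $(k,a)$ sum, with $k\not\equiv a$, as a combination of two sums whose linear terms match Theorem~\ref{thm:1} with $a$ replaced by $a+1$ or $a-1$; in these $k\equiv a\pm1 \pmod 2$, which is the parity Theorem~\ref{thm:1} requires.

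A cleaner route is combinatorial. Condition on the frequency of the part $1$ in $f\in\mathcal{W}_{k,a}$. If we instead view the odd part $1$ as shifted, the set decomposes into pieces counted by $\mathcal{W}_{k,a+1}$ and $q\,\mathcal{W}_{k,a-1}$, after rescaling part sizes. This matches the two terms: the products $(q^{a+2},q^{2k+2-a},q^{2k+4};q^{2k+4})$ and $(q^{a},q^{2k+4-a},q^{2k+4};q^{2k+4})$ are exactly the Theorem~\ref{thm:1} products at $a+1$ and $a-1$. The prefactor must also change from $(-q;q^2)_\infty=(1+q)(-q^3;q^2)_\infty$ to $(-q^3;q^2)_\infty$. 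This suggests using the recurrence $F_{k,a}(1+q)=F_{k,a+1}+qF_{k,a-1}$, read with the appropriate parity interpretation, where $F$ denotes the generating function.

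\emph{Main obstacle.} Establishing this three-term relation is the hard step. The relation I would try to prove at the level of multisums is
\[(1+q)\,S_{k,a}=S_{k,a+1}+q\,S_{k,a-1},\]
where $S_{k,a}$ denotes the multisum with its own parity pattern of linear terms. I would attempt it by telescoping over the index where the linear-term patterns differ, using $q^{2s}=1-(1-q^{2s})$ to absorb one factor $(q^2;q^2)$. At the boundary cases $a=0$ and $a=k$, I would handle $S_{k,-1}$ and $S_{k,k+1}$ by convention; for instance, the product at $a=0$ is $(1,\dots)=0$, which kills the second term. Then substitute Theorem~\ref{thm:1} for $S_{k,a\pm1}$ and divide by $1+q$.
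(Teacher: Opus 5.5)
Your overall plan is correct, and it is in fact how the paper recovers this statement. Theorem~\ref{thm:2} is quoted from Kim--Yee and Kur{\c{s}}ung{\"o}z, but with parameter $a$ it coincides with Theorem~\ref{thm:even2} at parameters $(0,b,k)$, where $b=(k-a+1)/2$. For these parameters $\widetilde{\mathcal Z}^e_{0,b,k}=\mathcal W_{k,a}$, $\mathcal Z^e_{0,b,k}=\mathcal W_{k,a-1}$ and $\mathcal Z^e_{0,b-1,k}=\mathcal W_{k,a+1}$. So Proposition~\ref{prop:endtheven2} is exactly your relation $(1+q)F_{k,a}=F_{k,a+1}+qF_{k,a-1}$. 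The two sums on its right are evaluated by Theorem~\ref{thm:even1} with first parameter $0$, which is Theorem~\ref{thm:1}.

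However, the proposal has a gap exactly at the step you call the main obstacle: the recurrence is never proved. The plan to obtain it on the multisums by telescoping with $q^{2s}=1-(1-q^{2s})$ is only a hope, since no identity is written down, and working at the multisum level is much harder than necessary. Your ``cleaner route'' paragraph also misstates the combinatorics. $\mathcal W_{k,a}$ does not decompose into pieces counted by $\mathcal W_{k,a+1}$ and $q\,\mathcal W_{k,a-1}$, no rescaling of parts is involved, and the parity fact that makes the argument work is never stated.

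The missing idea is that the relation is a two-line bijection on frequency sequences, and the multisums need not enter at all. Step~1 separately identifies $F_{k,a}$ with the multisum. You have $\mathcal W_{k,a-1}\subseteq\mathcal W_{k,a}\subseteq\mathcal W_{k,a+1}$. Adding one part $1$ maps $\{f\in\mathcal W_{k,a}: f_1=a\}$ bijectively onto $\{f\in\mathcal W_{k,a+1}: f_1=a+1\}$. The only condition at risk is $f_1+f_2\le k$, and this is where $k\not\equiv a \pmod 2$ is used: $f_2$ is even and $k-a$ is odd, so $f_2\le k-a$ forces $f_2\le k-a-1$. Hence $F_{k,a+1}-F_{k,a}=q\,(F_{k,a}-F_{k,a-1})$.

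Here $F_{k,-1}=0$ because $\mathcal W_{k,-1}=\emptyset$, which matches the vanishing factor $(1;q^{2k+4})_\infty$ at $a=0$. The case $a=k$ you mention cannot occur, since $a\not\equiv k$. Substituting Theorem~\ref{thm:1} at $a\pm1$ and dividing by $1+q$, using $(-q;q^2)_\infty=(1+q)(-q^3;q^2)_\infty$, gives the product side.

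Finally, your Step~1 is only a sketch: the minimal configuration and its weight are left to ``direct checking''. The sum side therefore rests either on the citation to Kur{\c{s}}ung{\"o}z, or on the paper's bijection $\Lambda$ from $\widetilde{\mathcal X}^e_{0,b,k}$ onto $\widetilde{\mathcal Z}^e_{0,b,k}=\mathcal W_{k,a}$.
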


\begin{thm}[Kim--Yee \cite{KY2013}, sum side by Kur{\c{s}}ung{\"o}z \cite{Kursungoz2010}]
\label{thm:4}
For \( k \geq a \geq 0 \), \( k \) odd and \( a \) even,
    \begin{multline*}
 \sum_{f \in \overline{\mathcal{W}}_{k,a}} q^{|f|}
      = \sum_{s_1 \ge \cdots \ge s_{k} \geq 0}
      \frac{q^{s_1^2 + \cdots + s_{k}^2 + (s_1 - s_2 + s_3 - \cdots + s_{a-1} - s_{a}) + (s_{a+1} + \cdots + s_{k})}}
      {(q^2;q^2)_{s_1 - s_2} \cdots (q^2;q^2)_{s_{k-1} - s_{k}}(q^2;q^2)_{s_{k}}}\\
      = \frac{(-q^2;q^2)_\infty(q^{a+2}, q^{2k+2-a}, q^{2k+4}; q^{2k+4})_\infty}{(q^2;q^2)_\infty}.
    \end{multline*}
     
\end{thm}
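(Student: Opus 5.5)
We prove Theorem~\ref{thm:4} in two independent halves: first the combinatorial equality between $\sum_{f\in\overline{\mathcal W}_{k,a}}q^{|f|}$ and the multisum, then the equality between the multisum and the product.

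\textbf{Sum side via particle motion.} We treat $\overline{\mathcal W}_{k,a}$, with $k$ odd and $a$ even, following Warnaar's approach in the generalised form of Section~\ref{sec:pm}.
\begin{enumerate}
\item Since odd parts occur an even number of times, we group the particles at odd positions into pairs. This turns a sequence in $\overline{\mathcal W}_{k,a}$ into a configuration in which each elementary move changes the weight by $2$, which explains the denominators $(q^2;q^2)$.
\item For each choice $s_1\ge\dots\ge s_k$ (with $s_i-s_{i+1}$ counting particles of a given ``height''), we build the minimal configuration satisfying $f_i+f_{i+1}\le k$, $f_1\le a$ and the parity constraints.
\item We compute its weight and aim to get $s_1^2+\dots+s_k^2+(s_1-s_2+\dots+s_{a-1}-s_a)+(s_{a+1}+\dots+s_k)$. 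The alternating term should come from the first $a$ rows, which may begin at part $1$ but are forced into pairs on odd positions. The linear term $s_{a+1}+\dots+s_k$ should come from the rows forced to start at part $2$.
\item The moves of the particles, two steps at a time, then generate $1/\bigl((q^2;q^2)_{s_1-s_2}\cdots(q^2;q^2)_{s_k}\bigr)$.
\end{enumerate}
This is essentially Kur\c{s}ung\"oz's result reread through particle motion. Moving the picture to allow parts $0$ or $-1$ is the natural way to make the minimal configuration uniform.

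\textbf{Product side.} We reduce the multisum to a known identity. Here $k=2m+1$ is odd. The natural tool is to sum out the variable $s_k$, or to apply a Bailey lemma step in base $q^2$, turning the exponent into the $(q^2;q^2)$-analogue of Andrews--Gordon or Bressoud with $k+1$ even. We then compare with Theorem~\ref{thm:3}, which has even $k$ and odd $a$ and the same modulus $2k+4$. Concretely:
\begin{enumerate}
\item Write the multisum as $F_{k,a}(q)$.
\item Find a relation in which the Theorem~\ref{thm:3} sum for parameters $(k+1,a+1)$ gives $(q^{a+2},q^{2k+2-a},q^{2k+4};q^{2k+4})_\infty$. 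One has $(k+1)+2\cdot 2-(a+1)\cdot$ matching the modulus $2(k+1)+4$, which is unfortunately different from $2k+4$.
\item Since the moduli differ, use instead Bressoud's even-modulus identity in base $q^2$:
\[
\sum \frac{q^{2(N_1^2+\cdots+N_m^2)+\dots}}{(q^2;q^2)_{n_1}\cdots(q^4;q^4)_{n_m}}
\]
with modulus $4m+6=2k+4$. To connect to it, sum over the ``odd-paired'' variables with the $q$-binomial theorem (Cauchy/Euler), which produces the factor $(-q^2;q^2)_\infty$.
\item Finally apply Jacobi's triple product.
\end{enumerate}

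\textbf{Main obstacle.} The main difficulty is this last algebraic reduction: matching the alternating linear term $s_1-s_2+\dots-s_a$ to a Bressoud-type sum after summing out half the variables. If the direct reduction fails, we will set up $q$-difference equations in $a$ (shifting $a\to a-2$ by moving particles off part $1$). We will then check that both sides satisfy the same recurrence as the Kim--Yee/Andrews functions $J_{k,a}$, with matching initial conditions at $a=0$.
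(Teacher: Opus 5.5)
\textbf{Gap: the product side is never proved, and the identity you name for it has the wrong modulus.}

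Note first that the paper does not prove this theorem. It quotes it from Kur\c{s}ung\"oz and Kim--Yee, and then uses \eqref{eq:W_bar} as an \emph{input} in Proposition~\ref{pro:gf Y}. Its particle-motion machinery recovers only the sum side, as the case $j=0$, $r=k-a$ of Propositions~\ref{pro:1}, \ref{prop:bij_XZ_odd} and \ref{prop:bij_YZ_odd}. Your sum-side outline is compatible with that, but the mechanism is not a pairing of particles. Particles still move one step at a time. Lemma~\ref{lem:pm_parity} is what forces each insertion $\lambda_i$ to be even, so $\lambda^{(m)}$ is a partition into even parts $\ge m+\max\{m-a,0\}$ placed on $\fs_0(\bla)$.

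On the product side, Step~2 is abandoned. Step~3 cannot work as stated: Bressoud's identity \eqref{eq:Br} with $m'$ variables in base $q^2$ has modulus $2(2m'+2)\equiv 0 \pmod 4$, whereas $2k+4\equiv 2\pmod 4$ for odd $k$. The fallback does not close the gap either. A recurrence in $a$ anchored at $a=0$ presupposes the $a=0$ case, which for $k\ge 3$ is already a Rogers--Ramanujan-type identity. Andrews-type uniqueness arguments also need an extra variable counting parts, which you do not introduce.

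\textbf{How to fix it.} Your instinct to sum out half the variables is the right one, once it is aimed at Andrews--Gordon \eqref{eq:AG} in base $Q=q^2$ with $m=(k-1)/2$ variables. That identity has modulus $2(2m+3)=2k+4$, as needed.
\begin{enumerate}
\item Write $a=2b$; then $b\le m$, since $a\le k-1$.
\item Because $a$ is even, every odd-indexed variable has linear coefficient $+1$ in the exponent. So $s_{2i+1}$ contributes $Q^{\binom{s_{2i+1}+1}{2}}$. The variable $s_{2i}$ contributes $Q^{\binom{s_{2i}}{2}}$ if $i\le b$ and $Q^{\binom{s_{2i}+1}{2}}$ if $i>b$.
\item Summing over $s_1\ge s_2$ by Euler's identity gives $Q^{\binom{s_2+1}{2}}(-Q^{s_2+1};Q)_\infty$.
\item For $1\le i\le m$, sum $s_{2i+1}$ over $s_{2i+2}\le s_{2i+1}\le s_{2i}$, with $s_{k+1}=0$, using the finite $q$-binomial theorem. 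This gives $Q^{\binom{s_{2i+2}+1}{2}}(-Q^{s_{2i+2}+1};Q)_{s_{2i}-s_{2i+2}}/(Q;Q)_{s_{2i}-s_{2i+2}}$.
\item The $(-Q;Q)$ factors telescope to $(-Q;Q)_\infty$. Set $N_i=s_{2i}$. Each $N_i$ gets exponent $\binom{N_i}{2}+\binom{N_i+1}{2}=N_i^2$ or $2\binom{N_i+1}{2}=N_i^2+N_i$, according as $i\le b$ or $i>b$.
\end{enumerate}
The multisum therefore equals
\[
(-q^2;q^2)_\infty\sum_{N_1\ge\dots\ge N_m\ge 0}\frac{q^{2(N_1^2+\dots+N_m^2+N_{b+1}+\dots+N_m)}}{(q^2;q^2)_{N_1-N_2}\cdots(q^2;q^2)_{N_{m-1}-N_m}(q^2;q^2)_{N_m}}.
\]
By \eqref{eq:AG} with $(k,r,q)\mapsto(m,m-b,q^2)$, this is exactly $(-q^2;q^2)_\infty(q^{a+2},q^{2k+2-a},q^{2k+4};q^{2k+4})_\infty/(q^2;q^2)_\infty$. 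For $k=1$ only the Euler step occurs. With this step in place, the product side is proved.
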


Note that for \( k \geq a \geq 1 \) and \( a \) odd,
\( \overline{\mathcal{W}}_{k,a} = \overline{\mathcal{W}}_{k,a-1} \).
The cases where \( k \equiv a \pmod{2} \) thus follow immediately from
Theorems \ref{thm:3} and \ref{thm:4}. That is, if \( a \) and \( k \)
are even, then
    \begin{multline*}
     \sum_{f \in \overline{\mathcal{W}}_{k,a}} q^{|f|}
      = \sum_{f \in \overline{\mathcal{W}}_{k,a+1}} q^{|f|}
      = \sum_{s_1 \ge \cdots \ge s_{k} \geq 0}
      \frac{q^{s_1^2 + \cdots + s_{k}^2 + (s_1 - s_2 + s_3 - \cdots + s_{a-1} - s_{a}) + (s_{a+1} + \cdots + s_{k})}}
      {(q^2;q^2)_{s_1 - s_2} \cdots (q^2;q^2)_{s_{k-1} - s_{k}}(q^2;q^2)_{s_{k}}}\\
      = \frac{(-q^2;q^2)_\infty(q^{a+2}, q^{2k+2-a}, q^{2k+4}; q^{2k+4})_\infty}{(q^2;q^2)_\infty}.
    \end{multline*}
    If \( a \) and \( k \) are odd, then
    \begin{multline*}
  \sum_{f \in \overline{\mathcal{W}}_{k,a}} q^{|f|}
      = \sum_{f \in \overline{\mathcal{W}}_{k,a-1}} q^{|f|}
      = \sum_{s_1 \ge \cdots \ge s_{k} \geq 0}
      \frac{q^{s_1^2 + \cdots + s_{k}^2 + (s_1 - s_2 + s_3 - \cdots + s_{a-2} - s_{a-1}) + (s_{a} + \cdots + s_{k})}}
      {(q^2;q^2)_{s_1 - s_2} \cdots (q^2;q^2)_{s_{k-1} - s_{k}}(q^2;q^2)_{s_{k}}}\\
      = \frac{(-q^2;q^2)_\infty(q^{a+1}, q^{2k+3-a}, q^{2k+4}; q^{2k+4})_\infty}{(q^2;q^2)_\infty}.
    \end{multline*}

Thus all the above results can be unified as follows.
\begin{thm}[Unification of Theorems~\ref{thm:1}--\ref{thm:4}]
\label{thm:unif}
For all integers $k \geq a \geq 0$,

\begin{multline}\label{eq:eq:W}
 \sum_{f \in\mathcal{W}_{k,a}} q^{|f|}
  =\sum_{s_1 \ge \cdots \ge s_{k} \geq 0}
  \frac{q^{s_1^2 + \cdots + s_{k}^2 + 2s_{a+1} + 2s_{a+3} + \cdots + (1-(-1)^{k+a})s_{k}}}
  {(q^2;q^2)_{s_1 - s_2} \cdots (q^2;q^2)_{s_{k-1} - s_{k}}(q^2;q^2)_{s_{k}}}\\
  = \frac{(-q^3;q^2)_\infty}{(q^2;q^2)_\infty}
  \left((q^{k-2 \lfloor \frac{k-a}{2} \rfloor +1}, q^{k+2 \lfloor \frac{k-a}{2} \rfloor +3}, q^{2k+4}; q^{2k+4})_\infty + q (q^{k-2 \lfloor \frac{k-a+1}{2} \rfloor +1}, q^{k+2 \lfloor \frac{k-a+1}{2} \rfloor +3}, q^{2k+4}; q^{2k+4})_\infty\right),
\end{multline}
and
\begin{multline}\label{eq:W_bar}
\sum_{f \in \overline{\mathcal{W}}_{k,a}} q^{|f|}
  = \sum_{s_1 \ge \cdots \ge s_{k} \geq 0}
  \frac{q^{s_1^2 + \cdots + s_{k}^2 + (s_1 - s_2 + s_3 - \cdots + (-1)^{a+1} s_{a}) + (s_{a+1} + \cdots + s_{k})}}
  {(q^2;q^2)_{s_1 - s_2} \cdots (q^2;q^2)_{s_{k-1} - s_{k}}(q^2;q^2)_{s_{k}}}\\
  = \frac{(-q^2;q^2)_\infty(q^{2 \lfloor \frac{a+2}{2} \rfloor}, q^{2k+4-2 \lfloor \frac{a+2}{2} \rfloor}, q^{2k+4}; q^{2k+4})_\infty}{(q^2;q^2)_\infty}.
\end{multline}
\end{thm}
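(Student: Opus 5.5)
This is a unification statement, so I would not reprove any identity. I would check, case by case on the parities of $k$ and $a$, that \eqref{eq:eq:W} and \eqref{eq:W_bar} specialise exactly to Theorems~\ref{thm:1}--\ref{thm:4}, together with the two displayed consequences for $\overline{\mathcal{W}}_{k,a}$ with $k\equiv a \pmod 2$.

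\textbf{The sum \eqref{eq:eq:W}.} The linear term $2s_{a+1}+2s_{a+3}+\cdots$ runs over indices of the same parity as $a+1$, up to $k$.
\begin{itemize}
\item If $k\equiv a \pmod 2$, the last index is $k-1$. The coefficient $(1-(-1)^{k+a})$ of $s_k$ is $0$, and we recover Theorem~\ref{thm:1}.
\item If $k\not\equiv a$, the last index is $k$ with coefficient $2=1-(-1)^{k+a}$, which is Theorem~\ref{thm:2}.
\end{itemize}

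\textbf{The product in \eqref{eq:eq:W}.} If $k-a=2m+1$ is odd, then $\lfloor\frac{k-a}{2}\rfloor=m$ and $\lfloor\frac{k-a+1}{2}\rfloor=m+1$. The exponents become $k-2m+1=a+2$, $k+2m+3=2k+2-a$, and $k-2m-1=a$, $2k+4-a$. This is exactly Theorem~\ref{thm:2}.

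If $k-a=2m$ is even, both floors equal $m$, so both triple products are $(q^{a+1},q^{2k+3-a},q^{2k+4};q^{2k+4})_\infty$. The expression becomes
\[
\frac{(-q^3;q^2)_\infty(1+q)}{(q^2;q^2)_\infty}(q^{a+1},q^{2k+3-a},q^{2k+4};q^{2k+4})_\infty .
\]
Since $(1+q)(-q^3;q^2)_\infty=(-q;q^2)_\infty$, this matches Theorem~\ref{thm:1}. This observation is the only non-bookkeeping point.

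\textbf{The sum \eqref{eq:W_bar}.} Here $\lfloor\frac{a+2}{2}\rfloor$ is $\frac{a+2}{2}$ for $a$ even and $\frac{a+1}{2}$ for $a$ odd. The product therefore has exponents $a+2$ and $2k+2-a$, or $a+1$ and $2k+3-a$, respectively.
\begin{itemize}
\item $k$ odd, $a$ even: Theorem~\ref{thm:4}. The alternating part ends with $-s_a$ and the sign $(-1)^{a+1}=-1$ agrees.
\item $k$ even, $a$ odd: Theorem~\ref{thm:3}. The alternating part $s_1-s_2+\cdots-s_{a-1}+s_a$ plus $s_{a+1}+\cdots+s_k$ equals Andrews' exponent $(s_1-\cdots-s_{a-1})+(s_a+\cdots+s_k)$.
\item $k\equiv a \pmod 2$: the two displayed consequences. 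They rest on $\overline{\mathcal{W}}_{k,a}=\overline{\mathcal{W}}_{k,a\pm1}$, since $f_1$ must be even when odd parts occur an even number of times.
\end{itemize}
In the last case I must also check that the sum sides agree. For $a$ even, the displayed sum side for $\overline{\mathcal{W}}_{k,a}$ is literally \eqref{eq:W_bar}. For $a$ odd (with $k$ odd), the displayed sum uses $a-1$, giving $(s_1-\cdots-s_{a-1})+(s_a+\cdots+s_k)$, and this is the same rearrangement as in the Theorem~\ref{thm:3} case. The product exponents also match: $2\lfloor\frac{a+2}{2}\rfloor$ equals $a+2$ for $a$ even and $a+1$ for $a$ odd, as displayed.

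There is no genuine obstacle. The only places needing care are the floor bookkeeping and the factor $(1+q)$ merging into $(-q;q^2)_\infty$.
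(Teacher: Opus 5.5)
Your proposal is correct and matches the paper's own justification: the paper obtains Theorem~\ref{thm:unif} by the same parity case split, using $\overline{\mathcal{W}}_{k,a}=\overline{\mathcal{W}}_{k,a\pm1}$ when $k\equiv a \pmod 2$, and your floor bookkeeping together with $(1+q)(-q^3;q^2)_\infty=(-q;q^2)_\infty$ is exactly what is needed to reach the unified product forms. One boundary point is inherited from the paper: when $k=a$ is even, writing $\overline{\mathcal{W}}_{k,k}=\overline{\mathcal{W}}_{k,k+1}$ invokes Theorem~\ref{thm:3} with $a=k+1$, which lies outside its stated range $k\geq a$, so that case (for instance $k=a=0$ or $k=a=2$) requires Andrews' result in a slightly extended form or a separate check.
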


Our main purpose in this paper is to prove the following identities,
which generalise Theorem \ref{thm:unif} in the same way that Stanton's
\Cref{thm:Sta3.2} generalises the Andrews--Gordon identities. We
achieve this using particle motion.

\begin{thm}\label{thm:main}
For non-negative integers \( j,r \), and \( k \) with
  \( j+r \leq k \), let \( \mathcal{Z}^o_{j,r,k} \) be the set
  of all frequency sequences \( (f_i)_{i \in \Z} \) such that
  \begin{itemize}
  \item \(f_i + f_{i+1} \leq k \) for all  \( i \geq 0 \),
  \item \(f_i=0\) for all \(i<0\),
  \item \( f_0 \leq j \) and \( 2f_0 + f_1 \leq k-r+j \), 
  \item \( f_i \) is even if \( i \) is odd. That is, odd parts appear an even number of times.
  \end{itemize}
  Then
  \begin{multline*}
   \sum_{ f \in \mathcal{Z}^o_{j,r,k} } q^{|f|} = \sum_{s_1 \geq \cdots \geq s_k \geq 0}\frac{q^{s_1^2 + \cdots + s_k^2 - (s_1 + \cdots + s_j) + (s_{j+1} - s_{j+2} + s_{j+3} - \cdots \pm s_{k-r}) + (s_{k-r+1} + \cdots + s_k)}}{(q^2;q^2)_{s_1 - s_2} \cdots (q^2;q^2)_{s_{k-1} - s_k} (q^2;q^2)_{s_k}} \\
    = \sum_{s=0}^j  \frac{(-q^2;q^2)_\infty(q^{ 2 \lfloor \frac{k+2-r+j-2s}{2} \rfloor}, q^{2k+4 - 2 \lfloor \frac{k+2-r+j-2s}{2} \rfloor} , q^{2k+4}; q^{2k+4})_\infty}{(q^2;q^2)_\infty}.
  \end{multline*}
\end{thm}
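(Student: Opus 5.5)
The theorem asserts two equalities, and I will prove them separately: the combinatorial one (frequency sequences equal multisum) and the analytic one (multisum equals sum of products).

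\textbf{First equality: particle motion.} I would follow the framework of Section~\ref{sec:pm} (Warnaar's particle motion extended to generalised frequency sequences, as in \cite{DJK2024,DJJ25}).

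First, I identify the minimal configurations. For each $s_1\ge\dots\ge s_k\ge 0$, consider the $k$ ``rows'' of particles with $s_i-s_{i+1}$ particles at level $i$, i.e.\ $s_i$ particles in row $i$. The minimal element of $\mathcal{Z}^o_{j,r,k}$ with this profile is built as follows:
\begin{itemize}
\item the first $j$ rows start at part $0$, which gives the $-(s_1+\dots+s_j)$ correction to $\sum s_i^2$;
\item the rows $j+1,\dots,k-r$ are grouped in pairs so that odd parts appear an even number of times, which gives the alternating term $s_{j+1}-s_{j+2}+\cdots$;
\item the last $r$ rows start one step higher, giving $s_{k-r+1}+\dots+s_k$.
\end{itemize}

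Second, I define the moves. Particles move by steps of $2$ (forward moves that preserve parity of frequencies). Pairs of particles sitting on an odd site move together, and an isolated particle sits only on an even site. This makes the generating function of admissible moves for each group equal to $1/(q^2;q^2)_{s_i-s_{i+1}}$.

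The bijectivity argument (moves are invertible, and every $f\in\mathcal{Z}^o_{j,r,k}$ reduces uniquely to a minimal configuration) should mirror the $j=0$ case giving \eqref{eq:W_bar}, and the $r$-shift from \cite{DJJ25}. The conditions $f_0\le j$ and $2f_0+f_1\le k-r+j$ must be checked to be exactly preserved by the moves.

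\textbf{Second equality: reduction to Theorem~\ref{thm:unif}.} For this I would imitate the proof of Theorem~\ref{thm:Sta3.2} in \cite{DJJ25}, and I expect this is the main obstacle: finding the right decomposition. I would split $\mathcal{Z}^o_{j,r,k}$ according to the value $f_0=t\in\{0,\dots,j\}$ and remove the zeros. A sequence with $f_0=t$ then becomes a sequence on $i\ge1$ satisfying $f_1\le \min(k-t,\,k-r+j-2t)$, together with the parity condition. After the shift $i\mapsto i-1$ the parity condition swaps odd and even, so the pieces are counted by generating functions of the type $\mathcal{W}_{k,a}$ or $\overline{\mathcal{W}}_{k,a}$.

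Parts of size $0$ contribute no weight, so the shift introduces only factors $q^{|f|-\ell(f)}$. Because of these factors, the cleaner route is a telescoping identity rather than a direct substitution. Let $F_{j,r}$ denote the left-hand side. I would aim for a relation
\[F_{j,r}-F_{j-1,r+1}=P_{k-r+j},\]
where $P_{a'}$ is the single product appearing on the right-hand side with parameter $k+2-r+j$. Combinatorially, the difference $F_{j,r}-F_{j-1,r+1}$ counts sequences with $f_0=j$ exactly, or with $2f_0+f_1$ attaining its maximum.

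I would try to establish this relation at the level of multisums: the exponents for $(j,r)$ and $(j-1,r+1)$ differ only in the terms involving $s_j$ and the alternating block. The difference should then reduce, via the standard substitution $s_i\mapsto s_i$ together with the $q$-binomial recursion $1/(q^2;q^2)_n$ splitting, to the multisum of \eqref{eq:W_bar} with $a=k-r+j$. Theorem~\ref{thm:unif} then identifies that multisum with the required product.

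Iterating from $j=0$, where the statement is \eqref{eq:W_bar} with $a=k-r$, gives the sum over $s$. Here the floors $2\lfloor (k+2-r+j-2s)/2\rfloor$ match $2\lfloor (a+2)/2\rfloor$ for $a=k-r+j-2s$, consistent with the product in \eqref{eq:W_bar}. If the telescoping fails to close, the fallback is to work directly with particle motion on sequences with exactly $f_0=j$ to produce the extra $\overline{\mathcal W}$-type piece.
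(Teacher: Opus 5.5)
\textbf{First equality.} Your heuristic points the right way: an even number of moves per particle is what gives the factors $1/(q^2;q^2)_{s_i-s_{i+1}}$. But the mechanism you describe is not a proof.
\begin{itemize}
\item Particles moving by steps of $2$, with pairs on odd sites moving together, is never defined precisely, and it is not how particle motion acts. Single moves shift one part by one step, intermediate states violate the parity condition, and parity is only restored after an even number of moves.
\item You explicitly defer both bijectivity and the preservation of $f_0\le j$, $2f_0+f_1\le k-r+j$.
\end{itemize}
The latter need not be redone. These two conditions say exactly $f_0\le j-\max\{f_0+f_1-(k-r),0\}$, so $\mathcal Z^o_{j,r,k}$ is the parity subset of $\mathcal Z_{j,r,k,0}$, and $\Lambda:\mathcal X_{j,r,k,0}\to\mathcal Z_{j,r,k,0}$ is already a bijection (Proposition~\ref{pro:bij_XZ}).

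What is missing is the parity lemma that lets you restrict $\Lambda$. When $\ppm^{(\lambda_i)}_{2i}$ is applied to a pair $(h,0)$, every entry other than the final focus pair at $(v,v+1)$ is displaced by $0$ or $2$ (Proposition~\ref{pro:explicit_pm}). Comparing weights modulo $2$ then gives $v\,\theta^{(i)}_v+(v+1)\,\theta^{(i)}_{v+1}\equiv\lambda_i\pmod 2$. Hence odd parts keep even multiplicities if and only if $\lambda_i$ is even, and the same count along reverse particle motion gives the converse. Restricting $\Lambda$ to tuples of partitions into even parts at least $m-j+\max\{m-(k-r),0\}$ yields the bijection. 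Rounding these lower bounds up to even numbers makes rows $j+1,j+2$, then $j+3,j+4,\dots$ share a bound; this is the precise form of your ``pairing'', and it produces the alternating block.

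\textbf{Second equality.} Here the gap is more serious: you abandon the right decomposition for a spurious reason. Splitting by $f_0=t$ and deleting the zeros is weight-preserving and lands directly in $\overline{\mathcal W}_{k,a_t}$, with $a_t=\min(k-t,\,k-r+j-2t)$. No shift $i\mapsto i-1$ is needed, because $\overline{\mathcal W}_{k,a}$ already consists of sequences supported on $i\ge 1$ whose odd parts have even multiplicity. So no factor $q^{|f|-\ell(f)}$ ever appears. Thus $\sum_{f\in\mathcal Z^o_{j,r,k}}q^{|f|}=\sum_{t=0}^{j}\sum_{f\in\overline{\mathcal W}_{k,a_t}}q^{|f|}$, and \eqref{eq:W_bar} applies term by term. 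The paper does exactly this, via $\mathcal Y^o_{j,r,k}$ and the map $\phi$.

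The remaining content is a reindexing. For $j\le r$ it is immediate. For $j>r$, the summands of the target with $k-r+j-2s>k$ are not the generating function of any $\overline{\mathcal W}_{k,a}$. They must be matched with the terms $t\in\{j-r-1,j-r-3,\dots\}$, where $a_t=k-t$. This uses the invariance of $(q^{x},q^{2k+4-x},q^{2k+4};q^{2k+4})_\infty$ under $x\mapsto 2k+4-x$, together with the floor identity $2k+4-2\lfloor\frac{k+2-r+j-2s}{2}\rfloor=2\lfloor\frac{k+2-(j-r-1-2s)}{2}\rfloor$.

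Your telescoping substitute does not provide this.
\begin{itemize}
\item The relation $F_{j,r}-F_{j-1,r+1}=P_{k-r+j}$ holds a posteriori, but invoking ``the $q$-binomial recursion'' together with the identity substitution $s_i\mapsto s_i$ does not prove it.
\item The claimed endpoint, ``the multisum of \eqref{eq:W_bar} with $a=k-r+j$'', does not exist when $j>r$, since \eqref{eq:W_bar} requires $a\le k$.
\item Reading it as $\overline{\mathcal W}_{k,k}$ gives the wrong product. For $k=j=2$, $r=0$ the needed term is $(q^{6},q^{2},q^{8};q^{8})_\infty$, which is the product for $\overline{\mathcal W}_{2,0}$, not for $\overline{\mathcal W}_{2,2}$.
\end{itemize}
The missing step is precisely this reflection.
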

When \(j=0\), Theorem \ref{thm:main} reduces to \eqref{eq:W_bar} after setting \(r=k-a\).
  
  \begin{thm}\label{thm:even1}
For non-negative integers \( a,b \), and \( k \) with
  \( 2a + 2b \leq k \), let \( \mathcal{Z}^e_{a,b,k} \) be the set
  of all frequency sequences \( (f_i)_{i \in \Z} \) such that
  \begin{itemize}
  \item  \( f_i + f_{i+1} \leq k \) for all  \( i \geq 0 \),
  \item \(f_i=0\) for all \(i<0\),
  \item \( f_0 \leq 2a \) and \( 2f_0 + f_1 \leq k-2b+2a \), 
  \item \( f_i \) is even if \( i \) is even. That is, even parts appear an even number of times. 
  \end{itemize} 
Then
  \begin{multline*}
 \sum_{ f \in \mathcal{Z}^e_{a,b,k} } q^{|f|} =   \sum_{ s_1 \geq \dots \geq s_k \geq 0 } \frac{q^{s_1^2 + \cdots + s_k^2 -2(s_2 + s_4 + \cdots + s_{2a}) +2(s_{k-2b+1} + s_{k-2b+3} + \cdots + s_{k-1})}}{(q^2;q^2)_{s_1 - s_2} \cdots (q^2;q^2)_{s_{k-1} - s_k}(q^2;q^2)_{s_k}} \\
   = \sum_{ s = 0 }^{ a } \frac{(-q;q^2)_\infty (q^{k+1+2a-2b-4s}, q^{k+3-2a+2b+4s}, q^{2k+4}; q^{2k+4})_\infty}{(q^2;q^2)_\infty}.
  \end{multline*}
\end{thm}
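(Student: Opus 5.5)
Both equalities in Theorem~\ref{thm:even1} can be handled separately: the first (multisum equals the generating function of $\mathcal{Z}^e_{a,b,k}$) by particle motion, the second (multisum equals the sum of products) by reduction to Theorem~\ref{thm:unif} through a combinatorial decomposition in $f_0$.

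\textbf{Sum side via particle motion.} For each $(s_1,\dots,s_k)$ with $s_1\ge\dots\ge s_k\ge0$, I will set $n_i=s_i-s_{i+1}$, the number of particles of ``height'' $i$, and build a minimal configuration in $\mathcal{Z}^e_{a,b,k}$. Heights $i\le 2a$ with $i$ even will be placed as a column of $i$ parts equal to $0$ where possible, which accounts for the $-2(s_2+s_4+\dots+s_{2a})$ shift. Heights in the range $k-2b<i$ will be shifted so as to avoid the forbidden pattern at $f_0,f_1$, giving the $+2s_{k-2b+1}+\dots$ terms. Particles then move by steps of $2$, so that the parity of each $f_i$ is preserved; this is why every factor is $(q^2;q^2)$. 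The crossing rules of Section~\ref{sec:pm} (particles of height $i$ moving past those of height $i'$) yield the quadratic form $\sum s_i^2$ exactly as in the Andrews case \eqref{eq:eq:W}. The only new checks are local: the constraints $f_0\le 2a$ and $2f_0+f_1\le k-2b+2a$ together with $f_{-1}=0$ must match the minimal configurations. I will verify this by the same case analysis as for $\mathcal{Z}^o$ in Theorem~\ref{thm:main}.

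\textbf{Product side.} The cleanest route is combinatorial. Condition on $f_0=2t$ with $0\le t\le a$, and remove the zeros; the zeros contribute nothing to the weight. What remains is a frequency sequence on $i\ge1$ satisfying $f_1\le \min(k-2t,\;k-2b+2a-4t)$, in which even parts appear an even number of times. This is exactly an element of $\mathcal{W}_{k,c}$ for an explicit $c=c(t)$. Differences between consecutive $t$ telescope, so one expects
\[
\sum_{f\in\mathcal{Z}^e_{a,b,k}}q^{|f|}=\sum_{t}\Big(\text{gen. function of }\mathcal{W}_{k,c(t)}\Big)
\]
after rewriting the constraint set as a disjoint union indexed by $s$. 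I will then apply \eqref{eq:eq:W} to each piece. Each piece gives two products with prefactor $(-q^3;q^2)_\infty$, with exponents $k+1\mp 2\lfloor\cdot\rfloor$; summing over pieces, adjacent $q$-terms pair up.

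\textbf{Main obstacle.} The step I expect to be hardest is this pairing. It requires
\[
(1+q)(-q^3;q^2)_\infty=(-q;q^2)_\infty ,
\]
together with the observation that the ``$+q$'' term of piece $s$ and the main term of piece $s+1$ involve the same theta product. If the pairing does not close up at the ends $s=0$ and $s=a$, I will fall back on an analytic argument. That argument would use Stanton-style manipulation: convert the multisum with the $-2(s_2+\dots+s_{2a})$ shift into a sum over $j$ of multisums of the form \eqref{eq:eq:W}, via the $q^2$-Pascal recurrence applied $a$ times, mirroring the proof of Theorem~\ref{thm:Sta3.2} in \cite{DJJ25}.
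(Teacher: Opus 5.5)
Your product side is essentially the paper's argument. The sum side, however, has a genuine gap. What needs proof there is that particle motion from a suitable frame produces exactly the sequences in which even parts, including the part $0$, occur an even number of times. You assert this with a mechanism that does not work as stated:

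\begin{itemize}
\item Particle motion moves one particle by one position, so parity is not preserved move by move.
\item Suppose you work in the $0$-frame $\fs_0(\bla)$, as your ``columns of zeros'' and your appeal to the $\mathcal{Z}^o$ argument suggest. Then a pair of height $m$ must make a number of moves $\equiv m \pmod 2$. Already for $k=1$, $a=b=0$, every height-$1$ pair must make an odd number of moves, so ``steps of $2$'' is false there.
\item Copying the $\mathcal{Z}^o$ argument verbatim yields the condition on odd parts, not even ones.
\item Only the leftmost pair can leave zeros. So zeros cannot account for a shift linear in $s_2+s_4+\dots+s_{2a}$.
\end{itemize}

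The missing idea is to start the frame at the odd index $-1$. Take $(\bla,\fs_{-1}(\bla))$ with all parts of $\lambda^{(m)}$ even and at least $m+\max\{m-2a,0\}+\max\{m-(k-2b),0\}$.

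\begin{itemize}
\item By Propositions~\ref{pro:bij_XZ} and~\ref{lem:shift_invar} with $j=2a$, $r=2b$, $\Lambda$ maps this set, without the evenness condition, bijectively onto $\mathcal{Z}_{2a,2b,k,0}$. That set is cut out exactly by $f_0\le 2a$ and $2f_0+f_1\le k-2b+2a$. This replaces your unspecified boundary case analysis.
\item Lemma~\ref{lem:shifted_pm_parity} then gives an equivalence at each step $\theta^{(i+1)}\to\theta^{(i)}$: even parts of $\theta^{(i)}$ occur an even number of times if and only if those of $\theta^{(i+1)}$ do and $\lambda_i$ is even. Its proof uses the mod-$2$ weight count \eqref{eq:parity_eq}, together with the fact that entries off the moving pair stay put or shift by $2$.
\item The converse direction of that equivalence is what gives surjectivity onto $\mathcal{Z}^e_{a,b,k}$, which you never address.
\item The exponent is $|\fs_{-1}|=\sum s_i^2-2\sum s_i$ plus the minimal even parts. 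This is a direct computation, not ``crossing rules as in \eqref{eq:eq:W}''; the paper does not prove \eqref{eq:eq:W} by particle motion.
\end{itemize}

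On the product side, conditioning on $f_0=2t$ and identifying the remainder with $\mathcal{W}_{k,c(t)}$, where $c(t)=\min(k-2t,\,k-2b+2a-4t)$, is correct. These pieces coincide with the paper's decomposition of $\mathcal{Y}^e_{a,b,k}$ transported by $\phi$.

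Your stated ``main obstacle'' is misdiagnosed, though. Since $c(t)\equiv k\pmod 2$, the two terms of \eqref{eq:eq:W} for a single piece carry the same theta product. So each piece is directly Theorem~\ref{thm:1}, and nothing telescopes. Pairing the $q$-term of piece $t$ with the main term of piece $t+1$ would actually fail, since $c(t)\neq c(t+1)$.

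What you do need, and do not mention, is the reindexing when $a>b$. For $t<a-b$ the exponents are $k+1-2t$. These must be matched with the targets $k+1+2a-2b-4s$, $0\le s<a-b$, using the invariance of $(q^x,q^{2k+4-x},q^{2k+4};q^{2k+4})_\infty$ under $x\mapsto 2k+4-x$. This is the $F_1\sqcup F_2$ split in Proposition~\ref{pro:2}.
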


\begin{thm}\label{thm:even2}
For non-negative integers \( a,b \), and \( k \) with
  \( 2a + 2b -1 \leq k \), let
  \( \widetilde{\mathcal{Z}}^e_{a,b,k} \)  be the set of all
  frequency sequences \( (f_i)_{i \in \Z} \) such that
  \begin{itemize}
  \item \( f_i + f_{i+1} \leq k \) for all \( i \geq 0 \),
  \item \(f_i=0\) for all \(i<0\),
  \item \( f_0 \leq 2a \) and \( 2f_0 + f_1 \leq k-2b+2a+1 \), 
  \item \( f_i \) is even if \( i \) is even. That is, even parts appear an even number of times.
  \end{itemize}
Then
  \begin{multline*}
   \sum_{f \in \widetilde{\mathcal{Z}}_{a,b,k}^e}q^{|f|}=  \sum_{ s_1 \geq \dots \geq s_k \geq 0 } \frac{q^{s_1^2 + \cdots + s_k^2 -2(s_2 + s_4 + \cdots + s_{2a}) +2(s_{k-2b+2} + s_{k-2b+4} + \cdots + s_{k})}}{(q^2;q^2)_{s_1 - s_2} \cdots (q^2;q^2)_{s_{k-1} - s_k}(q^2;q^2)_{s_k}} \\
   = \frac{(-q^3;q^2)_\infty}{(q^2;q^2)_\infty}\sum_{ s = 0 }^{ a } \Bigg[ (q^{k+3+2a-2b-4s}, q^{k+1-2a+2b+4s}, q^{2k+4}; q^{2k+4})_\infty + q (q^{k+1+2a-2b-4s}, q^{k+3-2a+2b+4s}, q^{2k+4}; q^{2k+4})_\infty \Bigg].
  \end{multline*}
\end{thm}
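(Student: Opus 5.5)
We prove Theorem~\ref{thm:even2} in two independent parts: the first equality is combinatorial, via particle motion, and the second is analytic, via reduction to Theorem~\ref{thm:unif}.

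\textbf{Step 1: combinatorial equality.} We run the generalised particle motion of Section~\ref{sec:pm} on frequency sequences indexed by $\Z$, with parts of size $0$ allowed.
\begin{enumerate}
\item[(a)] For each $s_1\ge\dots\ge s_k\ge 0$, set $m_i=s_i-s_{i+1}$. These $m_i$ count the particles of ``height'' $i$.
\item[(b)] Build the minimal configuration satisfying $f_i+f_{i+1}\le k$ and $f_0\le 2a$, with even parts occurring an even number of times. To respect the parity, particles of odd height are placed as pairs, moving in steps of $2$; this accounts for the denominators $(q^2;q^2)_{m_i}$.
\item[(c)] Check that the weight of this minimal configuration is the exponent $s_1^2+\dots+s_k^2-2(s_2+s_4+\dots+s_{2a})+2(s_{k-2b+2}+s_{k-2b+4}+\dots+s_k)$. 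The negative correction comes from the first $2a$ rows, which may sit at part $0$ (with $f_0\le 2a$). The positive correction comes from the rows that the condition $2f_0+f_1\le k-2b+2a+1$ forbids from starting at $0$ or $1$.
\item[(d)] Show that forward and backward motion of particles is well defined and weight-additive. This gives a bijection between $\widetilde{\mathcal{Z}}^e_{a,b,k}$ and the data $(s,\text{moves})$.
\end{enumerate}
This is the same argument as for Theorems~\ref{thm:main} and~\ref{thm:even1}, with $b$ shifted by a half-step. The point needing care is the new boundary condition $2f_0+f_1\le k-2b+2a+1$: we must verify that the motion never creates a violation at $i=0,1$, and that the minimal configurations are exactly the ones described.

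\textbf{Step 2: sum of products.} We induct on $a$, using a decomposition according to $f_0$.
\begin{enumerate}
\item[(a)] Base case $a=0$. Here $f_0=0$ and $f_1\le k-2b+1$. After shifting indices by one, this is $\mathcal{W}_{k,a'}$ with $a'=k-2b+1$, so \eqref{eq:eq:W} applies. Since $k-a'$ is odd, the two floors in \eqref{eq:eq:W} give exactly the two products with $s=0$.
\item[(b)] Induction step. Partition $\widetilde{\mathcal{Z}}^e_{a,b,k}$ by whether $f_0\le 2a-2$ or $f_0\in\{2a-1,2a\}$; since $f_0$ is even, only $f_0=2a$ occurs in the second case. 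Then, following Stanton's approach in Theorem~\ref{thm:Sta3.2} as reproved in \cite{DJJ25}, show that
\[
\sum_{\widetilde{\mathcal{Z}}^e_{a,b,k}} q^{|f|}-\sum_{\widetilde{\mathcal{Z}}^e_{a-1,b-1,k}} q^{|f|}
\]
is the generating function of a set equivalent to the $a=0$ case with modified $b$.
\item[(c)] Each such step should contribute one new term, the one indexed by $s=a$, of the form $(q^{k+3-2a-2b'},\dots)+q(\dots)$.
\end{enumerate}
An alternative for Step 2 is to work on the multisum directly. Apply the key lemma used for Stanton's identity, expressing the difference between the sums with the linear term $-s_j$ included and excluded as a shifted sum; here it would be applied to the terms $-2s_{2i}$. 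Then telescope down to the $j=0$ case of \eqref{eq:eq:W}.

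I expect Step 2 to be the main obstacle, specifically matching the boundary conditions in the telescoping. The shift by $-2s_{2a}$ must turn the triple $(a,b,k)$ into a triple of the same family with the right product. The $q\cdot(\cdots)$ terms and the $(-q^3;q^2)_\infty$ prefactor, which comes from the parity of part $1$ versus part $0$, must also align with \eqref{eq:eq:W}. To handle this, I would first check $k=1,2$ numerically to fix the precise recurrence.
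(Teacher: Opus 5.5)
Your Step 1 follows the paper's route in outline: the paper reuses the proof of \Cref{prop:bij_XZ_even} with $r=2b-1$ in place of $r=2b$, which is your ``half-step''. However, the parity mechanism you describe is not the one that works, because nothing is paired. The working mechanism is as follows.
One starts from the $(-1)$-frame sequence $\fs_{-1}(\bla)$, so every block $(h,0)$ starts on an (odd, even) pair of positions.
Each $\lambda^{(m)}$ has even parts at least $m+\max\{m-2a,0\}+\max\{m-(k-2b+1),0\}$. Together with $|\fs_{-1}|=\sum s_i^2-2\sum s_i$, this produces the exponent and the $(q^2;q^2)$ denominators.
\Cref{lem:shifted_pm_parity} shows that a block makes an even number of particle motions if and only if even parts keep even multiplicity. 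This is a weight-mod-$2$ argument using \Cref{pro:explicit_pm}: all other entries move by $0$ or $2$.
The boundary verification you postpone is exactly \Cref{pro:bij_XZ} with $j=2a$, $r=2b-1$, transported by \Cref{lem:shift_invar}.

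The genuine gap is in Step 2. Your recursion is correct when $b\ge1$. Since $k-2(b-1)+2(a-1)+1=k-2b+2a+1$, the set $\widetilde{\mathcal Z}^e_{a-1,b-1,k}$ is exactly the part of $\widetilde{\mathcal Z}^e_{a,b,k}$ with $f_0\le 2a-2$. The slice $f_0=2a$ (where $f_1\le k-2a-2b+1$) is $\mathcal W_{k,k-2a-2b+1}$, and its product from \eqref{eq:eq:W} is precisely the $s=a$ summand.

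The problem is that the induction hypothesis at $(a-1,b-1)$ requires $b\ge1$. So when $a>b$ the recursion bottoms out at $(a-b,0)$, not at $a=0$, and there your claim (c) is false. For $b=0$ the condition $2f_0+f_1\le k+2a+1$ is vacuous, and the slice $f_0=2a$ is $\mathcal W_{k,k-2a}$ with $k-a'$ even. Its generating function $\frac{(-q;q^2)_\infty}{(q^2;q^2)_\infty}(q^{k+1-2a},q^{k+3+2a},q^{2k+4};q^{2k+4})_\infty$ is not the $s=a$ summand. For the same reason your base case breaks at $b=0$, where $a'=k+1$ lies outside \Cref{thm:unif}. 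Also, no index shift is needed in the base case: $\widetilde{\mathcal Z}^e_{0,b,k}$ is literally $\mathcal W_{k,k-2b+1}$.

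The missing case $\widetilde{\mathcal Z}^e_{c,0,k}$, with $c=a-b$, must be handled separately.
Slicing by $f_0=2t$ and using \Cref{thm:1} gives $\frac{(-q;q^2)_\infty}{(q^2;q^2)_\infty}\sum_{t=0}^{c}(q^{k+1-2t},q^{k+3+2t},q^{2k+4};q^{2k+4})_\infty$.
This is matched with the right-hand side by reindexing $s\mapsto c-s$ in the first bracket, which turns each bracket into $(1+q)$ times one product.
Then use the symmetry $x\mapsto 2k+4-x$ of $(q^x,q^{2k+4-x},q^{2k+4};q^{2k+4})_\infty$, as in the proof of \Cref{cor:even}.

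With this added, your Step 2 is a valid alternative to the paper's. The paper avoids slicing altogether. A bijection changing $f_1$ by one gives $(1+q)\sum_{f\in\widetilde{\mathcal Z}^e_{a,b,k}}q^{|f|}=\sum_{f\in\mathcal Z^e_{a,b-1,k}}q^{|f|}+q\sum_{f\in\mathcal Z^e_{a,b,k}}q^{|f|}$ (\Cref{prop:endtheven2}). The product is then read off from \Cref{thm:even1}, whose proof already contains the $a>b$ matching (\Cref{pro:2}). Your version needs only \eqref{eq:eq:W} and \Cref{thm:1}, at the cost of redoing that matching.
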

When \(a=0\), Theorems \ref{thm:even1} and \ref{thm:even2} reduce to Theorems \ref{thm:1} and \ref{thm:2}, respectively.

Other nice corollaries follow from them as well. Indeed, \Cref{thm:main} implies the following.
\begin{cor}\label{cor:odd}
  For integers \( k \) and \( a \) with \( k \geq 1 \) and
  \( 0 \leq a \leq k \),
  \begin{multline*}
    \sum_{s_1 \ge \cdots \ge s_{k} \geq 0}
    \frac{q^{s_1^2 + \cdots + s_{k}^2 - (s_1 + \cdots + s_a) + (s_{a+1} - s_{a+2} + \cdots +(-1)^{k-a-1} s_{k})}}
    {(q^2;q^2)_{s_1 - s_2} \cdots (q^2;q^2)_{s_{k-1} - s_{k}}(q^2;q^2)_{s_{k}}}\\
    = \sum_{i=0}^a  \frac{(-q^2;q^2)_\infty(q^{ 2 \lfloor \frac{k-i+2}{2} \rfloor}, q^{2k+4 - 2 \lfloor \frac{k-i+2}{2} \rfloor} , q^{2k+4}; q^{2k+4})_\infty}{(q^2;q^2)_\infty}.
  \end{multline*}
\end{cor}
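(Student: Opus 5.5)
The plan is to specialise \Cref{thm:main} to $j=a$ and $r=0$, which is allowed because $a\le k$. The sum side then matches the left-hand side of \Cref{cor:odd} exactly. With $r=0$ the block $(s_{k-r+1}+\cdots+s_k)$ is empty. The alternating block runs from $s_{j+1}=s_{a+1}$ to $s_{k-r}=s_k$, with last sign $(-1)^{k-a-1}$. The block $-(s_1+\cdots+s_a)$ matches as well. So the only work is to identify the product sides.

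\Cref{thm:main} gives $\sum_{s=0}^{a} P(m_s)$, where
\[
P(m):=\frac{(-q^2;q^2)_\infty(q^{2m},q^{2k+4-2m},q^{2k+4};q^{2k+4})_\infty}{(q^2;q^2)_\infty}
\qquad\text{and}\qquad
m_s=\Big\lfloor \tfrac{k+2+a-2s}{2}\Big\rfloor .
\]
The key observation is that $P(m)=P(k+2-m)$, since the triple product is symmetric under $m\mapsto k+2-m$. This also holds in the degenerate cases $m=0$ and $m=k+2$, where the factor $(q^0;q^{2k+4})_\infty$ makes $P$ vanish on both sides.

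Next I reindex by $t=a-2s$, so that $t\in\{-a,-a+2,\dots,a\}$ and $m_s=\lfloor (k+2+t)/2\rfloor$. There are two cases.
\begin{itemize}
\item If $t\le 0$, then $m_s=\lfloor (k+2-|t|)/2\rfloor$, which is already of the form $\lfloor (k+2-i)/2\rfloor$ with $i=|t|$.
\item If $t>0$, I apply the symmetry, replacing $m_s$ by $k+2-m_s=\lceil (k+2-t)/2\rceil=\lfloor (k+2-(t-1))/2\rfloor$. This has the form $\lfloor (k+2-i)/2\rfloor$ with $i=t-1$.
\end{itemize}

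It remains to check that the indices $i$ obtained this way run over $\{0,1,\dots,a\}$, each exactly once. The values $|t|$ for $t\le0$ are exactly the integers in $[0,a]$ with the parity of $a$. The values $t-1$ for $t>0$ are exactly the integers in $[0,a-1]$ with the opposite parity. Their disjoint union is $\{0,\dots,a\}$, so the sum becomes $\sum_{i=0}^{a}P(\lfloor (k+2-i)/2\rfloor)$, which is the right-hand side of \Cref{cor:odd}.

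There is no real obstacle here. The only delicate point is the floor/ceiling bookkeeping in the reflection step, which can be checked separately for each parity of $k+t$.
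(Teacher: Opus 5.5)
Your proof is correct and is essentially the paper's argument: specialise \Cref{thm:main} at $j=a$, $r=0$, reflect half of the product terms using the symmetry $m\mapsto k+2-m$ of the triple product, and match the resulting indices with $\{0,\dots,a\}$ by parity. Your split at $t\le 0$ versus $t>0$ is slightly cleaner than the paper's split at $s\le\lfloor a/2\rfloor$. For even $a$ the paper also reflects the middle term $s=a/2$, which puts the stray value $k+3$ into its set decomposition; this is harmless only because of that same symmetry.
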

And Theorems \ref{thm:even1} and \ref{thm:even2} imply the following.
\begin{cor}\label{cor:even}
  For integers \( k \) and \( a \) with \( k \geq 1 \) and \( 0 \leq 2a \leq k \),
  \[
    \sum_{s_1 \ge \cdots \ge s_{k} \geq 0}
    \frac{q^{s_1^2 + \cdots + s_{k}^2 - 2(s_2 + s_4 + \cdots + s_{2a})}}
    {(q^2;q^2)_{s_1 - s_2} \cdots (q^2;q^2)_{s_{k-1} - s_{k}}(q^2;q^2)_{s_{k}}}
    = \sum_{i=0}^{a} \frac{(-q;q^2)_\infty(q^{k+1-2i}, q^{k+3 +2i} , q^{2k+4}; q^{2k+4})_\infty}{(q^2;q^2)_\infty}.
  \]
\end{cor}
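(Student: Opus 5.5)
The plan is to specialise \Cref{thm:even1} to \(b=0\) and then rewrite the resulting sum of products using the symmetry of the theta-type products. With \(b=0\), the hypothesis \(2a+2b\le k\) of \Cref{thm:even1} becomes \(2a\le k\), which is exactly the hypothesis of the corollary. The term \(2(s_{k-2b+1}+\cdots+s_{k-1})\) in the exponent is then an empty sum. So the multisum of \Cref{thm:even1} is literally the left-hand side of \Cref{cor:even}, and we get
\[
\text{LHS} = \sum_{s=0}^{a}\frac{(-q;q^2)_\infty(q^{k+1+2a-4s},q^{k+3-2a+4s},q^{2k+4};q^{2k+4})_\infty}{(q^2;q^2)_\infty}.
\]
Here \(\widetilde{\mathcal{Z}}^e\) and \Cref{thm:even2} are not needed; only the product side has to be rearranged.

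Write \(m=a-2s\), so \(m\) runs over \(\{a,a-2,\dots,-a\}\), and the \(s\)-th term is \(P(k+1+2m)\). Here \(P(x):=(q^{x},q^{2k+4-x},q^{2k+4};q^{2k+4})_\infty\), and indeed \(k+3-2a+4s=2k+4-(k+1+2m)\). Clearly \(P(x)=P(2k+4-x)\), since the product is just taken in the other order.

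For \(m\le 0\) set \(i=-m\), so the term is already \(P(k+1-2i)\). For \(m\ge1\) use the symmetry: \(P(k+1+2m)=P(k+3-2m)=P(k+1-2(m-1))\), and set \(i=m-1\).

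It remains to check that this assignment \(m\mapsto i\) is a bijection from \(\{a,a-2,\dots,-a\}\) onto \(\{0,1,\dots,a\}\).
\begin{itemize}
\item The values \(m\le 0\) give \(i=-m\in\{\dots,a-2,a\}\), all congruent to \(a\) modulo \(2\), going down to \(0\) or \(1\). Together these are all \(i\in\{0,\dots,a\}\) with \(i\equiv a \pmod 2\).
\item The values \(m\ge1\) give \(i=m-1\in\{a-1,a-3,\dots\}\), going down to \(0\) or \(1\). These are exactly the \(i\in\{0,\dots,a\}\) with \(i\equiv a-1 \pmod 2\).
\end{itemize}
For example, if \(a\) is even, then \(m\in\{0,-2,\dots,-a\}\) gives \(\{0,2,\dots,a\}\) and \(m\in\{2,4,\dots,a\}\) gives \(\{1,3,\dots,a-1\}\). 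The odd case is analogous.

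Hence the sum over \(s\) equals \(\sum_{i=0}^a P(k+1-2i)\). This is the claimed right-hand side, since \(2k+4-(k+1-2i)=k+3+2i\). All exponents \(k+1-2i\ge k+1-2a\ge1\) are positive, so no degenerate factors \((1;q^{2k+4})\) appear.

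The only real content is the invocation of \Cref{thm:even1}. The step needing care is this reindexing bookkeeping: the naive step-\(4\) progression \(k+1+2a-4s\) must be folded by the reflection \(x\mapsto 2k+4-x\) into the step-\(2\) progression \(k+1-2i\), and the two parity classes must be matched correctly.
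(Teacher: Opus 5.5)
Your proposal is correct and follows the paper's route: you specialise \Cref{thm:even1} at $b=0$, then fold the progression $k+1+2a-4s$ into $k+1-2i$ using the reflection $x\mapsto 2k+4-x$ of the triple product (the paper also specialises \Cref{thm:even2} at $b=0$, but only as a consistency check, which is not needed). Your handling of the boundary term $m=0$ is cleaner than the paper's split at $\lfloor a/2\rfloor$, whose set bookkeeping for even $a$ tacitly relies on $P(k+3)=P(k+1)$.
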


\medskip

The interest for these theorems with parity restrictions became even greater recently, as it was discovered that they have some connection to the representation theory of Ariki--Koike algebras, introduced independently by Ariki--Koike \cite{AK} and Brou\'e and Malle \cite{BM}.
Ariki and Mathas \cite{AM} showed that simple modules of Ariki--Koike algebras are labelled by Kleshchev multipartitions, and that their generating function is
\begin{equation}
\label{eq:AKprod}
\frac{(q^{a+1}, q^{k+1-a}, q^{k+2} ; q^{k+2})_\infty}{(q)_\infty (q;q^2)_\infty},
\end{equation}
which is exactly the product side of Theorem \ref{thm:4} where \(q\) is replaced by \(q^{1/2}\).
Interestingly, this product is also the principal specialisation of the character of standard modules of level $k$ for the affine Kac--Moody Lie algebra $A_1^{(1)}$, as studied by Meurman and Primc \cite{MP} and the first author, Hardiman and Konan \cite{DHK}.

By studying the connection between \eqref{eq:AKprod} and Ariki--Koike algebras, Chern, Li, Stanton, Xue, and Yee \cite{CLSXY2024} discovered and proved the following \(q\)-series identity, which was later reproved by Kanade and Lovejoy \cite{KL2024} in a more simple manner using the machinery of Bailey pairs.

\begin{thm}[\cite{KL2024, CLSXY2024}]\label{thm:AK}
  Let \( k \geq 1 \) and \( 0 \leq a \leq k-1 \).
  Then 
  \begin{equation}\label{eq:AK_id}
  \begin{split}
    \frac{(q^{a+1}, q^{k+1-a}, q^{k+2} ; q^{k+2})_\infty}{(q)_\infty (q;q^2)_\infty}
    &= \sum_{n_k,\dots,n_1 \geq 0} \frac{q^{\binom{n_k +1}{2} + \cdots + \binom{n_1 +1}{2}}}{(q)_{n_k}} \prod_{i=1}^{k-1} \Qbinom{n_{i+1} + \delta_{a,i}}{n_i}{q} \\
    &= \sum_{
    \substack{
    N_k \geq N_{k-1} \geq \cdots \geq N_{a+1} \\
    N_{a+1}+1 \geq N_a \geq \cdots \geq N_2 \geq 0
    }
    } \frac{q^{\sum_{i=2}^k \binom{N_i +1}{2}}(-q ; q)_{N_{2+\delta_{a+1,2}}}\left(1-q^{N_{a+1}+1}\right) }{\left(1-q^{N_{a+1}-N_a+1}\right) \prod_{i=2}^k(q ; q)_{N_i-N_{i-1}}},
  \end{split}
  \end{equation}
where \( N_1 = 0 \) and \( N_{k+1} = \infty \).
\end{thm}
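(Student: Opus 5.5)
The plan is to derive Theorem~\ref{thm:AK} from the parity-restricted identities of Theorem~\ref{thm:unif} (equivalently Theorem~\ref{thm:main} with $j=0$) by substituting $q \mapsto q^{1/2}$. The identity to aim for on the product side is
\[
\frac{(-q;q)_\infty}{(q;q)_\infty}=\frac{1}{(q)_\infty (q;q^2)_\infty}.
\]
Consequently, replacing $q$ by $q^{1/2}$ in \eqref{eq:W_bar} with parameters $(k,a')$ turns the product side into
\[
\frac{(q^{\lfloor \frac{a'+2}{2}\rfloor}, q^{k+2-\lfloor \frac{a'+2}{2}\rfloor}, q^{k+2};q^{k+2})_\infty}{(q)_\infty(q;q^2)_\infty}.
\]
The product in \eqref{eq:AK_id} is symmetric under $a\mapsto k-a$, so we may assume $2a\le k$. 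Choosing $a'=2a$ then gives exactly the left-hand side of \eqref{eq:AK_id}. It therefore suffices to show that both multisums in \eqref{eq:AK_id} equal the generating function $\sum_{f\in\overline{\mathcal W}_{k,2a}} q^{|f|/2}$, that is, the generating function of the half-weights of partitions in which odd parts appear an even number of times, subject to $f_i+f_{i+1}\le k$ and $f_1\le 2a$. (If needed, the case $2a>k$ can be handled directly via $a'=2(k-a)$ rather than by symmetry.)

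For the first multisum, I would not go through the Andrews-type multisum of \eqref{eq:W_bar} directly. Instead, I would run the particle motion of Section~\ref{sec:pm} with a different choice of minimal configurations, adapted to the parity condition. The idea is to pair the odd parts (each odd $i$ has $f_i=2g_i$) and to read the configuration in terms of layers: $n_i$ counts the particles in the $i$-th layer of the minimal staircase. Moving one particle of layer $i$ by one step then contributes the half-weight increments that produce $q^{\binom{n_i+1}{2}}$. The interlacing of consecutive layers, where a particle of layer $i$ cannot overtake those of layer $i+1$, should produce the $q$-binomials $\Qbinom{n_{i+1}}{n_i}{q}$, and the top layer gives the factor $1/(q)_{n_k}$. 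The initial condition $f_1\le 2a$ should account for the shift $\delta_{a,i}$, which allows one extra particle in layer $a$.

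As a fallback for this step, I would instead transform the multisum of \eqref{eq:W_bar} at $q^{1/2}$ analytically. The tools would be the $q$-binomial theorem and Euler's identities $\sum_n q^{\binom{n+1}{2}}/(q)_n \cdot z^n=(-zq;q)_\infty$ and $\sum_n z^n/(q)_n = 1/(z;q)_\infty$, applied to convert each $1/(q^2;q^2)_{s_i-s_{i+1}}$ at $q^{1/2}$, namely $1/(q;q)_{s_i-s_{i+1}}$, into a sum over new variables $n_i$. After reindexing, this should yield the binomial form.

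For the second multisum, the factor $(-q;q)_{N_2}$ and the term $(1-q^{N_{a+1}+1})/(1-q^{N_{a+1}-N_a+1})$ suggest a Gordon-marking-type count. Here the $N_i$ are cumulative layer sizes, and the factor $(-q;q)_{N_2}$ encodes the free choice of which small odd parts occur. I would derive it from the first sum by summing over $n_1$ with the $q$-Chu–Vandermonde identity, then setting $N_i=n_i+\dots$ (partial differences) and telescoping the boundary term at level $a$.

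I expect the main obstacle to be the exact matching of the boundary condition $f_1\le 2a$, i.e.\ the $\delta_{a,i}$ shift and the factor $(1-q^{N_{a+1}+1})/(1-q^{N_{a+1}-N_a+1})$ coming from the relaxed inequality $N_{a+1}+1\ge N_a$. The bulk of the computation, the $\binom{n+1}{2}$ exponents and the binomials, should be mechanical once the right minimal configuration is fixed.
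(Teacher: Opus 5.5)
\paragraph{There is a genuine gap.} The whole proof rests on the step ``both multisums equal $\sum_{f\in\overline{\mathcal W}_{k,2a}}q^{|f|/2}$'', and you give no argument for it.

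\begin{itemize}
\item The particle-motion sketch is not a construction. No minimal configuration or map is specified, and nothing shows the image is $\overline{\mathcal W}_{k,2a}$. Also, in this framework quadratic numerators such as $q^{\binom{n+1}{2}}$ come from the starting configuration, not from moving particles.
\item The analytic fallback is only a list of tools.
\item The step is not routine. Since $\frac{1}{(q)_{n_k}}\prod_i\Qbinom{n_{i+1}}{n_i}{q}$ telescopes to $\prod_i 1/(q)_{n_{i+1}-n_i}$, the first multisum is already an ordinary $k$-fold sum with the same denominators as \eqref{eq:W_bar} at $q^{1/2}$. Namely, it is the left side of \eqref{eq:modified_AK}. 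So there is nothing to ``convert''; all the difficulty lies in the linear terms of the exponent.
\item Given the known sum side of \eqref{eq:W_bar}, the identity you need is equivalent, after $q\mapsto q^2$, to \eqref{eq:open1}. The paper poses a combinatorial proof of this as an open problem, which is exactly what your particle-motion plan would have to supply. Except for $a=0$, where it reduces to the shift $s_i\mapsto s_i+1$, its two sides are not related by reindexing.
\item Reducing to $2a\le k$ via the $a\mapsto k-a$ symmetry of the product is not a valid WLOG, because the sum sides are not symmetric. The case $2a>k$ would need a second identity of the same kind.
\end{itemize}

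\paragraph{The missing idea} is to apply \Cref{thm:main} with $j=k-a$ and $r=a$, rather than $j=0$.

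\begin{enumerate}
\item With this choice the alternating middle block is empty. The sum side is $\operatorname{AK}_{a,k}(q)$, with linear part $-s_1-\dots-s_{k-a}+s_{k-a+1}+\dots+s_k$. The product side is the sum over $a\le i\le k$ in \Cref{cor:2}.
\item In $\operatorname{AK}_{a,k}-\operatorname{AK}_{a+1,k}$ only the $i=a$ product survives. On the sum side the two exponents differ by $2s_{k-a}$, which produces the factor $1-q^{2s_{k-a}}$. Replacing $q$ by $q^{1/2}$ gives \eqref{eq:modified_AK}.
\item The substitution $s_i=n_{k+1-i}+1$ for $i\le k-a$ and $s_i=n_{k+1-i}$ for $i>k-a$ is the form that matches \eqref{eq:modified_AK}, and it turns this into the $n$-sum. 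For $a\ge1$, the shifted binomial $\Qbinom{n_{a+1}+1}{n_a}{q}$ relaxes $n_a\le n_{a+1}$ to $n_a\le n_{a+1}+1$, i.e.\ $s_{k-a+1}\le s_{k-a}$. It also supplies $(q)_{n_{a+1}+1}/(q)_{n_{a+1}}=1-q^{s_{k-a}}$. For $a=0$ the factor comes instead from $1/(q)_{n_1}=(1-q^{s_k})/(q)_{s_k}$.
\end{enumerate}

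No new particle motion and no symmetry reduction are needed.

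Your route to the $N$-sum is the right kind of elementary step and is not where the difficulty lies. Summing out $n_1$ uses the finite $q$-binomial theorem $\sum_n\Qbinom{m}{n}{q}q^{\binom{n+1}{2}}=(-q;q)_m$, not $q$-Chu--Vandermonde. The paper does not spell this step out.
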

Here we used the \(q\)-binomial coefficients, defined for all \(0\leq j \leq n\) by
\[\left[{n\atop j}\right]_q:=\frac{(q)_n}{(q)_j(q)_{n-j}}.\]

Particle motion provides an even simpler proof of Theorem \ref{thm:AK}, as it follows directly from Theorem \ref{thm:main}.

Indeed, setting  \(s_i =n_{k+1-i}\) for \(i \in \{1, \dots, a \}\) and \(s_i =n_{k+1-i}+1\) for \(i \in \{a+1, \dots, k\}\), \Cref{eq:AK_id} can be rewritten as
\begin{equation}\label{eq:modified_AK}
  \sum_{s_1 \ge \cdots \ge s_k \geq 0} \frac{q^{(s_1^2 + \cdots + s_k^2 - s_1 - \cdots - s_{k-a} + s_{k-a+1} + \cdots + s_k)/2}(1-q^{s_{k-a}})}{(q)_{s_1 - s_2} \cdots (q)_{s_{k-1} - s_k}(q)_{s_k}}
  = \frac{\left(q^{a+1} , q^{k+1-a} , q^{k+2} ; q^{k+2}\right)_{\infty}}{(q ; q)_{\infty}\left(q ; q^2\right)_{\infty}}.
\end{equation}
%Replacing $q$ by $q^2$, we have
%\[
%  \sum_{s_1 \ge \cdots \ge s_k \geq 0} \frac{q^{s_1^2 + \cdots + s_k^2 - s_1 - \cdots - s_{k-a} + s_{k-a+1} + \cdots + s_k}(1-q^{2s_{k-a}})}{(q^2;q^2)_{s_1 - s_2} \cdots (q^2;q^2)_{s_{k-1} - s_k}(q^2;q^2)_{s_k}}
%  = \frac{\left(q^{2a+2} , q^{2k+2-2a} , q^{2k+4} ; q^{2k+4}\right)_{\infty}}{(q^2 ; q^2)_{\infty}\left(q^2 ; q^4\right)_{\infty}}.
%\]

Define
\[
  \operatorname{AK}_{a,k}(q) := \sum_{s_1 \ge \cdots \ge s_k \geq 0} \frac{q^{s_1^2 + \cdots + s_k^2 - s_1 - \cdots - s_{k-a} + s_{k-a+1} + \cdots + s_k}}{(q^2;q^2)_{s_1 - s_2} \cdots (q^2;q^2)_{s_{k-1} - s_k}(q^2;q^2)_{s_k}}.
\]

Setting \( j = k-a \) and \( r = a \) in \Cref{thm:main} yields the
following.

\begin{cor}\label{cor:2}
  \[
    \operatorname{AK}_{a,k}(q)
    = \sum_{s_1 \ge \cdots \ge s_k \geq 0} \frac{q^{s_1^2 + \cdots + s_k^2 - s_1 - \cdots - s_{k-a} + s_{k-a+1} + \cdots + s_k}}{(q^2;q^2)_{s_1 - s_2} \cdots (q^2;q^2)_{s_{k-1} - s_k}(q^2;q^2)_{s_k}}
    = \sum_{i = a}^{k} \frac{(q^{2i+2},q^{2k+2-2i},q^{2k+4}; q^{2k+4})_\infty}{(q^2;q^2)_\infty(q^2;q^4)_\infty}.
  \]
\end{cor}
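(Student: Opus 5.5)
The corollary should follow by direct specialisation of \Cref{thm:main} with $j=k-a$ and $r=a$. Since $0\le a\le k$, these are non-negative and satisfy $j+r=k\le k$, so the theorem applies. The plan is to check that the sum side specialises to $\operatorname{AK}_{a,k}(q)$, and then to rewrite the product side.

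\emph{Sum side.} The linear part of the exponent in \Cref{thm:main} has three pieces:
\begin{itemize}
\item the term $-(s_1+\cdots+s_j)$, which becomes $-(s_1+\cdots+s_{k-a})$;
\item the alternating block $s_{j+1}-s_{j+2}+\cdots\pm s_{k-r}$, which runs over indices $j+1,\dots,k-r$, i.e.\ from $k-a+1$ to $k-a$, so it is empty and contributes $0$;
\item the tail $s_{k-r+1}+\cdots+s_k$, which becomes $s_{k-a+1}+\cdots+s_k$.
\end{itemize}
The denominators are unchanged, so the multisum is exactly $\operatorname{AK}_{a,k}(q)$.

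\emph{Product side.} With $j=k-a$ and $r=a$ we get $k+2-r+j-2s=2(k+1-a-s)$, so $2\lfloor\frac{k+2-r+j-2s}{2}\rfloor=2(k+1-a-s)$. The summand for $s\in\{0,\dots,k-a\}$ is therefore
\[
\frac{(-q^2;q^2)_\infty\,(q^{2(k+1-a-s)},q^{2k+4-2(k+1-a-s)},q^{2k+4};q^{2k+4})_\infty}{(q^2;q^2)_\infty}.
\]
Substitute $i'=k-a-s$, which ranges over $0,\dots,k-a$. The pair of exponents becomes $(2i'+2,\,2k+2-2i')$. This pair is invariant, as an unordered pair, under $i'\mapsto k-i'$. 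Hence with $i=k-i'$ the summand has the factor $(q^{2i+2},q^{2k+2-2i},q^{2k+4};q^{2k+4})_\infty$, and $i$ ranges over $a,\dots,k$.

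Finally, Euler's identity $(-q;q)_\infty=1/(q;q^2)_\infty$, with $q$ replaced by $q^2$, gives $(-q^2;q^2)_\infty=1/(q^2;q^4)_\infty$. This yields the stated right-hand side.

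None of these steps is a real obstacle. The only points needing care are the bookkeeping check that the alternating block is empty when $j+r=k$, and the reindexing via the symmetry $i\mapsto k-i$ of the product.
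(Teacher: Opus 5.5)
Your proposal is correct and is the paper's own argument: specialise \Cref{thm:main} at $j=k-a$, $r=a$, observe that the alternating block is empty, and use $(-q^2;q^2)_\infty=1/(q^2;q^4)_\infty$. Your detour through $i'=k-a-s$ and the symmetry $i\mapsto k-i$ is valid but unnecessary, since the direct substitution $i=a+s$ already gives $(q^{2k+2-2i},q^{2i+2},q^{2k+4};q^{2k+4})_\infty$ with $i$ running from $a$ to $k$.
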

%\begin{proof}
%  By \Cref{thm:main} with \( j = k-a \) and \( r = a \), we obtain
%  \begin{align*}
%    \sum_{s_1 \ge \cdots \ge s_k \geq 0} \frac{q^{s_1^2 + \cdots + s_k^2 - s_1 - \cdots - s_{k-a} + s_{k-a+1} + \cdots + s_k}}{(q^2;q^2)_{s_1 - s_2} \cdots (q^2;q^2)_{s_{k-1} - s_k}(q^2;q^2)_{s_k}}
%    &= \sum_{s = 0}^{k-a} \frac{(-q^2;q^2)_\infty (q^{2k+2-2a-2s},q^{2a+2a+2},q^{2k+4}; q^{2k+4})_\infty}{(q^2;q^2)_\infty} \\
%    &= \sum_{i = a}^{k} \frac{(q^{2i+2},q^{2k+2-2i},q^{2k+4}; q^{2k+4})_\infty}{(q^2;q^2)_\infty(q^2;q^4)_\infty}.
%  \end{align*}
%\end{proof}

Theorem~\ref{thm:AK} (in the form \eqref{eq:modified_AK}) then follows
directly by taking difference
\( \operatorname{AK}_{a,k}(q) - \operatorname{AK}_{a+1,k}(q) \) in
\Cref{cor:2} and replacing \( q \) by \( q^{1/2} \).

\begin{remark}
This is reminiscent of a proof strategy in \cite{ADJM} where the identity
\[
  \sum_{s_1\geq\dots\geq
    s_{r-1}\geq0}\frac{q^{s_1^2+\dots+s_{r-1}^2-s_1-\dots-s_i}(1-q^{s_i})}{(q)_{s_1-s_2}\dots(q)_{s_{r-2}-s_{r-1}}(q)_{s_{r-1}}}=\frac{(q^{r-i},q^{r+i+1},q^{2r+1};q^{2r+1})_\infty}{(q)_\infty},
\]
valid for all integers \(r>0\) and \(0\leq i\leq r-1\), was proved by taking the difference of the cases \(i\) and \(i-1\) in this identity of Bressoud ~\cite[(3.3)]{Br80}:
\[
  \sum_{s_1\geq\dots\geq
    s_{r-1}\geq0}\frac{q^{s_1^2+\dots+s_{r-1}^2-s_1-\dots-s_i}}{(q)_{s_1-s_2}\dots(q)_{s_{r-2}-s_{r-1}}(q)_{s_{r-1}}}=\sum_{k=0}^{i}\frac{(q^{r-i+k},q^{r+i-k+1},q^{2r+1};q^{2r+1})_\infty}{(q)_\infty}.
\]
\end{remark}

\medskip

The paper is organised as follows. In Section \ref{sec:pm}, we introduce particle motion and prove some of its properties, including some related to parity. In Sections \ref{sec:proofmain}, \ref{sec:general-case-2} and \ref{sec:last}, we use particle motion and some enumeration techniques to prove Theorems \ref{thm:main}, \ref{thm:even1} and \ref{thm:even2}, respectively, together with their corollaries. We conclude with some open problems.

\section{Particle motion and its properties}
\label{sec:pm}
We describe the principle of Warnaar's particle motion adapted to the context of generalised frame sequences, following the formulation of \cite{DJJ25}.

Let \(f=(f_i)_{i \in \Z}\) be a generalised frequency sequence and
\(u\) be an index such that, letting \(h:= f_u +f_{u+1}\),
\[
  f_v +f_{v+1} \leq h \text{ for all } v \geq u.
\]
Then the process applied at index \(u\) depends on whether
\(f_{u+1} +f_{u+2}<h\) or \(f_{u+1} +f_{u+2}=h\):
\begin{enumerate}
\item If \(f_{u+1} +f_{u+2}<h\), then transform \((f_u, f_{u+1})\) to
  \((f_u-1,f_{u+1}+1)\), leave \(u\) unchanged (\emph{particle
    motion}).
\[
\begin{tikzpicture}[scale = .38]
          \draw (1,0) -- (4,0);
          \boxes[gray]{1}{3}   
          \boxes[gray]{2}{2}   
          \boxes{3}{1}    
          \draw[line width = 2pt] (1- .05,0) -- (3 + .05,0);
        \end{tikzpicture}
        \Rightarrow
        \begin{tikzpicture}[scale = .38]
          \draw (1,0) -- (4,0);
          \boxes[gray]{1}{2}   
          \boxes[gray]{2}{3}   
          \boxes{3}{1}    
          \draw[line width = 2pt] (1- .05,0) -- (3 + .05,0);
        \end{tikzpicture}
\]
\item If \(f_{u+1} +f_{u+2}=h\), then leave \((f_u, f_{u+1})\)
  unchanged, but change \(u\) to \(u+1\) (\emph{focus shift}).
\[
\begin{tikzpicture}[scale = .38]
          \draw (1,0) -- (4,0);
          \boxes[gray]{1}{3}   
          \boxes[gray]{2}{2}   
          \boxes{3}{3}    
          \draw[line width = 2pt] (1- .05,0) -- (3 + .05,0);
        \end{tikzpicture}
        \rightarrow
        \begin{tikzpicture}[scale = .38]
          \draw (1,0) -- (4,0);
          \boxes{1}{3}   
          \boxes[gray]{2}{2}   
          \boxes[gray]{3}{3}     
          \draw[line width = 2pt] (2- .05,0) -- (4 + .05,0);
        \end{tikzpicture}
\]
\end{enumerate}

Let \( \ppm_u^{(m)}(f) \) denote the frequency sequence obtained from
\( (f_u, f_{u+1}) \) after \( m \) particle motions (not counting
focus shifts) have been applied starting from index \(u\). Moreover,
if \( \ppm_u^{(m)}(f) = (\overline{f}_i)_{i \in \Z} \) and the final
focus is on the pair \( (\overline{f}_v, \overline{f}_{v+1}) \), then
we say that the pair \( (f_u, f_{u+1}) \) \emph{moves to}
\( (\overline{f}_v, \overline{f}_{v+1}) \).

Note that by construction,
\( (\overline{f}_i)_{i \in \Z} = \ppm_u^{(m)}(f) \) satisfies
\( \overline{f}_i + \overline{f}_{i+1} \leq h \) for all
\( i \geq u \). Each particle motion increases the weight of the
partition by $1$ (as it removes a part $u$ and adds a part $u+1$),
while focus shifts leave the partition unchanged. Both leave the
length of the partition unchanged.

For example, if \( f = (\dots, 0, f_1=4, f_2=0,2,1,3,1,0,\dots) \),
then \(\ppm_1^{(5)}(f) = (\dots, 0, \overline{f}_1=2,1,3,1,1,3,0,\dots)\), and
the pair \((f_1,f_2)\) moves to \((\overline{f}_5,\overline{f}_6)\). The process is shown in
Figure \ref{fig:1}.
  \begin{figure}[h]
    \centering
    \begin{align*}
      \begin{tikzpicture}[scale = .38]
        \draw (0,0) -- (8,0);
        \boxes[gray]{1}{4}   
        \boxes{3}{2}
        \boxes{4}{1}     
        \boxes{5}{3}   
        \boxes{6}{1}   
        \draw[line width = 2pt] (1- .05,0) -- (3 + .05,0);
      \end{tikzpicture}
      &\Rightarrow
        \begin{tikzpicture}[scale = .38]
          \draw (0,0) -- (8,0);
          \boxes[gray]{1}{3}   
          \boxes[gray]{2}{1}   
          \boxes{3}{2}
          \boxes{4}{1}     
          \boxes{5}{3}   
          \boxes{6}{1}   
          \draw[line width = 2pt] (1- .05,0) -- (3 + .05,0);
        \end{tikzpicture}
        \Rightarrow
        \begin{tikzpicture}[scale = .38]
          \draw (0,0) -- (8,0);
          \boxes[gray]{1}{2}   
          \boxes[gray]{2}{2}   
          \boxes{3}{2}
          \boxes{4}{1}     
          \boxes{5}{3}   
          \boxes{6}{1}   
          \draw[line width = 2pt] (1- .05,0) -- (3 + .05,0);
        \end{tikzpicture}
        \rightarrow
        \begin{tikzpicture}[scale = .38]
          \draw (0,0) -- (8,0);
          \boxes{1}{2}   
          \boxes[gray]{2}{2}   
          \boxes[gray]{3}{2}
          \boxes{4}{1}   
          \boxes{5}{3}   
          \boxes{6}{1}   
          \draw[line width = 2pt] (2- .05,0) -- (4 + .05,0);
        \end{tikzpicture}
        \Rightarrow
        \begin{tikzpicture}[scale = .38]
          \draw (0,0) -- (8,0);
          \boxes{1}{2}   
          \boxes[gray]{2}{1}   
          \boxes[gray]{3}{3}
          \boxes{4}{1}   
          \boxes{5}{3}   
          \boxes{6}{1}   
          \draw[line width = 2pt] (2- .05,0) -- (4 + .05,0);
        \end{tikzpicture}\\
      \rightarrow
      \begin{tikzpicture}[scale = .38]
          \draw (0,0) -- (8,0);
          \boxes{1}{2}   
          \boxes{2}{1}   
          \boxes[gray]{3}{3}
          \boxes[gray]{4}{1}   
          \boxes{5}{3}   
          \boxes{6}{1}   
          \draw[line width = 2pt] (3- .05,0) -- (5 + .05,0);
      \end{tikzpicture}
      &\rightarrow
        \begin{tikzpicture}[scale = .38]
          \draw (0,0) -- (8,0);
          \boxes{1}{2}   
          \boxes{2}{1}   
          \boxes{3}{3}
          \boxes[gray]{4}{1}   
          \boxes[gray]{5}{3}   
          \boxes{6}{1}   
          \draw[line width = 2pt] (4- .05,0) -- (6 + .05,0);
      \end{tikzpicture}
        \rightarrow
        \begin{tikzpicture}[scale = .38]
          \draw (0,0) -- (8,0);
          \boxes{1}{2}   
          \boxes{2}{1}   
          \boxes{3}{3}
          \boxes{4}{1}   
          \boxes[gray]{5}{3}   
          \boxes[gray]{6}{1}   
          \draw[line width = 2pt] (5- .05,0) -- (7 + .05,0);
        \end{tikzpicture}
      \Rightarrow
     \begin{tikzpicture}[scale = .38]
          \draw (0,0) -- (8,0);
          \boxes{1}{2}   
          \boxes{2}{1}   
          \boxes{3}{3}
          \boxes{4}{1}   
          \boxes[gray]{5}{2}   
          \boxes[gray]{6}{2}   
          \draw[line width = 2pt] (5- .05,0) -- (7 + .05,0);
        \end{tikzpicture}
      \Rightarrow
        \begin{tikzpicture}[scale = .38]
          \draw (0,0) -- (8,0);
          \boxes{1}{2}   
          \boxes{2}{1}   
          \boxes{3}{3}
          \boxes{4}{1}   
          \boxes[gray]{5}{1}   
          \boxes[gray]{6}{3}
         \vphantom{\boxes[gray]{7}{4}}
          \draw[line width = 2pt] (5- .05,0) -- (7 + .05,0);
        \end{tikzpicture}       
    \end{align*}
    \caption{Illustration of applying \( 5 \) particle motions
      starting from index $1$ in the frequency sequence
      \(f = (\dots, 0, f_1=4, f_2=0,2,1,3,1,0,\dots) \). The symbol
      \( \Rightarrow \) indicates a particle motion, and
      \( \rightarrow \) indicates a focus shift.}
    \label{fig:1}
  \end{figure}
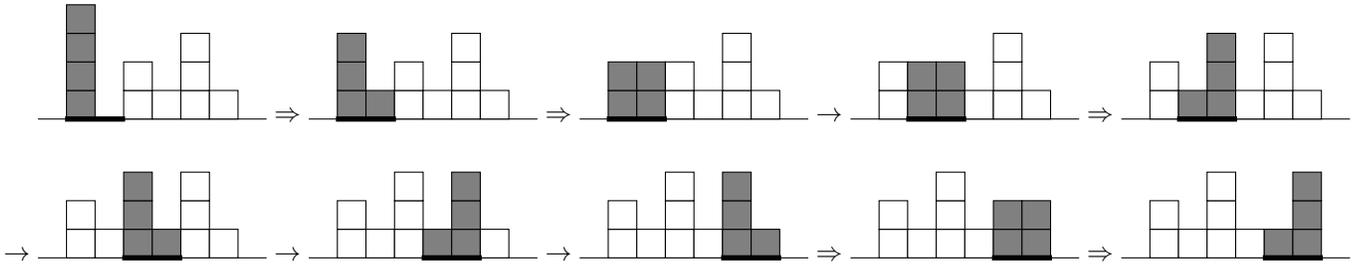
  
  To obtain the left-hand side of \eqref{eq:AG}, Warnaar's idea is to
  start with a minimal partition satisfying \eqref{eq:comb_AG} whose
  weight is \(s_1^2+\dots+s_{k}^2+s_{k-r+1}+\dots+s_{k}\), generating
  the numerator of \eqref{eq:AG}. Then, noting that
  \(\frac{1}{(q)_{s_1-s_2}\dots(q)_{s_{k-1}-s_{k}}(q)_{s_{k}}}\)
  generates \(k\)-tuples of partitions
  \(\bla=(\lambda^{(1)},\dots,\lambda^{(k)})\) where for all \( i \),
  \( \lambda^{(i)} \) has \( s_i - s_{i+1} \) \textit{non-negative}
  parts (after setting \( s_{k+1} := 0 \)), the parts of \(\bla\) are
  inserted into the minimal partition via particle motions. More
  precisely, Warnaar's result is equivalent to the following.

\begin{thm}[Warnaar~\cite{WarnaarParticle}, reformulated]\label{thm:Warnaar}
  For integers \( k \geq 1 \) and \( 0 \leq r \leq k \), let
  \[
    \mathcal{O}_{r,k} := \{(\bla, \tilde{\fs}_{r,k}(\bla)) : \text{\(\bla=(\lambda^{(1)},\dots,\lambda^{(k)})\) is a \(k\)-tuple of partitions into non-negative parts}\},
  \]
where
\begin{align*}
  \tilde{\fs}_{r,k}(\bla):= &(\dots,0,0,\underbrace{f_1=k-r,f_2=r,\dots,k-r,r}_{s_k ~\text{pairs}},\dots, 
  \\&\underbrace{\min(k-r,i),\max(0,i-k+r),\dots,\min(k-r,i),\max(0,i-k+r)}_{s_i - s_{i+1} ~\text{pairs}}, \dots,\underbrace{1,0,\dots,1,0}_{s_1 - s_2 ~\text{pairs}},0,0, \dots ),
\end{align*}
and for all \( i \), \( \lambda^{(i)} \) has length
\( s_i - s_{i+1} \) (after setting \(s_{k+1}=0\)).

Let 
\[
  \mathcal{B}_{r,k} := \{(f_i)_{i \in \Z} : \text{\(f_i + f_{i+1} \leq k\) for all \(i\), \(f_1 \leq k-r\), and \(f_i=0\) for all \(i \leq 0\)} \}.
\]

Define
\[
\tilde{\Lambda}( \bla, \tilde{\fs}_{r,k}(\bla)) =\left(\ppm_{1}^{(\lambda_0)} \circ \ppm_{3}^{(\lambda_1)} \circ \cdots \circ \ppm_{2s_1 -3}^{(\lambda_{s_1 -2})} \circ \ppm_{2s_1 -1}^{(\lambda_{s_1 -1})}\right) \left(\tilde{\fs}_{r,k}(\bla)\right),
\]
where we renamed
\[
  (\lambda_{s_1-1},\dots,\lambda_1,\lambda_0) :=
  (\lambda_1^{(1)},\dots,\lambda_{s_1 - s_2}^{(1)},\dots,\lambda_1^{(k-1)},\dots,\lambda_{s_{k-1} - s_k}^{(k-1)},\lambda_{1}^{(k)},\dots,\lambda_{s_k}^{(k)}).
\] 
Then \(\tilde{\Lambda}\) is a weight-preserving bijection between
\(\mathcal{O}_{r,k}\) and \(\mathcal{B}_{r,k}\).
\end{thm}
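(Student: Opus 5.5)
The plan is to prove the statement by induction on $s_1$, the total number of pairs. I would show that $\tilde{\Lambda}$ is well defined, that it preserves weight, that it lands in $\mathcal{B}_{r,k}$, and finally that it is invertible.

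\textbf{Weight.} First I check that $|\tilde{\fs}_{r,k}(\bla)| = s_1^2+\dots+s_k^2+s_{k-r+1}+\dots+s_k$.
\begin{itemize}
\item The pairs are placed at positions $(2m-1,2m)$ for $m=1,\dots,s_1$.
\item A pair of "level" $i$ contributes $i(2m-1)$ plus one extra unit for each particle sitting at the even position. There are $\max(0,i-k+r)$ such particles.
\item Summing over $m$ level by level gives the squares $s_i^2$.
\item The extra units give $\sum_i (s_i-s_{i+1})\max(0,i-k+r) = s_{k-r+1}+\dots+s_k$.
\end{itemize}
Each particle motion adds exactly $1$ to the weight, and focus shifts add nothing. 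Hence the total weight is $|\tilde{\fs}_{r,k}(\bla)|+\sum_i|\lambda^{(i)}|$, which is exactly what the sum side of \eqref{eq:AG} generates.

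\textbf{Well-definedness and image.} For this part I would establish an invariant along the composition. The operators are applied right to left: first $\ppm_{2s_1-1}^{(\lambda_{s_1-1})}$, which moves the rightmost pair (sum $1$, carrying the largest part $\lambda^{(1)}_1$). The next pair to its left is moved afterwards, and so on.

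The claim to prove is as follows. When we come to apply $\ppm_{2m-1}^{(\lambda_{m-1})}$, with $h=f_{2m-1}+f_{2m}$:
\begin{itemize}
\item every pair sum to the right of position $2m-1$ is at most $h$, so the hypothesis of particle motion holds;
\item the pair ends strictly to the left of the pair moved just before it, or in a configuration the latter determines.
\end{itemize}
This uses two facts. Pair sums are non-decreasing from right to left in $\tilde{\fs}_{r,k}(\bla)$. And within each $\lambda^{(i)}$ the parts are consumed in weakly decreasing order as we move left.

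The image lies in $\mathcal{B}_{r,k}$ for the following reasons.
\begin{itemize}
\item Particle motion never creates a pair sum exceeding $h\le k$ to the right of the focus.
\item Nothing ever moves leftwards, so $f_i=0$ for $i\le 0$ is preserved.
\item The constraint $f_1\le k-r$ holds because the leftmost pair starts as $(k-r,r)$ and can only lose particles at position $1$.
\end{itemize}

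\textbf{Bijectivity.} I would construct the inverse explicitly by reverse particle motion. Given $f\in\mathcal{B}_{r,k}$, the steps are:
\begin{itemize}
\item Read off $s_i$ as the number of disjoint "$i$-blocks" in a Gordon-marking-type decomposition.
\item Take the leftmost pair of maximal sum and push it back toward position $1$, or rather toward position $2m-1$ in the order reverse to the forward process. This means moving particles from $u+1$ to $u$ and shifting the focus left whenever the left neighbouring pair has the same sum.
\item Record the number of backward motions as the corresponding part.
\end{itemize}
Applying this to the pairs from left to right recovers $\tilde{\fs}_{r,k}(\bla)$ together with the parts $\lambda_0,\lambda_1,\dots$.

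The main obstacle I expect is showing that the recovered parts are weakly decreasing within each $\lambda^{(i)}$, and that forward and backward moves commute with the tie-breaking rule. This is the case where $f_{u+1}+f_{u+2}=h$, so that a moving pair catches up with the previously moved pair of the same sum.

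I would attack this by a local lemma. If two adjacent pairs of equal sum $h$ are moved with $m'\le m$ motions, then the left pair stops before the right pair's final focus. Conversely, given the final configuration, the leftmost pair of sum $h$ backs up uniquely. I would prove the lemma by comparing positions of the foci step by step.
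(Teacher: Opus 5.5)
\textbf{Gap: bijectivity is announced but not proved.} Your weight computation and the inclusion $\tilde\Lambda(\mathcal{O}_{r,k})\subseteq\mathcal{B}_{r,k}$ are fine. One small correction: the hypothesis of particle motion at $2m-1$ comes from the shape of the frame, not from the ordering of parts. The entry at $2m+1$ never increases, so $f_{2m}+f_{2m+1}\le \max(0,h-k+r)+\min(k-r,h')\le h$.

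Bijectivity is the whole content of the theorem, and you only announce it. Injectivity requires that after $\ppm^{(\lambda_i)}_{2i+1}$ the final focus $v_i$ is the smallest index $\ge 2i+1$ carrying the maximal pair sum $h_i$. The entries passed over are the old ones shifted two places left (cf.\ Proposition~\ref{pro:explicit_pm}). So this amounts to showing that, for consecutive pairs of equal level, the left one stops strictly before the final focus of the right one.

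That is your ``local lemma'', but it is the entire difficulty, and it must use $\lambda_i\le\lambda_{i+1}$. For $k=1$, the non-partition $(\lambda_1,\lambda_0)=(0,1)$ and the valid $(1,0)$ both send $(1,0,1,0)$ to $(1,0,0,1)$: two successive focus shifts leave a pair sum $1$ behind at position $1$.

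Also, the invariant in your second part (``the pair ends strictly to the left of the pair moved just before it'') is false when levels differ. Take $k=3$, $r=0$, $\tilde{\fs}_{0,3}(\bla)=(3,0,1,0)$, $\lambda_1=0$, $\lambda_0=6$. One gets $(1,0,1,2)$: the level-$3$ pair ends at $(3,4)$ after pushing the level-$1$ particle to position $1$. So only the equal-level statement is available, and it needs an actual proof, e.g.\ a deficit comparison via the formula for $v$ in Proposition~\ref{pro:explicit_pm}.

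\textbf{Surjectivity is also missing.} For arbitrary $f\in\mathcal{B}_{r,k}$, you must show that undoing the pair at the leftmost maximal pair sum always lands exactly on $(\min(k-r,h),\max(0,h-k+r))$ at position $2t-1$. Once the focus reaches $2t-1$, reverse moves can only increase that entry, so this is possible only if it is at most $k-r$. For $t=1$ that entry is $f_1$, so this is precisely where $f_1\le k-r$ enters; you used it only for the forward inclusion. For $t\ge 2$ the bound has to be propagated.

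You also need consecutive recovered counts of equal level to satisfy $\lambda_{t-1}\le\lambda_t$. That is the converse half of the lemma, not the ``backs up uniquely'' statement you wrote. Reading off the $s_i$ by Gordon marking is unnecessary, since the levels are the successive maximal pair sums, and it is unproved.

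\textbf{How the paper avoids this.} The pair $(i,0)$ becomes $(\min(k-r,i),\max(0,i-k+r))$ after $\max(0,i-k+r)$ particle motions with no focus shift, because the entry to its right is at most $k-r$. Hence $\tilde\Lambda(\bla,\tilde{\fs}_{r,k}(\bla))=\Lambda(\bla',\fs_1(\bla'))$, with $\bla'$ obtained by adding $\max(0,m-k+r)$ to the parts of $\lambda^{(m)}$. Bijectivity is then inherited from Theorem~\ref{th:u0-bij}, restricted to $f_1\le k-r$, and the proof of that theorem contains exactly the lemma you are missing.
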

The partitions of \(\mathcal{B}_{r,k}\) being exactly those satisfying
the frequency conditions \eqref{eq:comb_AG}, and those of
\(\mathcal{O}_{r,k}\) being generated by the left-hand side of
\eqref{eq:AG}, this proves bijectively that the generating function
for partitions satisfying \eqref{eq:comb_AG} is indeed the left-hand
side of \eqref{eq:AG}.

Here we define a map \(\Lambda\) which generalises Warnaar's map
\(\tilde{\Lambda}\), and which we will use in different contexts.

\begin{defn}
  Let \(\mathcal{P}\) be the set of pairs
  \((\bla, (f_i)_{i \in \Z})\), where
  \(\bla=(\lambda^{(1)},\dots,\lambda^{(k)})\) is a \(k\)-tuple of
  partitions (for some \(k \geq 0\)) into non-negative parts of total
  length \(\ell\), and \((f_i)_{i \in \Z}\) is a generalised frequency
  sequence where there exists some index \(u\) such that
\begin{itemize}
\item for all \(i \notin \{u, u+1, \dots, u+2\ell-1\}\), \(f_i=0\),
\item for all \(i \in \{0, \dots, \ell-1\}\), \(f_{u+2i} + f_{u+2i+1} \geq f_{u+2i+2} + f_{u+2i+3}\).
\end{itemize}
Let \( \ell_i \) be the length of \( \lambda^{(i)} \) and, as before,
rename
\[
  (\lambda_{\ell-1},\dots,\lambda_1,\lambda_0) :=
  (\lambda_1^{(1)},\dots,\lambda_{\ell_1}^{(1)},\dots,\lambda_{1}^{(k)},\dots,\lambda_{\ell_k}^{(k)}).
\]  

Then the map \(\Lambda\) is defined on \(\mathcal{P}\) as follows:
\(\Lambda(\bla, (f_i)_{i \in \Z})\) is the generalised frequency
sequence obtained from \((f_i)_{i \in \Z}\) by applying, for all
\(i \in \{0, \dots, \ell-1\}\), \(\lambda_i\) particle motions at
index \(u+2i\), starting from the right (i.e. starting with
\(i=\ell-1\) and ending with \(i=0\)). In other words,
\[
  \Lambda(\bla, (f_i)_{i \in \Z}) = \left(\ppm_{u}^{(\lambda_0)} \circ \ppm_{u+2}^{(\lambda_1)} \circ \cdots \circ \ppm_{u+2\ell-2}^{(\lambda_{\ell-1})}\right)((f_i)_{i \in \Z}).
\]
\end{defn}

In \cite{DJK2024}, the first author, Jouhet and Konan considered the
particular case where in the definition above, \((f_i)_{i \in \Z}\) is
the \emph{frame sequence} associated with \(\bla\), namely:
\[
\fs(\bla) = (\dots, 0, \underbrace{f_0=k,f_1=0,\dots,k,0}_{s_k ~\text{pairs}},\dots,\underbrace{i,0,\dots,i,0}_{s_i - s_{i+1} ~\text{pairs}},\dots,\underbrace{1,0,\dots,1,0}_{s_1 - s_2 ~\text{pairs}},0, \dots ),
\]
which has weight \(s_1^2+\dots+s_{k}^2-s_{1}-\dots-s_{k}\), where for
all \( i \), \( \lambda^{(i)} \) has \( s_i - s_{i+1} \) parts.

Then they showed the following.
\begin{thm}[\cite{DJK2024}]
\label{th:bijDJK24}
The map \(\Lambda\) induces a bijection between the sets 
\[
  \mathcal{P}_k = \left\{ (\bla, \fs(\bla)) :  \bla \text{ is a  \(k\)-tuple of partitions} \right\},
\]
and
\[
  \mathcal{A}_k = \left\{ (f_i)_{i \in \Z} :  f_i + f_{i+1} \leq k \text{ for all } i \geq 0 \text{ and \(f_i =0\) for all \(i <0\)} \right\}.
\]
\end{thm}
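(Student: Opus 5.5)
We prove Theorem~\ref{th:bijDJK24} following Warnaar's strategy for Theorem~\ref{thm:Warnaar}: show that $\Lambda$ is well defined with image in $\mathcal{A}_k$, then build an explicit inverse. Write $h_i := f_{2i}+f_{2i+1}$ for the pair sums of $\fs(\bla)$. For $\fs(\bla)$, the pair $(f_{2i},f_{2i+1})$ equals $(m,0)$ where $m$ is the index $t$ of the component $\lambda^{(t)}$ containing the part $\lambda_i$. Since the renaming lists $\lambda^{(1)}$ first (as $\lambda_{\ell-1}$), the pair sums $h_i$ are weakly decreasing in $i$. This is exactly the second bullet in the definition of $\mathcal{P}$.

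\textbf{Step 1: well-definedness and image in $\mathcal{A}_k$.} The moves are performed from right to left, starting with $\ppm_{2\ell-2}^{(\lambda_{\ell-1})}$. We prove by induction the following invariant. Just before the $\lambda_i$ particle motions at index $2i$ are applied, the current sequence $g$ satisfies $g_v+g_{v+1}\le h_i$ for all $v\ge 2i$, and $g_{2i}+g_{2i+1}=h_i$. This is precisely the hypothesis required to start particle motion at index $2i$. Positions $<2i$ have not yet been touched, and by construction particle motion keeps all pair sums to the right of the focus at most $h_i$. Since $h_{i-1}\ge h_i$, the invariant passes to the next step. The pair $(g_{2i-1},g_{2i})$ straddles the boundary, but $g_{2i-1}=0$ initially and $g_{2i}\le h_i\le h_{i-1}$, so the bound still holds there. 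Because $h_0\le k$ and every entry at negative indices stays $0$, the final sequence lies in $\mathcal{A}_k$, and weights add as $|\fs(\bla)|+|\bla|$.

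\textbf{Step 2: the inverse map.} Given $f\in\mathcal{A}_k$, we recover $\bla$ greedily. Let $h$ be the largest pair sum $f_v+f_{v+1}$, and take the rightmost pair attaining it. Apply \emph{reverse} particle motion: move a particle from $v+1$ to $v$ when allowed, and shift the focus left when the pair to the left also has sum $h$. Continue until the focus pair sits at positions $(0,1)$ in the normal form $(h,0)$. The number of reverse motions performed is the part $\lambda_0$, and this pair is recorded as a part of $\lambda^{(h)}$. Remove this pair, shift the rest of the sequence left by two, and recurse. Here the key fact is that the greedy rule correctly identifies which particle was moved last, which is what makes the inverse well defined.

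\textbf{Step 3: mutual inverseness and partition ordering.} One must verify two things. First, $\lambda_i\ge\lambda_{i-1}$ whenever the two parts belong to the same $\lambda^{(t)}$; this gives partitions in the right order and encodes the non-increasing condition. Second, the forward and backward moves are mutually inverse.

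This last point is the main obstacle: the pairs moved later, with smaller $i$, must never pass the positions of the pairs moved earlier in a way that breaks the reconstruction. I would attack it through a comparison lemma. If two consecutive pairs have equal sum $h$ and move by $m\le m'$ steps, then the final focus position of the left pair is at most that of the right pair. I would prove this by tracking the focus position as a monotone function of the number of moves, mimicking \cite{WarnaarParticle}. This lemma yields injectivity, and counting via the inverse yields surjectivity.
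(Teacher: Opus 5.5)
\textbf{There is a genuine gap: Step 2 reverses from the wrong pair, so your inverse does not land in $\mathcal{P}_k$.}

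Your Step 1 is correct. Your overall plan (forward well-definedness, an explicit inverse by reverse particle motion, an ordering lemma for pairs of equal sum) is the Warnaar-style argument of \cite{DJK2024,DJJ25}, which the paper cites rather than reproves. The ``key fact'' you invoke in Step 2 is false for the rule you chose. The moves at index $0$ are applied last and use the smallest part of the component with the largest pair sum. So the last-moved pair ends up as the \emph{leftmost} pair of maximal sum, not the rightmost. This is why the paper, in the proof of \Cref{lem:pm_parity} (following \cite[Definition~5.8]{DJJ25}), reverses from the smallest index $v\ge 2i$ attaining the maximum.

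Concretely, take $k=1$ and $\lambda^{(1)}=(3,2)$, so $\fs(\bla)$ has $f_0=f_2=1$. Then $\ppm_2^{(3)}$ moves the particle at $2$ to $5$, and $\ppm_0^{(2)}$ moves the particle at $0$ to $2$, so $\Lambda(\bla,\fs(\bla))$ has $f_2=f_5=1$. Your rule starts at the pair in positions $(5,6)$. With your shift/move convention it performs $3$ reverse motions ($5\to4$, then $2\to1\to0$) before reaching $(1,0)$ at $(0,1)$. The remainder, a single particle at $2$, then gives $2$. You therefore recover $\lambda_0=3>\lambda_1=2$, i.e.\ $\lambda^{(1)}_2>\lambda^{(1)}_1$, which is not an element of $\mathcal{P}_1$.

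In fact $\Lambda$ applied to the unordered data $(\lambda_1,\lambda_0)=(2,3)$ gives the same $f$. Reversing from \emph{any} maximal pair and then moving forward returns the same sequence. So the whole content of the bijection is choosing the starting pair so that the parts come out ordered, and that is exactly what the rightmost rule gets wrong. The leftmost rule gives $2$, then $3$, as it should.

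\textbf{Even with the leftmost rule, Step 3 needs more than you state.}

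First, ``at most'' must be sharpened. If $h_i=h_{i+1}$ and $\lambda_i\le\lambda_{i+1}$, the final focus of the move at $2i$ satisfies $v_i\le v_{i+1}-2$. This follows from \Cref{pro:explicit_pm}: the cumulative slack met by that move up to index $v_{i+1}$ is exactly $\lambda_{i+1}$. Only this guarantees that pair $i+1$ is left untouched. Combine it with the fact that the pair $(v_i-1,v_i)$ has sum at most $h_i-1$ when $v_i>2i$. By induction, no pair strictly left of $v_i$ then has the maximal sum, which is what lets the inverse recover $\lambda_i$ and gives injectivity.

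Second, surjectivity needs a converse that your forward lemma does not supply. For an \emph{arbitrary} $f\in\mathcal{A}_k$, the counts obtained from successive leftmost maximal pairs must be weakly increasing as long as the maximal sum does not drop. If the first count exceeded the second, the same slack computation would force $v_0\ge v_1$. That would put a maximal pair at index $v_1-2<v_0$, contradicting the choice of $v_0$. ``Counting via the inverse'' cannot replace this argument, since there is no independent enumeration of $\mathcal{A}_k$; providing one is the point of the theorem.
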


Considering well-chosen subsets of \( \mathcal{A}_k \) and
\( \mathcal{P}_k \), they deduced bijections for the sum-side of
several identities such as Bressoud's identity \cite[(3.4)]{Br80}
  \begin{equation}\label{eq:Br}
    \sum_{s_1\geq\dots\geq s_{k}\geq0}\frac{q^{s_1^2+\dots+s_{k}^2+s_{k-r+1}+\dots+s_{k}}}{(q)_{s_1-s_2}\dots(q)_{s_{k-1}-s_{k}}(q^2;q^2)_{s_{k}}}=\frac{(q^{k+1-r},q^{k+1+r},q^{2k+2};q^{2k+2})_\infty}{(q)_\infty},
  \end{equation}
  which is an even moduli counterpart to the Andrews--Gordon
  identities, or another identity of Bressoud~\cite[(3.3)]{Br80}
\begin{equation}\label{eq:Br3.3}
\sum_{s_1\geq\dots\geq s_{k}\geq0}\frac{q^{s_1^2+\dots+s_{k}^2-s_{1}-\dots-s_{j}}}{(q)_{s_1-s_2}\dots(q)_{s_{k-1}-s_{k}}(q)_{s_{k}}}=\sum_{s=0}^{j}\frac{(q^{k+2-j+2s},q^{k+1+j-2s},q^{2k+3};q^{2k+3})_\infty}{(q)_\infty}.
\end{equation}
Using this bijection and identities \eqref{eq:AG} and \eqref{eq:Br} as
ingredients, this allowed them to prove several new and known
\(q\)-series identities. The \(0\) frequencies in the frame sequences
were key in obtaining the minus signs in the powers in the numerator of the
left-hand side of \eqref{eq:Br3.3}, for example.

Then, in \cite{DJJ25} the two authors and Jouhet reformulated the
bijection \(\Lambda\) in its form presented here, and used it on
subsets of \( \mathcal{A}_k \) and \( \mathcal{P}_k \) with more
intricate conditions, to prove other \(q\)-series identities, known
and new, among which Theorem \ref{thm:Sta3.2}.

In this paper, our goal is to use particle motion to prove identities
involving partitions with parity restrictions. To prove some of these
identities, we will need some partitions where the parts \(-1\) are
allowed. Therefore, we extend Theorem \ref{th:bijDJK24} by allowing
any offset of the frame sequence to the left or to the right.

\begin{defn}
\label{def:u_0-stuff}
Let \(u \in \Z\) and let \(\bla=(\lambda^{(1)},\dots,\lambda^{(k)})\)
be a \(k\)-tuple of partitions (for some \(k \geq 0\)) into
non-negative parts where for all \( i \), \( \lambda^{(i)} \) has
\( s_i - s_{i+1} \) parts. The \(u\)-\emph{frame sequence} associated
with \(\bla\) is the generalised frequency sequence
\[
\fs_{u}(\bla) = (\cdots, 0, \underbrace{ f_{u}=k,f_{u+1}=0,\dots,k,0}_{s_k ~\text{pairs}},\dots,\underbrace{i,0,\dots,i,0}_{s_i - s_{i+1} ~\text{pairs}},\dots,\underbrace{1,0,\dots,1,0}_{s_1 - s_2 ~\text{pairs}},0, \dots ),
\]
which has weight \(s_1^2+\dots+s_{k}^2 +(u-1)( s_{1}+\dots+s_{k})\).
\end{defn}

\begin{thm}
\label{th:u0-bij}
Let \(u \in \Z\), \(k \in \Z_{>0}\). Define
\[
  \mathcal{P}_{k,u} = \left\{ (\bla, \fs_{u}(\bla)) :  \bla \text{ is a  \(k\)-tuple of partitions} \right\},
\]
and
\[
  \mathcal{A}_{k,u} = \left\{ (f_i)_{i \in \Z} :  f_i + f_{i+1} \leq k \text{ for all } i \in \Z, \text{ and } f_i=0 \text{ for all } i<u \right\}.
\]
Then the map \(\Lambda\) induces a bijection between the sets
\(\mathcal{P}_{k,u}\) and \(\mathcal{A}_{k,u}\).
\end{thm}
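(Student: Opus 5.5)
The plan is to reduce Theorem~\ref{th:u0-bij} to the case $u=0$, which is exactly Theorem~\ref{th:bijDJK24}, using a translation of indices. For $t\in\Z$, let $\tau_t$ act on generalised frequency sequences by $(\tau_t f)_i := f_{i-t}$. Shifting every part by $t$ preserves the length of the partition and adds $t$ times the length to the weight. We then make three observations.

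\emph{First}, $\tau_u$ restricts to a bijection from $\mathcal{A}_k=\mathcal{A}_{k,0}$ onto $\mathcal{A}_{k,u}$. The condition $f_i+f_{i+1}\le k$ for all $i$ is translation invariant, and the condition $f_i=0$ for $i<0$ becomes $f_i=0$ for $i<u$. One detail needs care: in $\mathcal{A}_k$ the sum condition is imposed only for $i\ge 0$. The case $i=-1$ gives $f_{-1}+f_0=f_0$, and this is at most $k$ because it is bounded by $f_0+f_1\le k$. For $i<-1$ the sums are $0$. So the two sets agree after translation.

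\emph{Second}, $\fs_u(\bla)=\tau_u(\fs(\bla))$ directly from the definitions, since $\fs=\fs_0$.

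\emph{Third}, the particle motion operators commute with translation: $\ppm_{v+t}^{(m)}\circ\tau_t=\tau_t\circ\ppm_v^{(m)}$. To check this, note that the rule at index $v$ depends only on the values $h=f_v+f_{v+1}$, on $f_{v+1}+f_{v+2}$, and on the hypothesis $f_w+f_{w+1}\le h$ for $w\ge v$. Each of these is stated relative to the current focus. So each elementary step (a particle motion, or a focus shift from $v$ to $v+1$) corresponds exactly under $\tau_t$, and induction on the number of steps gives the identity.

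Combining these, for $(\bla,\fs_u(\bla))\in\mathcal{P}_{k,u}$ the index $u$ in the definition of $\Lambda$ is the starting index of the frame sequence. We then get
\[
\Lambda(\bla,\fs_u(\bla))=\bigl(\ppm_{u}^{(\lambda_0)}\circ\cdots\circ\ppm_{u+2\ell-2}^{(\lambda_{\ell-1})}\bigr)(\tau_u\fs(\bla))=\tau_u\,\Lambda(\bla,\fs(\bla)).
\]
This shows that $\Lambda$ on $\mathcal{P}_{k,u}$ is the composite of three maps: the obvious bijection $\mathcal{P}_{k,u}\to\mathcal{P}_k$ given by $(\bla,\fs_u(\bla))\mapsto(\bla,\fs(\bla))$, then the bijection $\Lambda:\mathcal{P}_k\to\mathcal{A}_k$ of Theorem~\ref{th:bijDJK24}, then $\tau_u:\mathcal{A}_k\to\mathcal{A}_{k,u}$. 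Hence it is a bijection.

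We also check that $(\bla,\fs_u(\bla))$ lies in the domain $\mathcal{P}$ of $\Lambda$. Its support is in $\{u,\dots,u+2\ell-1\}$ with $\ell=s_1$, and the pair sums $k,\dots,k,\,k-1,\dots,1$ are non-increasing, so it does.

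The main obstacle is bookkeeping rather than any new idea. One must make sure the index $u$ in the definition of $\Lambda$ for $\mathcal{P}_{k,u}$ is the translated one, so that the composition is literally conjugated by $\tau_u$. One must also verify the translation invariance of the particle-motion rule carefully, including that the focus never moves left of its start, so no boundary effect at index $0$ versus $u$ arises. The mismatch between "for all $i\ge0$" and "for all $i\in\Z$" in the two definitions is the only place where the sets are not literally translates, and the observation above about $i=-1$ handles it.

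Finally, as a consistency check, the weight shift matches: $|\tau_u f|=|f|+u\,s_1\cdots$. More precisely, the frame sequence has $\sum_i i(s_i-s_{i+1})=s_1+\dots+s_k$ particles, which gives the stated weight $s_1^2+\dots+s_k^2+(u-1)(s_1+\dots+s_k)$. Since particle motions add one to the weight each and commute with $\tau_u$, $\Lambda$ still adds $|\bla|$ to the weight.
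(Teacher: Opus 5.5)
Your proof is correct, but it takes a different route from the paper's. The paper proves the theorem by asserting that the proof of Theorem~\ref{th:bijDJK24} in the earlier papers never uses that the frame sequence starts at index $0$, only its shape, and so goes through verbatim for $\fs_u$. In other words, it asks the reader to reopen and re-audit that earlier proof. You instead use Theorem~\ref{th:bijDJK24} as a black box and conjugate by the translation $\tau_u$. This needs only two facts: the translation-equivariance $\ppm_{v+t}^{(m)}\circ\tau_t=\tau_t\circ\ppm_v^{(m)}$, and the identity $\fs_u(\bla)=\tau_u\fs(\bla)$.

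Both arguments rest on the same translation invariance, but yours is self-contained. It also makes explicit the one place where the two sets are not literal translates: $\mathcal{A}_k$ imposes $f_i+f_{i+1}\le k$ only for $i\ge 0$, while $\mathcal{A}_{k,u}$ imposes it for all $i\in\Z$. You handle this correctly via $f_{-1}+f_0=f_0\le f_0+f_1\le k$.

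The same conjugation gives the general-$u$ version of Proposition~\ref{pro:bij_XZ} directly from its $u=0$ case. The paper justifies that proposition by the same ``nothing depends on the starting index'' remark, whereas under $\tau_u$ the sets $\mathcal{X}_{j,r,k,u}$ and $\mathcal{Z}_{j,r,k,u}$ correspond exactly to their $u=0$ counterparts. The paper's phrasing buys only brevity.

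Your caution about which index is used in $\Lambda$ is warranted. The definition of $\mathcal{P}$ only asks for the existence of a suitable index, and for $\fs_u(\bla)$ the index $u-1$ also satisfies those conditions. So one must, as you do, take the subscript of $\fs_u$.
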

\begin{proof}
  The proof of Theorem \ref{th:bijDJK24} in \cite{DJK2024} (and
  reformulated in \cite{DJJ25}) actually never used the fact that the
  frame sequence starts at \(0\), only its shape. So the exact same
  proof as in these papers also proves this stronger result.
\end{proof}

Clearly, Theorem \ref{th:u0-bij} generalises Theorem \ref{th:bijDJK24}
(which is the case \(u=0\)). But Theorem \ref{thm:Warnaar} also
follows from it. Indeed, for any \(k\)-tuple of partitions
\(\bla=(\lambda^{(1)},\dots,\lambda^{(k)})\),
\[
  \tilde{\Lambda}(\bla, \tilde{\fs}_{r,k}(\bla)) = \Lambda(\bla', \fs_{1}(\bla')),
\]
where \(\bla'\) is obtained from \(\bla\) by adding \(\max(0,m-k+r)\)
to each part of \(\lambda^{(m)}\), for all \(m \in \{1, \dots, k\}\).
The result follows by seeing that \(\Lambda\) induces a bijection
between
\[
  \left\{ (\bla, \fs_{1}(\bla)) :  \bla=(\lambda^{(1)},\dots,\lambda^{(k)}) \text{and the parts of \(\lambda^{(m)}\) are at least \(\max(0,m-k+r)\) for all \(m\)} \right\} \subset \mathcal{P}_{k,1},
\]
and
\[
  \left\{ (f_i)_{i \in \Z} :  f_i + f_{i+1} \leq k \text{ for all } i \in \Z, \  f_i=0 \text{ for all } i<1, \text{ and } f_1 \leq k-r \right\} \subset \mathcal{A}_{k,1}.
\]

To prove some properties about \(\Lambda\), it is convenient to split it as a composition
of intermediate steps of particle motion.
\begin{defn}\label{def:theta}
  For any \(k\)-tuple of partitions \(\bla\) of total length $s_1$, define
  \begin{equation}\label{eq:theta}
    \fs_u(\bla) =: \theta^{(s_1)} , \theta^{(s_1-1)} , \dots  , \theta^{(1)}, \theta^{(0)} = \Lambda(\bla,\fs_u(\bla)),
  \end{equation}
  where each \( \theta^{(i)} \) is obtained from \( \theta^{(i+1)} \)
  by
  \begin{equation}\label{eq:recursion_theta}
    \theta^{(i)} = \ppm_{u+2i}^{(\lambda_{i})}\left(\theta^{(i+1)}\right) \quad\mbox{for \( i \in \{ s_1 - 1 , \dots,1,0 \} \)}.
  \end{equation}
\end{defn}

Let us start by showing the following simple property.

\begin{prop}
\label{prop:shift}
For any \( k \)-tuple of partitions
\( \bla = (\lambda^{(1)}, \dots, \lambda^{(k)}) \) of total length $s_1$, let \( \bla^+ \)
be the \( k \)-tuple of partitions obtained from \( \bla \) by adding
\( m \) to every part of \( \lambda^{(m)} \) for all
\(m \in \{1, \dots, k\}\). Note that for all \(u \in \Z\),
\(\fs_{u}(\bla) = \fs_{u}(\bla^+)\).
Then for all \(u \in \Z\),
\[ \Lambda(\bla^+, \fs_{u-1}(\bla^+)) =\Lambda(\bla, \fs_{u}(\bla)).\]
\end{prop}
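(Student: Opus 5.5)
The plan is to compare the two runs of particle motion step by step, using the intermediate sequences of Definition~\ref{def:theta}. Write \(\theta^{(i)}\) for the intermediate sequences of \(\Lambda(\bla,\fs_u(\bla))\) and \(\theta'^{(i)}\) for those of \(\Lambda(\bla^+,\fs_{u-1}(\bla^+))\). The renaming \((\lambda_{s_1-1},\dots,\lambda_0)\) is the same for \(\bla\) and \(\bla^+\), since both have the same lengths \(s_i-s_{i+1}\). If the \(i\)-th part comes from \(\lambda^{(m)}\), then \(\lambda^+_i=\lambda_i+m\), and the corresponding pair of the frame sequence is \((m,0)\). In \(\fs_u(\bla)\) this pair sits at positions \((u+2i,u+2i+1)\); in \(\fs_{u-1}(\bla^+)\) it sits at \((u-1+2i,u+2i)\). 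Intuitively, the \(m\) extra particle motions in the shifted run should first push the \(m\) particles one step to the right, exactly to where the unshifted frame sequence has them, and then the two runs should coincide.

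I would prove the following invariant by descending induction on \(i\in\{s_1,\dots,0\}\):
\[
\theta'^{(i)}_v=\theta^{(i)}_v \text{ for all } v\geq u+2i-1,
\]
while for \(v\leq u+2i-2\) the sequence \(\theta'^{(i)}\) is still the untouched frame \(\fs_{u-1}(\bla^+)\). This is consistent with \(\theta^{(i)}_{u+2i-1}=0\) (a frame zero) and with the fact that \(\theta^{(i)}\) is the untouched frame \(\fs_u(\bla)\) at positions \(\leq u+2i-1\). For \(i=s_1\) it holds trivially: both sequences vanish from position \(u+2s_1-1\) onward.

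For the inductive step from \(i+1\) to \(i\), suppose the \(i\)-th part comes from \(\lambda^{(m)}\) and let \(v=u-1+2i\). By induction, \(\theta'^{(i+1)}\) has \((m,0)\) at \((v,v+1)\), and \(\theta'^{(i+1)}_{v+2}=\theta^{(i+1)}_{u+2i+1}=0\). So \(h=m\), and the validity hypothesis \(f_w+f_{w+1}\leq h\) for \(w\geq v\) follows from the corresponding hypothesis for the unshifted run.

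1. During the first \(m\) particle motions the value \(f_{v+1}+f_{v+2}\) ranges over \(0,1,\dots,m-1<h\). Hence these are genuine particle motions with no focus shift, and they produce \((0,m)\) at \((v,v+1)\).
2. Now \(f_{v+1}+f_{v+2}=m=h\), so a focus shift to index \(v+1=u+2i\) occurs.
3. At index \(u+2i\) the current sequence agrees with \(\theta^{(i+1)}\) on all positions \(\geq u+2i\), namely \(m\) at \(u+2i\), \(0\) at \(u+2i+1\), and identical values beyond. The focus is at the same index with the same \(h=m\).
4. The remaining \(\lambda_i\) particle motions therefore perform exactly the moves of \(\ppm_{u+2i}^{(\lambda_i)}(\theta^{(i+1)})\).

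After this step, \(\theta'^{(i)}\) has a \(0\) at \(u+2i-1\) and agrees with \(\theta^{(i)}\) at all positions \(\geq u+2i\). Positions \(\leq u+2i-2\) are untouched frame. This gives the invariant for \(i\). Taking \(i=0\) yields \(\theta'^{(0)}=\theta^{(0)}\) at all positions \(\geq u-1\). Below that, \(\theta'^{(0)}\) is the frame \(\fs_{u-1}(\bla^+)\), which vanishes there, as does \(\theta^{(0)}\). This proves the proposition.

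The main obstacle is the bookkeeping at the transition in steps 2 and 3. There one must check three things: that \(\theta^{(i+1)}_{u+2i+1}\) is indeed \(0\), which holds because position \(u+2i+1\) has not yet been touched in the unshifted run; that the focus shift lands on the right pair; and that the value of \(h\) in the shifted run equals the value \(m\) used in the unshifted run. Since \(m\geq 1\), the degenerate case of an empty \((0,0)\) pair never arises.
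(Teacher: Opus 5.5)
Your proof is correct and follows essentially the same route as the paper. Both arguments use a descending induction on \(i\) showing that the two runs agree at all positions \(\geq u+2i-1\). In both, the first \(m\) extra particle motions carry the pair \((m,0)\) from \((u+2i-1,u+2i)\) to \((u+2i,u+2i+1)\), after which the remaining \(\lambda_i\) moves reproduce \(\ppm_{u+2i}^{(\lambda_i)}(\theta^{(i+1)})\).

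Your version is somewhat more explicit than the paper's. You check that no focus shift occurs during the first \(m\) moves, that \(h=m\) in both runs, that the validity hypothesis transfers, and that positions below \(u-1\) also agree.
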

\begin{proof}
Recall that 
\begin{align*}
\Lambda(\bla^+, \fs_{u-1}(\bla^+)) &= \left(\ppm_{u-1}^{(\lambda^+_0)} \circ \ppm_{u+1}^{(\lambda^+_1)} \circ \cdots \circ \ppm_{u+2\ell-3}^{(\lambda^+_{\ell-1})}\right)(\fs_{u-1}(\bla^+)),\\
\Lambda(\bla, \fs_{u}(\bla)) &=\left(\ppm_{u}^{(\lambda_0)} \circ \ppm_{u+2}^{(\lambda_1)} \circ \cdots \circ \ppm_{u+2\ell-2}^{(\lambda_{\ell-1})}\right)(\fs_{u}(\bla)).
\end{align*}
Mimicking \Cref{def:theta}, let us define $\theta^{(s_1),+} =\fs_{u-1}(\bla^+)$, and
\begin{equation*}
\theta^{(i),+} = \ppm_{u+2i-1}^{(\lambda^+_{i})}\left(\theta^{(i+1),+}\right) \quad\mbox{for \( i \in \{ s_1 - 1 , \dots,1,0 \} \)}.
\end{equation*}

We show by downward induction that for all $0 \leq i \leq s_1$,
$\theta^{(i),+}_j$ and $\theta^{(i)}_j$ coincide for  all $j \geq u+2i-1$, and that $\theta^{(i),+}_{u+2i-2}=\theta^{(i),+}_{u+2i-1}=0$.

This is clearly true for $i=s_1$, as all frequencies of $j \geq u+2s_1-1$ are zero in both partitions. Moreover, by definition, the frequency of $u+2i-2$ is $0$ in $\fs_{u-1}(\bla^+)$.

Now assume it is true for $i+1$ and prove it for $i$.
Let $m$ be such that \( \lambda_i\) is a part of \( \lambda^{(m)} \) in
  \( \bla \).
  Then the process $\ppm_{u+2i-1}^{(\lambda_{i}+m)}\left(\theta^{(i+1),+}\right)$ starts
  with the pair \( (m,0) \) at positions \( (u+2i-1 , u+2i) \).
  Through the first \( m \) particle motions, the pair \( (m,0) \) at
  positions \( (u+2i-1, u+2i) \) moves to the pair \( (m,0) \) at
  positions \( (u+2i, u+2i+1) \) (indeed, by the induction hypothesis, before applying the particle motion, the frequencies of $u+2i$ and $u+2i+1$ were $0$). Then the remaining \( \lambda_i\)
  particle motions are applied from this position. This is the
  same as applying directly $\ppm_{u+2i}^{(\lambda_{i})}\left(\theta^{(i+1)}\right)$, as by the induction hypothesis, all the frequencies $j \geq u+2i+1$ coincided in $\theta^{(i+1),+}$ and $\theta^{(i+1)}$. Hence, the frequencies of all $j \geq u+2i-1$ coincide in $\theta^{(i),+}$ and $\theta^{(i)}$.
Finally, because  the first $m$ particle motions in $\ppm_{u+2i-1}^{(\lambda_{i}+m)}\left(\theta^{(i+1),+}\right)$  moved the pair \( (m,0) \) at positions \( (u+2i-1 , u+2i) \) to the pair \( (m,0) \) at positions \( (u+2i, u+2i+1) \), the frequency $\theta^{(i),+}_{u+2i-1}$ becomes zero. Moreover, the frequency $\theta^{(i),+}_{u+2i-2}$ is zero by definition of $\fs_{u-1}(\bla^+)$.
Hence the property is proved by induction, and
$\theta^{(0),+}= \Lambda(\bla^+, \fs_{u-1}(\bla^+)) \) is the same as
  \(\theta^{(0)}= \Lambda(\bla, \fs_{u}(\bla)) \).  
\end{proof}

We now rewrite some of the results of \cite{DJJ25} but extended to the
context of generalised frequency sequences and \(u\)-frame sequences.
None of the proofs used the fact that the frame sequence started at
index \(0\), so they work in the exact same way and we do not repeat
them here. However we write the statement of the results in the more
general context.

The first proposition is a restatement of
\cite[Proposition~5.4]{DJJ25} in this context, and with \( v \)
replaced by \( v+2 \).
\begin{prop}[{\cite[Proposition~5.4]{DJJ25}}]\label{pro:explicit_pm}
  Let \( f = (f_i )_{i \in \Z} \) be a generalised frequency sequence.
  Suppose that \( u \) is an integer such that there exists
  \( h \geq 1 \) with \( (f_u, f_{u+1}) = (h,0) \) and
  \( f_i + f_{i+1} \leq h \) for all \( i \geq u \). For a
  non-negative integer \(m\), let
  \((\overline{f}_i)_{i \in \Z} = \ppm_u^{(m)}(f)\). Define
  \begin{equation}\label{eq:u_bar}
    v := \min \left\{ t \geq u : \sum_{i = u+2}^{t+2} \left( h - (f_{i -1} + f_{i}) \right) \geq m \right\}.
  \end{equation}
  Then \( (f_u, f_{u+1}) \) moves to
  \( (\overline{f}_v, \overline{f}_{v +1}) \). Moreover
  \((\overline{f}_i)_{i \in \Z}\) is given explicitly by:
  \begin{align}
    \label{eq:f_bar}
    \overline{f}_i =
    \begin{cases}
      f_i & \mbox{if \( i < u \),}\\
      f_{i+2} & \mbox{if \( u \leq i < v \),} \\
      f_{v+2} + \sum_{j = u+2}^{v+2} (h - (f_{j-1} + f_j)) - m & \mbox{if \( i = v \),} \\
      f_{v+1} + m - \sum_{j = u+2}^{v +1} (h - (f_{j-1} + f_j)) & \mbox{if \( i = v +1 \),} \\
      f_i & \mbox{if \( i \geq v+2 \).}
    \end{cases}
  \end{align}
\end{prop}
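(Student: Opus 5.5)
The plan is to follow the particle motion step by step, tracking the focus pair as it travels to the right. We establish an explicit invariant describing the whole sequence at the moment the focus arrives at each index, and then read off the final configuration.

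Write
\[
S_t := \sum_{j=u+2}^{t} \bigl(h-(f_{j-1}+f_j)\bigr),
\]
with $S_{u+1}=0$. By hypothesis every summand is non-negative. Since $f$ has finite support, the summands equal $h\geq 1$ for large $j$, so $S_t\to\infty$ and $v$ in \eqref{eq:u_bar} is well defined.

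\textbf{Invariant.} We claim that, as long as fewer than $m$ particle motions have been performed, the focus reaches the pair at positions $(i,i+1)$ (for $i\geq u$) exactly when $S_{i+1}$ motions have been performed. At that moment the current sequence $g$ satisfies
\[
g_j=f_j \ (j<u),\qquad g_j=f_{j+2}\ (u\le j<i),\qquad g_i=h-f_{i+1},\qquad g_j=f_j\ (j\geq i+1).
\]
In particular $g_i+g_{i+1}=h$, so the focus pair always has sum $h$.

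For $i=u$ this is exactly the hypothesis $(f_u,f_{u+1})=(h,0)$, with $S_{u+1}=0$.

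For the inductive step, suppose the focus is at $(i,i+1)$ and the invariant holds. The next pair $(g_{i+1},g_{i+2})=(f_{i+1},f_{i+2})$ is still untouched, so its sum is $f_{i+1}+f_{i+2}\le h$. Each particle motion at index $i$ lowers $g_i$ by one and raises $g_{i+1}$ by one. Hence the rule ``move while $g_{i+1}+g_{i+2}<h$'' performs exactly $h-(f_{i+1}+f_{i+2})$ motions, which is the $j=i+2$ summand of $S$. It then performs a focus shift, unless the budget of $m$ motions runs out first.

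After these motions,
\[
g_i = h-f_{i+1}-\bigl(h-f_{i+1}-f_{i+2}\bigr)=f_{i+2},\qquad g_{i+1}=h-f_{i+2}.
\]
This is the invariant at index $i+1$, after a total of $S_{i+2}$ motions.

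\textbf{Conclusion.} Let $v$ be the first index $t\ge u$ with $S_{t+2}\ge m$. The $m$-th motion then occurs while the focus is at $(v,v+1)$, after $S_{v+1}<m$ motions were done on arrival there. Since the count of $m$ excludes trailing focus shifts, the process stops with focus at $(v,v+1)$, which is the claim that $(f_u,f_{u+1})$ moves to $(\overline f_v,\overline f_{v+1})$.

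Applying the invariant at index $v$ and then performing the remaining $m-S_{v+1}$ motions gives:
\begin{itemize}
\item $\overline f_i=f_i$ for $i<u$, and $\overline f_i=f_{i+2}$ for $u\le i<v$;
\item $\overline f_v=h-f_{v+1}-(m-S_{v+1})=f_{v+2}+S_{v+2}-m$;
\item $\overline f_{v+1}=f_{v+1}+m-S_{v+1}$;
\item $\overline f_i=f_i$ for $i\ge v+2$.
\end{itemize}
This is exactly \eqref{eq:f_bar}.

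The main point requiring care is the bookkeeping at the boundary: checking that when $S_{v+2}=m$ exactly the process halts at $v$ rather than shifting to $v+1$, which is consistent with the $\min$ in \eqref{eq:u_bar}. A second check is that the hypothesis $f_i+f_{i+1}\le h$ for $i\geq u$ guarantees every motion count is non-negative, so the process is well defined at each step.
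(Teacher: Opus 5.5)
Your proof is correct. The paper gives no argument for this statement. It imports it from \cite[Proposition~5.4]{DJJ25} (with $v$ replaced by $v+2$) and only asserts that the proofs there never use where the frame sequence starts.

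Your induction on the focus position is a self-contained replacement for that citation. Its invariant is that the focus arrives at $(i,i+1)$ after exactly $S_{i+1}$ motions, with $g_j=f_{j+2}$ for $u\le j<i$, $g_i=h-f_{i+1}$, and all entries from $i+1$ on untouched. Because it refers only to positions relative to $u$, it also makes the claimed independence from the starting index evident rather than asserted.

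Two small points need tidying.

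First, for $m=0$ your sentence ``$S_{v+1}<m$'' is false. In that case $v=u$, no motion occurs, and the formula is checked directly: $\overline f_u=f_{u+2}+S_{u+2}=h$ and $\overline f_{u+1}=0$. For $m\geq 1$ the inequality $S_{v+1}<m$ does hold. If $v>u$ it follows from the minimality of $v$, and if $v=u$ it follows from $S_{u+1}=0$.

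Second, halting at $v$ when $S_{v+2}=m$ is not something to be proved. It is the convention that no focus shifts are performed after the $m$-th motion, and it is encoded by the $\min$ with $\geq$ in \eqref{eq:u_bar}. It affects only the ``moves to'' assertion and not $\overline f$ itself. Indeed, when $S_{v+2}=m$, your invariant at index $v+1$ describes exactly the same sequence as the formula at index $v$.
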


We define three sets which play a key role in all the proofs of this
paper.
\begin{defn}\label{def:XYZ}
  Let \( j,r \), and \( k \) be non-negative integers such that
  \( j+r \leq k \). Define
  \begin{itemize}
  \item \( \mathcal{X}_{j,r,k,u} \) to be the set of all pairs
    \( (\bla, \fs_u(\bla))\), where
    \( \bla = (\lambda^{(1)},\dots,\lambda^{(k)}) \) is a
    \( k \)-tuple of partitions and \( \fs_u(\bla) \) is the \(u\)-frame
    sequence corresponding to \( \bla \), subject to the condition
    that each part of \( \lambda^{(m)} \) is at least
    \( m - j + \max\{m-(k-r), 0\} \) for each \( m = 1,\dots,k \),
  \item \( \mathcal{Y}_{j,r,k,u} \) to be the set of all generalised
    frequency sequences \( (f_i)_{i \in \Z} \) such that
    \( f_i + f_{i+1} \leq k \) for all \( i \geq u \), \( f_i =0\) for
    all \( i < u \), and
  \[
    f_u \in \{\ell+\max\{\ell-(j-r),0 \} : 0 \leq \ell \leq j\},
  \]
\item \( \mathcal{Z}_{j,r,k,u} \) to be the set of all generalised
  frequency sequences \( (f_i)_{i \in \Z} \) such that
  \( f_i + f_{i+1} \leq k \) for all \( i \geq u \), \( f_i =0\) for
  all \( i < u \), and
  \[
    f_u \leq j - \max\{f_u + f_{u+1} - (k-r),0\}.
  \]
  \end{itemize}
\end{defn}

The map \(\Lambda\) restricts to a bijection between two of these
sets.
\begin{prop}[\cite{DJJ25}, Proposition~6.8]\label{pro:bij_XZ}
  The map \( \Lambda \) is a weight-preserving bijection from
  \( \mathcal{X}_{j,r,k,u} \) to \( \mathcal{Z}_{j,r,k,u} \).
\end{prop}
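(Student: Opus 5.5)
The plan is to get this as a restriction of the bijection of Theorem~\ref{th:u0-bij}. Since $\mathcal{X}_{j,r,k,u}\subset\mathcal{P}_{k,u}$ and $\mathcal{Z}_{j,r,k,u}\subset\mathcal{A}_{k,u}$, and $\Lambda$ is already a bijection $\mathcal{P}_{k,u}\to\mathcal{A}_{k,u}$, it suffices to prove two things. First, $\Lambda$ is weight-preserving in the sense relevant here. Second, for $(\bla,\fs_u(\bla))\in\mathcal{P}_{k,u}$,
\[
(\bla,\fs_u(\bla))\in\mathcal{X}_{j,r,k,u}\iff\Lambda(\bla,\fs_u(\bla))\in\mathcal{Z}_{j,r,k,u}.
\]
Weight preservation is immediate. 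Each particle motion raises the weight by $1$ and focus shifts do nothing, so $|\Lambda(\bla,\fs_u(\bla))|=|\fs_u(\bla)|+|\bla|$. This is exactly the weight attached to the pair $(\bla,\fs_u(\bla))$.

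For the equivalence, I would look only at the last step $\theta^{(0)}=\ppm_u^{(\lambda_0)}(\theta^{(1)})$ of Definition~\ref{def:theta}. All the steps $\theta^{(i)}$ with $i\ge1$ act at indices $\ge u+2$, and pairs only move right. Hence in $\theta^{(1)}$ the pair at $(u,u+1)$ is still $(M,0)$, where $M$ is the largest index with $\lambda^{(M)}$ nonempty, and $\lambda_0$ is the smallest part of $\lambda^{(M)}$.

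I would then apply Proposition~\ref{pro:explicit_pm} with $h=M$, $m=\lambda_0$ and $f=\theta^{(1)}$. This gives $\overline f_u,\overline f_{u+1}$ explicitly in terms of $\lambda_0$ and the deficits $M-(\theta^{(1)}_{i-1}+\theta^{(1)}_i)$ for $i\ge u+2$.

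The key local claim to establish is the following. If $\lambda_0=p$ is small relative to $M$, the pair stays at index $u$ (that is, $v=u$) and $(\overline f_u,\overline f_{u+1})$ is determined as follows. The first $\min(p,M)$ motions just transfer particles, giving $(M-p,p)$ as long as the deficit at $u+2$ allows. After that the pair either shifts away, leaving $\overline f_u=\theta^{(1)}_{u+2}$, or is blocked by the deficit.

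In all cases the value of $f_u$ together with $f_u+f_{u+1}$ in the output should satisfy
\[
f_u\le j-\max\{f_u+f_{u+1}-(k-r),0\}
\]
exactly when $p\ge M-j+\max\{M-(k-r),0\}$. The threshold $M-j$ governs $f_u\le j$ when $M\le k-r$. The extra $\max\{M-(k-r),0\}$ is needed because, when $M>k-r$, the sum $f_u+f_{u+1}$ can reach $M$ and the bound on $f_u$ tightens by the same amount.

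The inequalities then have to propagate from the smallest part of $\lambda^{(M)}$ to all parts of all $\lambda^{(m)}$. For $m=M$, monotonicity of the parts suffices. For $m<M$, I would use Proposition~\ref{prop:shift} (shifting parts by $m$ and the frame by one) together with downward induction on the $\theta^{(i)}$.

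The induction hypothesis would be that each intermediate $\theta^{(i)}$, viewed from index $u+2i$ onward, lies in $\mathcal{Z}_{j,r,k,u+2i}$, restricted to the sub-tuple of parts $\lambda_{s_1-1},\dots,\lambda_i$, if and only if those parts satisfy the $\mathcal{X}$-bounds. The step from $i+1$ to $i$ is exactly the local claim above, applied at index $u+2i$.

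The main obstacle I expect is this local claim when the pair is blocked: $\theta^{(1)}_{u+2}+\theta^{(1)}_{u+3}$ may be close to $M$, so the explicit formula \eqref{eq:f_bar} involves the deficit at $u+2$. One then has to check that the $\mathcal{Z}$-condition at $u+2$, from the induction hypothesis, forces the $\mathcal{Z}$-condition at $u$ precisely under the $\mathcal{X}$-bound. I would first try to handle this by splitting into the cases $M\le k-r$ and $M>k-r$, and in each case compare $p$ with the first deficit.
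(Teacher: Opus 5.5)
\textbf{Comparison with the paper.} The paper does not argue this statement. It imports \cite[Proposition~6.8]{DJJ25} and only remarks that the proof there never uses the starting index of the frame. Your proposal is instead a self-contained proof: you restrict the bijection of Theorem~\ref{th:u0-bij}, then run a downward induction over the $\theta^{(i)}$, applying Proposition~\ref{pro:explicit_pm} at each step. This route works. It removes the dependence on \cite{DJJ25} and makes the independence from $u$ visible rather than asserted.

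\textbf{Two points need fixing.} First, your key local claim is false as stated. Write $x=\theta^{(1)}_{u+2}$ and $y=\theta^{(1)}_{u+3}$. If the pair travels to $v\ge u+2$, the output pair at $(u,u+1)$ is just $(x,y)$, so membership in $\mathcal{Z}$ is decided by the tail and not by $p$. The induction step you actually need is: the output lies in $\mathcal{Z}_{j,r,k,u}$ if and only if the tail lies in $\mathcal{Z}_{j,r,k,u+2}$ \textit{and} $p\ge M-j+\max\{M-(k-r),0\}$. In particular you must prove ``output in $\mathcal{Z}$ implies tail in $\mathcal{Z}$''. That direction is what carries a violation by a part of some $\lambda^{(m)}$, $m<M$, down to index $u$, and your sketch only addresses the converse.

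Second, Proposition~\ref{prop:shift} plays no role. At its own step, each part is the smallest part of the top partition of the current sub-tuple. So the same local lemma, with $M$ replaced by the height $m_i$ of the pair at $u+2i$, handles every $m$.

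\textbf{A clean way to close the local step.} Rewrite the $\mathcal{Z}$-condition as $f_u\le j$ and $2f_u+f_{u+1}\le j+k-r$. Rewrite the $\mathcal{X}$-bound as $\lambda\ge m-j$ and $\lambda\ge 2m-j-(k-r)$. By \eqref{eq:f_bar} there are three cases:
\begin{itemize}
\item $v=u$ iff $p\le M-x$, with output $(M-p,p)$;
\item $v=u+1$ iff $M-x<p\le 2M-2x-y$, with output $(x,2M-2x-p)$;
\item $v\ge u+2$ otherwise, with output $(x,y)$.
\end{itemize}
Using $x+y\le M$, which is part of the hypothesis of Proposition~\ref{pro:explicit_pm}, one checks that in all three cases
\[
\overline f_u=\max(M-p,\,x),\qquad 2\overline f_u+\overline f_{u+1}=\max(2M-p,\,2x+y).
\]
Unrolling from the empty tail, the image $\Lambda(\bla,\fs_u(\bla))$ satisfies $f_u=\max(0,\max_i(m_i-\lambda_i))$ and $2f_u+f_{u+1}=\max(0,\max_i(2m_i-\lambda_i))$. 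Both inclusions $\Lambda(\mathcal{X}_{j,r,k,u})\subseteq\mathcal{Z}_{j,r,k,u}$ and $\Lambda^{-1}(\mathcal{Z}_{j,r,k,u})\subseteq\mathcal{X}_{j,r,k,u}$ follow at once. No case split on $M\le k-r$ versus $M>k-r$ is needed.
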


Taking the notation of \Cref{def:theta}, we prove a few results concerning parity conditions on
\( \{ \theta^{(s_1)}, \dots, \theta^{(0)}\} \), in order to show that \(\Lambda\) preserves parity conditions.

\begin{lem}\label{lem:pm_parity}
  Let \(u \in 2\Z\). Let \(\bla\) be a \(k\)-tuple of partitions. For
  all \( i \in \{s,\dots,0\} \), every odd part of
  \( \theta^{(i+1)} \) appears an even number of times and
  \( \lambda_i \) is even if and only if every odd part of
  \( \theta^{(i)} \) appears an even number of times.
\end{lem}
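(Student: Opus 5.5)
The plan is to compute $\theta^{(i)}$ explicitly from $\theta^{(i+1)}$ using \Cref{pro:explicit_pm}, and then read off the parities of its entries at odd indices.

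\textbf{Setup.} Fix $i$ and write $f=\theta^{(i+1)}$. Let $m'$ be such that $\lambda_i$ is a part of $\lambda^{(m')}$, and put $h:=m'$. By construction of $\fs_u(\bla)$, and since the particle motions at indices $u+2i',\, i'>i$, only modify positions $\geq u+2i+2$, we have $(f_{u+2i},f_{u+2i+1})=(h,0)$. Moreover, all positions $<u+2i$ still carry the frame sequence, whose entries at odd indices are $0$ because $u$ is even. I will also use the condition $f_t+f_{t+1}\le h$ for all $t\ge u+2i$. This is the standing hypothesis that makes $\Lambda$ well defined; it holds because the parts to the right come from levels $\le h$ in the frame, and particle motion preserves the bound (this is the same fact used in \cite{DJJ25}). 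So \Cref{pro:explicit_pm} applies with starting index $u+2i$ and $\lambda_i$ motions. It yields an index $v\ge u+2i$ with the following description of $\bar f=\theta^{(i)}$:
\begin{itemize}
\item $\bar f_t=f_t$ for $t<u+2i$ and for $t\ge v+2$;
\item $\bar f_t=f_{t+2}$ for $u+2i\le t<v$;
\item only the pair $(\bar f_v,\bar f_{v+1})$ is new.
\end{itemize}

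\textbf{Reduction.} The shift $t\mapsto t+2$ preserves the parity of the index, and positions $<u+2i$ have odd-index entries equal to $0$. Hence every odd index outside $\{v,v+1\}$ carries an even value in $\theta^{(i)}$ if and only if it did in $\theta^{(i+1)}$. Under the hypothesis on $\theta^{(i+1)}$, the statement therefore reduces to showing that the entry at whichever of $v,v+1$ is odd is even precisely when $\lambda_i$ is even. (The case $\lambda_i=0$ is trivial.)

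\textbf{Parity computation.} Set $\Sigma=\sum_{j=u+2i+2}^{v+1}\bigl(h-(f_{j-1}+f_j)\bigr)$. The pairs $(j-1,j)$ cover the indices $u+2i+1,\dots,v+1$, and every interior index is counted twice. Since $f_{u+2i+1}=0$, this gives
\[
\Sigma\equiv h\,(v-u-2i)+f_{v+1}\pmod 2 .
\]
By \eqref{eq:f_bar}, $\bar f_{v+1}=f_{v+1}+\lambda_i-\Sigma$, so
\[
\bar f_{v+1}\equiv \lambda_i+h\,(v-u-2i)\pmod 2 .
\]
Also from \eqref{eq:f_bar}, $\bar f_v+\bar f_{v+1}=f_{v+1}+f_{v+2}+\bigl(h-f_{v+1}-f_{v+2}\bigr)=h$. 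Now split on the parity of $v$ (recall $u$ is even):
\begin{itemize}
\item If $v$ is even, then $v+1$ is odd and $v-u-2i$ is even, so $\bar f_{v+1}\equiv\lambda_i$.
\item If $v$ is odd, then $v-u-2i$ is odd, so $\bar f_{v+1}\equiv\lambda_i+h$. The odd index is $v$, and $\bar f_v=h-\bar f_{v+1}\equiv\lambda_i$.
\end{itemize}
In either case the odd-indexed entry of the new pair is even if and only if $\lambda_i$ is even, which proves the lemma.

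\textbf{Main obstacle.} The step needing most care is justifying the hypotheses of \Cref{pro:explicit_pm} for $\theta^{(i+1)}$: that the pair at $(u+2i,u+2i+1)$ is still exactly $(h,0)$, and that the bound $f_t+f_{t+1}\le h$ holds to its right. I would verify both by the same downward induction on $i$ used in \Cref{prop:shift}.
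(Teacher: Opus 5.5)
Your proof is correct and has the same skeleton as the paper's. Both use \Cref{pro:explicit_pm} to see that, apart from the new pair at $(v,v+1)$, every entry of $\theta^{(i)}$ is an entry of $\theta^{(i+1)}$ either left in place or shifted by two. So everything hinges on the parity of the odd-indexed entry of that new pair. You differ from the paper in two places.

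\textbf{The parity of the new entry.} The paper never evaluates the new pair. It reduces the weight identity $|\theta^{(i)}|=|\theta^{(i+1)}|+\lambda_i$ modulo $2$ and obtains $\lambda_i\equiv v\,\theta^{(i)}_v+(v+1)\,\theta^{(i)}_{v+1}$. This says at once that the odd-indexed new entry has the parity of $\lambda_i$, with no case split on $v$ and no need to handle your $\Sigma$. You instead read $\theta^{(i)}_{v+1}$ off \eqref{eq:f_bar}, reduce $\Sigma$ modulo $2$ by telescoping, and use $\theta^{(i)}_v+\theta^{(i)}_{v+1}=h$. This is somewhat longer but equally rigorous.

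\textbf{The converse direction.} The paper reconstructs $\theta^{(i+1)}$ from $\theta^{(i)}$ by reverse particle motion. You observe that the correspondence between the odd indices of $\theta^{(i+1)}$ other than $u+2i+1$ and the odd indices of $\theta^{(i)}$ outside $\{v,v+1\}$ preserves values. Hence both implications follow from a single equivalence, which is cleaner and avoids the inverse map altogether.

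\textbf{A small precision.} The fact doing the work in your reduction is $\theta^{(i+1)}_{u+2i+1}=0$, since this is the one odd index that is left unmatched. The vanishing of odd-indexed entries at positions $<u+2i$ is not what matters there, because those entries are untouched anyway.
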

\begin{proof}
  First show that if every odd part of \( \theta^{(i+1)} \) appears an
  even number of times and \( \lambda_i \) is even, then every odd
  part of \( \theta^{(i)} \) appears an even number of times. Let
  \( (\theta^{(i+1)}_{u+2i}, \theta^{(i+1)}_{u+2i+1}) = (h,0) \) for
  some \( h >0 \). Suppose that, applying
  \( \ppm_{u+2i}^{(\lambda_{i})} \), this pair
  \( (\theta^{(i+1)}_{u+2i}, \theta^{(i+1)}_{u+2i+1}) \) moves to
  \( (\theta^{(i)}_v, \theta^{(i)}_{v+1}) \). By
  \Cref{pro:explicit_pm}, all entries \( \theta^{(i)}_j \) for
  \(j \not\in \{v, v+1\}\) either remain at the same position or are
  shifted by two from their original position in \( \theta^{(i+1)} \).
  Hence the parity of all \( \theta^{(i)}_j \) for
  \(j \not\in \{v, v+1\}\) is preserved, and it suffices to show the
  result for \( \theta^{(i)}_v \) and \( \theta^{(i)}_{v+1} \).

  From \eqref{eq:recursion_theta}, we have
  \( |\theta^{(i+1)}| + \lambda_i = |\theta^{(i)}| \). Using
  \eqref{eq:f_bar}, we obtain
  \[
    \sum_{ t\in \Z } (t\,\theta^{(i+1)}_t) + \lambda_i = \sum_{ t < u+2i } (t\,\theta^{(i+1)}_t) + \sum_{u+2i \leq t < v } (t\,\theta^{(i+1)}_{t+2}) + v\cdot\theta^{(i)}_v + (v+1)\cdot\theta^{(i)}_{v+1} + \sum_{ v+2 \leq t } (t\,\theta^{(i+1)}_t).
  \]
  Taking modulo \( 2 \), we obtain
  \begin{equation}\label{eq:parity_eq}
    (u+2i)\theta^{(i+1)}_{u+2i}+ (u+2i+1)\theta^{(i+1)}_{u+2i+1} + \lambda_i \equiv \lambda_i \equiv  v\cdot\theta^{(i)}_v + (v+1)\cdot\theta^{(i)}_{v+1} \pmod{2}.
  \end{equation}
  Since \( \lambda_i \) is even by assumption, and exactly one of
  \( v \) and \( v+1 \) is odd, then this odd part must appear an even
  number of times, which completes the proof.
  
  \medskip Now we show the reverse implication. We recover
  \( \theta^{(i+1)} \) from \( \theta^{(i)} \) as follows. Among all
  indices \( j \geq 2i \), let \( v \) be the smallest index such that
  \( \theta^{(i)}_j + \theta^{(i)}_{j+1} \) has the maximum value. We
  then apply reverse particle motion (see
  \cite[Definition~5.8]{DJJ25}) to \( \theta^{(i)} \) at the pair
  \( (\theta^{(i)}_v, \theta^{(i)}_{v+1}) \) until it moves at
  positions \( 2i \) and \( 2i+1 \) and the second entry becomes
  \( 0 \). The number of such inverse moves is exactly
  \( \lambda_i \). As in the forward direction, all other entries
  except the pair
  \( (\theta^{(i+1)}_{u+2i}, \theta^{(i+1)}_{u+2i+1}) \) either remain
  at the same position or are shifted by two positions, so the parity
  of all other odd parts is preserved. Since
  \( \theta^{(i+1)}_{u+2i+1} = 0 \), it follows that all odd parts of
  \( \theta^{(i+1)} \) appear an even number of times.

  Finally, for \( \lambda_i \), the claim follows directly from
  \eqref{eq:parity_eq}. By the assumption on \( \theta^{(i)} \),
  \[
    v\cdot\theta^{(i)}_v + (v+1)\cdot\theta^{(i)}_{v+1} \equiv 0 \pmod{2}.
  \]
  Hence by~\eqref{eq:parity_eq}, \( \lambda_i \) is even, completing
  the proof.
\end{proof}

By a simple shifting argument, a similar lemma holds for \(u\) odd.

\begin{lem}\label{lem:pm_parity_odd}
  Let \(u \in 2\Z+1\). Let \(\bla\) be a \(k\)-tuple of partitions.
  For all \( i \in \{s,\dots,0\} \), every even part of
  \( \theta^{(i+1)} \) appears an even number of times and
  \( \lambda_i \) is even if and only if every even part of
  \( \theta^{(i)} \) appears an even number of times.
\end{lem}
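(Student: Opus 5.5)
The plan is to reduce \Cref{lem:pm_parity_odd} to \Cref{lem:pm_parity} by translating every frequency sequence one step to the right or left. This changes the parity of every index, so the conditions ``odd parts appear an even number of times'' and ``even parts appear an even number of times'' are exchanged.

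Write \(\tau\) for the shift operator on generalised frequency sequences, \((\tau f)_i := f_{i-1}\). Then \(\tau\) moves every part \(t\) to \(t+1\).

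\begin{itemize}
\item The rule for particle motion depends only on the relative positions of the entries \(f_u,f_{u+1},f_{u+2},\dots\). Hence \(\ppm^{(m)}_{u+1}(\tau f)=\tau\,\ppm^{(m)}_u(f)\) for every \(u\), \(m\) and \(f\) where the motion is defined: focus shifts and particle motions occur at the same relative steps.
\item Directly from \Cref{def:u_0-stuff}, \(\fs_{u}(\bla)=\tau\,\fs_{u-1}(\bla)\).
\end{itemize}

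Now fix \(u\) odd and let \(\theta^{(i)}\) be the sequences of \Cref{def:theta} built from \(\fs_u(\bla)\). Let \(\eta^{(i)}\) be the analogous sequences built from \(\fs_{u-1}(\bla)\), using \(\ppm_{u-1+2i}^{(\lambda_i)}\). By downward induction on \(i\), using the two facts above, we get \(\theta^{(i)}=\tau\,\eta^{(i)}\) for all \(i\). The base case is \(i=s_1\), and the inductive step is exactly the commutation relation. The same parts \(\lambda_i\) are used in both chains, so the \(\lambda_i\) are unchanged.

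Since \(u-1\) is even, \Cref{lem:pm_parity} applies to the \(\eta\)-chain. It says: every odd part of \(\eta^{(i+1)}\) appears an even number of times and \(\lambda_i\) is even if and only if every odd part of \(\eta^{(i)}\) appears an even number of times. For any \(i\), a part \(t\) of \(\eta^{(i)}\) is odd if and only if the corresponding part \(t+1\) of \(\theta^{(i)}=\tau\eta^{(i)}\) is even, and both occur with the same multiplicity. So ``every odd part of \(\eta^{(i)}\) appears an even number of times'' is equivalent to ``every even part of \(\theta^{(i)}\) appears an even number of times''. Substituting this equivalence into the statement of \Cref{lem:pm_parity} gives \Cref{lem:pm_parity_odd}.

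The only step needing care is the commutation \(\ppm_{u+1}\circ\tau=\tau\circ\ppm_u\). One must check that the hypothesis ``\(f_v+f_{v+1}\le h\) for all \(v\ge u\)'' is itself translation invariant, so that the process is well defined in both settings. One must also check that \Cref{lem:pm_parity} was proved for an arbitrary even \(u\) and not just \(u=0\). Its proof only uses \(u\equiv 0 \pmod 2\), in the congruence \eqref{eq:parity_eq}, so this holds.

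As a cross-check, one could instead redo the proof of \Cref{lem:pm_parity} verbatim. In \eqref{eq:parity_eq} the term \((u+2i)h\) is now \(\equiv h\) rather than \(0\), since \(u+2i\) is odd. This is harmless, because \(u+2i\) is an odd index, so \(h=\theta^{(i+1)}_{u+2i}\) is a frequency of an odd part and is not constrained. The shift argument avoids redoing this bookkeeping.
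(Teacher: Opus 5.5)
Your proposal is correct and is essentially the paper's proof. The paper shifts the \(\theta\)-chain one step to the right, so it becomes the chain for the even index \(u+1\), and applies Lemma~\ref{lem:pm_parity} there; you shift one step to the left, to the chain built from \(\fs_{u-1}(\bla)\). The two directions are interchangeable, and your induction \(\theta^{(i)}=\tau\,\eta^{(i)}\) just spells out what the paper asserts in one line. Your closing cross-check is not needed, and as written it would also need the fact that the moved pair still sums to \(h\) to conclude that the even-indexed entry is \(\equiv\lambda_i \pmod 2\); the main shift argument does not depend on it.
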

\begin{proof}
  Let \( \tilde{\theta}^{(i+1)} \) be the frequency sequence obtained
  by shifting \( \theta^{(i+1)} \) one step to the right, i.e.
  replacing \( \theta^{(i+1)}_j \) by \( \theta^{(i+1)}_{j-1} \) for
  all \(j \in \Z\). Then by the definition of particle motion,
  \(\theta^{(i)}\) is exactly the generalised frequency sequence
  obtained by shifting
  \(\tilde{\theta}^{(i)} =
  \ppm_{u+2i+1}^{(\lambda_{i})}\left(\tilde{\theta}^{(i+1)}\right)\)
  one step to the left. The result follows by applying Lemma
  \ref{lem:pm_parity} to the \( \tilde{\theta}^{(i)} \)'s, as the odd
  parts of \( \tilde{\theta}^{(i)} \) become the even parts of
  \( \theta^{(i)}\).
\end{proof}

Let \( \shift(\mathcal{X}_{j,r,k,u}) \) be the set of all pairs
\( (\bla, \fs_{u-1}(\bla))\), where
\( \bla = (\lambda^{(1)},\dots,\lambda^{(k)}) \) is a
\( k \)-tuple of partitions such that each part of \( \lambda^{(m)} \) is
at least \( m + \max\{m-j,0\} + \max\{m-(k-r), 0\} \) for each
\( m = 1,\dots,k \). In other words,
\[
  \shift(\mathcal{X}_{j,r,k,u}) = \left\{ (\bla^+, \fs_{u-1}(\bla)) : (\bla, \fs_{u}(\bla)) \in \mathcal{X}_{j,r,k,u} \right\}.
\]

The following lemma shows the shift invariance property.
\begin{prop}\label{lem:shift_invar}
  The map \( \Lambda \) induces a bijection between
  \( \shift(\mathcal{X}_{j,r,k,u}) \) and \( \mathcal{Z}_{j,r,k,u} \).
\end{prop}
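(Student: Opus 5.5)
Combining Proposition~\ref{prop:shift} with Proposition~\ref{pro:bij_XZ} should give the result almost immediately. By definition, the map \((\bla, \fs_u(\bla)) \mapsto (\bla^+, \fs_{u-1}(\bla^+))\) is a bijection from \(\mathcal{X}_{j,r,k,u}\) onto \(\shift(\mathcal{X}_{j,r,k,u})\). Indeed, \(\fs_{u-1}(\bla^+) = \fs_{u-1}(\bla)\) because frame sequences depend only on the lengths of the \(\lambda^{(m)}\), and adding \(m\) to every part of \(\lambda^{(m)}\) is invertible. Moreover, adding \(m\) to every part turns the lower bound \(m - j + \max\{m-(k-r),0\}\) into
\[
2m - j + \max\{m-(k-r),0\}.
\]

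I first want to compare this bound with the one in the description of \(\shift(\mathcal{X}_{j,r,k,u})\), namely \(m + \max\{m-j,0\} + \max\{m-(k-r),0\}\). These agree when \(m \geq j\), since \(m + (m-j) = 2m-j\). For \(m < j\), however, the stated bound is \(m\) while the transported bound is \(2m - j < m\). The two descriptions therefore differ as written.

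I expect this to be the main delicate point. There are two candidate fixes. The first is to read the definition of \(\shift(\mathcal{X}_{j,r,k,u})\) as the displayed formula \(\{(\bla^+,\fs_{u-1}(\bla))\}\), so the bound is \(2m-j+\max\{m-(k-r),0\}\); this is the reading I would commit to first. The second is to use that parts are non-negative, so the lower bound is effectively \(\max(0,\cdot)\); but this does not reconcile \(m\) with \(2m-j\) either. So with the first reading, I take the shift map as the bijection.

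With that settled, I would write
\[
\Lambda(\bla^+, \fs_{u-1}(\bla^+)) = \Lambda(\bla, \fs_u(\bla)),
\]
which holds by Proposition~\ref{prop:shift}. The map \(\Lambda\) restricted to \(\shift(\mathcal{X}_{j,r,k,u})\) is then the composition of the inverse shift bijection with \(\Lambda|_{\mathcal{X}_{j,r,k,u}}\). The latter is a bijection onto \(\mathcal{Z}_{j,r,k,u}\) by Proposition~\ref{pro:bij_XZ}, and a composition of bijections is a bijection.

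Finally, I would record the weights. The sequence \(\fs_{u-1}(\bla)\) has weight \(s_1^2+\dots+s_k^2+(u-2)(s_1+\dots+s_k)\). Passing from \(\bla\) to \(\bla^+\) adds \(\sum_m m(s_m-s_{m+1}) = s_1+\dots+s_k\) to the weight of \(\bla\). Hence the total weight of a pair \((\bla^+, \fs_{u-1}(\bla))\) equals that of \((\bla, \fs_u(\bla))\), so the bijection is also weight-preserving, consistently with Proposition~\ref{pro:bij_XZ}.
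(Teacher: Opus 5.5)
Your main argument is correct and is the paper's proof. The correspondence \((\bla,\fs_u(\bla))\mapsto(\bla^+,\fs_{u-1}(\bla))\) identifies \(\mathcal{X}_{j,r,k,u}\) with \(\shift(\mathcal{X}_{j,r,k,u})\), Proposition~\ref{prop:shift} gives \(\Lambda(\bla^+,\fs_{u-1}(\bla))=\Lambda(\bla,\fs_u(\bla))\), and Proposition~\ref{pro:bij_XZ} concludes.

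The one place where you depart from the paper is wrong, though. The two descriptions of \(\shift(\mathcal{X}_{j,r,k,u})\) do coincide, so there is nothing to choose between. Because parts are non-negative, the effective condition on \(\lambda^{(m)}\) in \(\mathcal{X}_{j,r,k,u}\) is that its parts are at least \(\max\{0,\,m-j+\max\{m-(k-r),0\}\}\). Since \(j\le k-r\), the term \(\max\{m-(k-r),0\}\) is positive only when \(m>j\), so
\[
\max\{0,\,m-j+\max\{m-(k-r),0\}\}=\max\{m-j,0\}+\max\{m-(k-r),0\}.
\]
Adding \(m\) yields precisely the stated bound \(m+\max\{m-j,0\}+\max\{m-(k-r),0\}\).

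Your mistake was applying \(\max(0,\cdot)\) after the shift instead of before it. For \(m<j\), the shifted parts are automatically at least \(m\), since they are non-negative integers plus \(m\), so \(2m-j\) is never the binding bound.

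This matters beyond the present statement. The sets \(\mathcal{X}^e_{a,b,k}\) and \(\widetilde{\mathcal{X}}^e_{a,b,k}\) are defined, and their generating functions computed, using the explicit bound. Their bijection with \(\mathcal{Z}\), however, goes through this proposition, which uses the displayed description. The two descriptions therefore must agree, and they do.
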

\begin{proof}
  By definition of \(\shift(\mathcal{X}_{j,r,k,u})\), we have
  \((\bla, \fs_{u}(\bla)) \in \mathcal{X}_{j,r,k,u}\) if and only if
  \((\bla^+, \fs_{u-1}(\bla)) \in \shift(\mathcal{X}_{j,r,k,u})\). By
  Proposition \ref{prop:shift}, for any
  \((\bla^+, \fs_{u-1}(\bla)) \in \shift(\mathcal{X}_{j,r,k,u})\), we
  have
  \( \Lambda(\bla^+, \fs_{u-1}(\bla)) = \Lambda(\bla,
  \fs_{u}(\bla))\). And by \Cref{pro:bij_XZ}, \(\Lambda\) induces a
  bijection between \(\mathcal{X}_{j,r,k,u}\) and
  \( \mathcal{Z}_{j,r,k,u} \). Hence it also induces a bijection
  between \( \shift(\mathcal{X}_{j,r,k,u}) \) and
  \( \mathcal{Z}_{j,r,k,u} \).
\end{proof}

The next lemma follows directly from Lemma \ref{lem:pm_parity_odd}.

\begin{lem}\label{lem:shifted_pm_parity}
  For \( (\bla, \fs_{u-1}(\bla)) \in \shift(\mathcal{X}_{j,r,k,u}) \), let
  \[
    \fs_{u-1}(\bla) =: \theta^{(s_1)} , \theta^{(s_1-1)} , \dots  , \theta^{(1)}, \theta^{(0)} = \Lambda(\bla,\fs_{u-1}(\bla)),
  \]
  where each \( \theta^{(i)} \) is obtained from \( \theta^{(i+1)} \)
  by
  \( \theta^{(i)} =
  \ppm_{u+2i-1}^{(\lambda_{i})}\left(\theta^{(i+1)}\right) \) for
  \( i = s_1 - 1 , \dots,1,0 \) . Then every even part of
  \( \theta^{(i+1)} \) appears an even number of times and
  \( \lambda_i \) is even if and only if every even part of
  \( \theta^{(i)} \) appears an even number of times.
\end{lem}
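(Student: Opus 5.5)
The statement to prove is Lemma~\ref{lem:shifted_pm_parity}. The plan is to reduce it directly to Lemma~\ref{lem:pm_parity_odd}. That lemma is stated for the sequence $\theta^{(i)}$ of Definition~\ref{def:theta}, obtained from $\fs_{u'}(\bla)$ with $u'$ odd. Here the starting sequence is $\fs_{u-1}(\bla)$ and the motions are performed at the indices $u-1+2i$, so the sequence $\theta^{(i)}$ of the present statement is exactly the sequence of Definition~\ref{def:theta} with $u$ replaced by $u' := u-1$. Note that the recursion $\theta^{(i)}=\ppm_{u'+2i}^{(\lambda_i)}(\theta^{(i+1)})$ is literally the one written in the statement.

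The argument then depends on the parity of $u'$. If $u'=u-1$ is odd, Lemma~\ref{lem:pm_parity_odd} applies verbatim and gives the equivalence between the two conditions. Those conditions are: ``every even part of $\theta^{(i+1)}$ appears an even number of times and $\lambda_i$ is even'', and ``every even part of $\theta^{(i)}$ appears an even number of times''.

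Neither Lemma~\ref{lem:pm_parity} nor Lemma~\ref{lem:pm_parity_odd} used anything about $\bla$ beyond the pair-structure hypotheses of Proposition~\ref{pro:explicit_pm}. Those hypotheses are as follows. At step $i$, the pair at positions $(u'+2i,u'+2i+1)$ is $(h,0)$ with $h\ge1$. All pair sums to the right are at most $h$. The frequencies at positions $u'+2i$ and beyond are those of the frame sequence, possibly shifted by motions already performed further right. These conditions hold for any $k$-tuple, in particular for $\bla^+$ and hence for elements of $\shift(\mathcal{X}_{j,r,k,u})$. So the membership condition on $\bla$ plays no role, and the lemma is a pure specialisation.

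The only point needing care is the implicit parity of $u$. In the paper's intended use ($\mathcal{Z}^o$ and the even theorems with parts $\ge 0$ or $\ge -1$), $u$ should be even, so that $u-1$ is odd. If $u$ were odd, the same argument would instead invoke Lemma~\ref{lem:pm_parity} and would produce the statement with ``odd parts'' in place of ``even parts''. I would therefore state the proof under the standing assumption $u\in 2\Z$, i.e.\ $u-1\in 2\Z+1$. This index bookkeeping is the main obstacle, and it is a minor one. Beyond it, I would only check that the equation \eqref{eq:parity_eq} argument transfers unchanged after the shift, which it does because both lemmas rely only on Proposition~\ref{pro:explicit_pm} and on the weight identity $|\theta^{(i)}|=|\theta^{(i+1)}|+\lambda_i$.
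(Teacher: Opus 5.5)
Your proposal is correct and matches the paper's argument: the paper just notes that this lemma follows directly from Lemma~\ref{lem:pm_parity_odd}, applied with $u-1$ in place of $u$, which is exactly your specialisation $u'=u-1$. Your caveat that $u$ must be even (so that $u-1$ is odd) is genuinely needed. For odd $u$ the statement already fails with $u=1$, $k=1$, $j=1$, $r=0$, $\lambda_0=1$: here $\theta^{(1)}$ has $f_0=1$, while $\theta^{(0)}$ has no even parts. The caveat is harmless, because the lemma is only used with $u=0$, i.e.\ with $\fs_{-1}$.
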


We saw in Proposition \ref{pro:bij_XZ} that \(\Lambda\) induces a
weight-preserving bijection between \( \mathcal{X}_{j,r,k,u} \) and
\( \mathcal{Z}_{j,r,k,u} \). However \( \mathcal{Z}_{j,r,k,u} \) is
not immediately a set of frequency sequences whose generating function
is well-known. Hence we do one last simple bijection to relate \( \mathcal{Z}_{j,r,k,0} \) it to \( \mathcal{Y}_{j,r,k,0} \), whose generating function can be nicely expressed  as sums of products thanks to the Andrews--Gordon identities.

\begin{prop}[Generalisation of \cite{DJJ25}, Proposition~6.7]
  \label{prop:bij_YZ}
  There exists a weight-preserving bijection from \( \mathcal{Y}_{j,r,k,0} \) to
  \( \mathcal{Z}_{j,r,k,0} \) via the map \( \phi \) defined by
  \[
    (\dots,0,f_0, f_{1},f_{2}, \dots) \mapsto  (\dots,0 ,f'_0, f_{1},f_{2}, \dots),
  \]
  where if \( j \geq r \), then
  \[
    f_0' =
    \begin{cases}
      f_0 & \mbox{if \( f_0 \leq j-r \)},\\
      j-r+\ell & \mbox{if \( f_0 = j-r + 2 \ell \) for some \( \ell \in \{1,\dots,r\} \),}
    \end{cases}
  \]
  and if \( j < r \), then
  \[
    f_0' = \ell, \quad\text{for \( f_0 = r-j+2 \ell \)
      with \( \ell \in \{0,\dots,j\}\).}
  \]
\end{prop}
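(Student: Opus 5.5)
The plan is to prove the statement directly and combinatorially. The map \(\phi\) only changes the frequency \(f_0\) of the part \(0\), and parts equal to \(0\) contribute nothing to the weight. So \(\phi\) is automatically weight-preserving, and what has to be shown is that it is a bijection from \(\mathcal{Y}_{j,r,k,0}\) onto \(\mathcal{Z}_{j,r,k,0}\). I would do this fibre by fibre. Fix a tail \((f_1,f_2,\dots)\) with \(f_i+f_{i+1}\leq k\) for all \(i\geq 1\) and \(f_i=0\) for \(i<0\). Let \(S\) be the set of allowed values of \(f_0\) in \(\mathcal{Y}_{j,r,k,0}\), namely \(\{\ell+\max\{\ell-(j-r),0\} : 0\leq \ell\leq j\}\). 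It then suffices to prove two things:
\begin{itemize}
\item \(\phi\) restricted to \(S\) is a strictly increasing bijection onto \(\{0,1,\dots,j\}\);
\item for every \(x\in S\), \(x+f_1\leq k\) holds if and only if \(y=\phi(x)\) satisfies the \(\mathcal{Z}\)-conditions, i.e. \(y+f_1\leq k\) and \(y\leq j-\max\{y+f_1-(k-r),0\}\).
\end{itemize}

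For the first point, I would compute \(S\) explicitly in the two cases. If \(j\geq r\), then \(S=\{0,1,\dots,j-r\}\cup\{j-r+2\ell : 1\leq \ell\leq r\}\), because for \(\ell=j-r+m\) the value is \(j-r+2m\). If \(j<r\), then \(S=\{r-j+2\ell : 0\leq \ell\leq j\}\). With this description, the formula for \(f_0'\) in the statement is visibly an increasing bijection \(S\to\{0,\dots,j\}\), and it is well defined precisely because of this description of \(S\). Note also that the \(\mathcal{Z}\)-condition already forces \(y\leq j\), so the image lies in the right range.

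The second point is the main computation, and I expect it to be the only real content; it is done by cases. In each case, the \(\mathcal{Z}\)-conditions on \(y\) amount to \(y\leq j\), \(y+f_1\leq k\), and \(2y+f_1\leq k-r+j\) whenever \(y+f_1>k-r\).
\begin{enumerate}
\item Case \(j\geq r\) and \(x\leq j-r\), so \(y=x\). The \(\mathcal{Z}\)-conditions imply \(x+f_1\leq k\) trivially. Conversely, suppose \(x+f_1\leq k\) and \(x+f_1>k-r\). Then \(2x+f_1=x+(x+f_1)\leq (j-r)+k\), which is the required bound.
\item Case \(j\geq r\) and \(x=j-r+2\ell\), so \(y=j-r+\ell\). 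Here \(x+f_1=2y+f_1-(j-r)\), so \(x+f_1\leq k\) is exactly \(2y+f_1\leq k-r+j\). This in turn gives \(y+f_1\leq k-r+j-y = k-\ell < k\), so all \(\mathcal{Z}\)-conditions hold. Conversely, if \(y+f_1>k-r\) the \(\mathcal{Z}\)-condition gives \(2y+f_1\leq k-r+j\) directly. If \(y+f_1\leq k-r\), then \(2y+f_1\leq k-r+y\leq k-r+j\), and the same equivalence applies.
\item Case \(j<r\), with \(x=r-j+2\ell\) and \(y=\ell\). Here \(x+f_1=2y+f_1+(r-j)\), so \(x+f_1\leq k\) is again equivalent to \(2y+f_1\leq k-r+j\). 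The same two-subcase argument (on whether \(y+f_1\leq k-r\) or not) shows that this is equivalent to the \(\mathcal{Z}\)-conditions.
\end{enumerate}

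Since in each fibre \(\phi\) is a bijection between the admissible \(f_0\)-values on both sides, and the tail is untouched, \(\phi\) is a weight-preserving bijection \(\mathcal{Y}_{j,r,k,0}\to\mathcal{Z}_{j,r,k,0}\). The main obstacle is bookkeeping rather than ideas: getting the explicit form of \(S\) right, and handling the \(\max\) in the definition of \(\mathcal{Z}_{j,r,k,0}\) by splitting on whether \(y+f_1\) exceeds \(k-r\).
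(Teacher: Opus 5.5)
Your proof is correct and complete. The paper does not actually prove this proposition. It imports it from \cite[Proposition~6.7]{DJJ25}, remarking only that the arguments there never use the starting index. So there is no in-paper argument to compare with, and your fibrewise verification is a self-contained substitute.

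Because \(\phi\) only changes \(f_0\), the problem reduces to two checks, and you handle both correctly.
\begin{enumerate}
\item \(\phi\) maps \(S=\{\ell+\max\{\ell-(j-r),0\}:0\le\ell\le j\}\) bijectively onto \(\{0,\dots,j\}\). This set contains every admissible \(f_0\) in \(\mathcal{Z}_{j,r,k,0}\), since \(f_0\le j-\max\{\cdot\}\le j\).
\item For \(x\in S\), the condition \(x+f_1\le k\) is equivalent to the \(\mathcal{Z}\)-condition on \(\phi(x)\). Your identities \(x+f_1=2\phi(x)+f_1-(j-r)\) in the case \(j\ge r\), \(x>j-r\), and \(x+f_1=2\phi(x)+f_1+(r-j)\) in the case \(j<r\), are exactly what is needed. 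Splitting on whether \(\phi(x)+f_1\) exceeds \(k-r\) correctly handles the \(\max\).
\end{enumerate}

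One small point should be written out. In the case \(j<r\), the forward direction also requires \(\phi(x)+f_1\le k\). This follows from
\[
\phi(x)+f_1\le 2\phi(x)+f_1\le k-r+j<k,
\]
whereas your ``same two-subcase argument'' only covers the converse direction.

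As a by-product, your argument never uses the hypothesis \(j+r\le k\), so this particular bijection holds without it.
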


\section{Proof of Theorem \ref{thm:main} and Corollary \ref{cor:odd}}
\label{sec:proofmain}
In this section, we prove Theorem \ref{thm:main}.
First, we define a subset  of $\mathcal{X}_{j,r,k,0}$ with additional parity conditions.
\begin{defn}\label{def:Xo}
  Let \( j,r \), and \( k \) be non-negative integers with
  \( j+r \leq k \). Define \( \mathcal{X}^o_{j,r,k} \) to be the set
  of all pairs \( (\bla,\fs_0(\bla)) \), where
  \( \bla = (\lambda^{(1)},\dots,\lambda^{(k)}) \) is a
  \( k \)-tuple of partitions, such that for all $m \in \{1, \dots , k\}$,
  \begin{itemize}
  \item each part of \( \lambda^{(m)} \) is at least
    \( m - j + \max\{m-(k-r), 0\} \) for each \( m = 1,\dots,k \),
  \item \( \lambda^{(m)} \) is a partition into even parts.
  \end{itemize}

\end{defn}

Let us start by computing the generating function for \( \mathcal{X}^o_{j,r,k} \).

\begin{prop}\label{pro:1}
Let \( j,r \), and \( k \) be non-negative integers with
  \( j+r \leq k \). Then
  \[
    \sum_{(\bla,\fs_0(\bla)) \in \mathcal{X}_{j,r,k}^o} q^{|(\bla,\fs_0(\bla))|} 
    = \sum_{s_1 \geq \cdots \geq s_k \geq 0}\frac{q^{s_1^2 + \cdots + s_k^2 - (s_1 + \cdots + s_j) + (s_{j+1} - s_{j+2} + s_{j+3} - \cdots \pm s_{k-r}) + (s_{k-r+1} + \cdots + s_k)}}{(q^2;q^2)_{s_1 - s_2} \cdots (q^2;q^2)_{s_{k-1} - s_k} (q^2;q^2)_{s_k}}.
  \]
\end{prop}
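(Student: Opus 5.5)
The plan is a direct count: fix the lengths, write down the generating function of each component, and collect exponents by Abel summation. No particle motion is needed at this stage.

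\textbf{Setup.} Fix $s_1\geq\cdots\geq s_k\geq 0$ and set $s_{k+1}=0$. Then $\fs_0(\bla)$ is determined, and by Definition~\ref{def:u_0-stuff} with $u=0$ it has weight $s_1^2+\cdots+s_k^2-(s_1+\cdots+s_k)$. The pair $(\bla,\fs_0(\bla))$ has weight $|\fs_0(\bla)|+|\bla|$. The only freedom left is the choice of each $\lambda^{(m)}$, which are independent. By Definition~\ref{def:Xo}, $\lambda^{(m)}$ is a partition into exactly $s_m-s_{m+1}$ even, non-negative parts, each at least
\[
c_m:=m-j+\max\{m-(k-r),0\}.
\]
Let $c'_m$ be the smallest even non-negative integer that is $\geq c_m$. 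Subtracting $c'_m$ from every part gives a partition into $s_m-s_{m+1}$ non-negative even parts. Hence the generating function of $\lambda^{(m)}$ is
\[
\frac{q^{(s_m-s_{m+1})c'_m}}{(q^2;q^2)_{s_m-s_{m+1}}}.
\]
Multiplying over $m$ produces the denominator in the statement, so only the exponent remains to be checked.

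\textbf{Reduction of the exponent.} The extra exponent is $\sum_m (s_m-s_{m+1})c'_m$. Setting $c'_0:=0$ and applying Abel summation, it equals $\sum_{m=1}^k s_m(c'_m-c'_{m-1})$. Adding $-(s_1+\cdots+s_k)$ from the frame sequence, it suffices to show that $c'_m-c'_{m-1}-1$ equals $-1$ for $m\leq j$, equals $(-1)^{m-j-1}$ for $j<m\leq k-r$, and equals $+1$ for $m>k-r$.

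\textbf{Computing the $c'_m$.}
\begin{itemize}
\item For $m\leq j$: $c_m\leq 0$ (as $m\leq j\leq k-r$), so $c'_m=0$ and the coefficient is $-1$.
\item For $j<m\leq k-r$: $c_m=m-j$, so $c'_m=m-j$ if $m-j$ is even and $m-j+1$ if it is odd. The successive differences $c'_m-c'_{m-1}$ are therefore $2,0,2,0,\dots$ starting at $m=j+1$. This gives the alternating coefficients $+1,-1,+1,\dots$ ending with $\pm s_{k-r}$.
\item For $m>k-r$: $c_m=2m-j-(k-r)$ has constant parity, so $c'_m$ increases by exactly $2$ at each step with $m\geq k-r+2$, giving coefficient $+1$.
\end{itemize}

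\textbf{Main obstacle: the boundary indices.} The only delicate points are the transitions at $m=j+1$ and $m=k-r+1$, and the degenerate cases $j=0$, $r=0$, $j=k-r$.
\begin{itemize}
\item At $m=j+1$: $c'_{j+1}=2$ and $c'_j=0$, so the coefficient is $+1$.
\item At $m=k-r+1$: split according to the parity of $k-r-j$. If $k-r-j$ is even, then $c'_{k-r}=k-r-j$ and $c'_{k-r+1}=k-r-j+2$. If $k-r-j$ is odd, then $c'_{k-r}=k-r-j+1$ and $c'_{k-r+1}=k-r-j+3$. In both cases the difference is $2$, so the coefficient of $s_{k-r+1}$ is $+1$.
\item When $j=k-r$, the middle range is empty. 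One checks directly that $c'_{j+1}=2$, so the coefficient of $s_{j+1}$ is $+1$, which is consistent with the formula.
\end{itemize}

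Summing over all $s_1\geq\cdots\geq s_k\geq 0$ then gives the stated identity.
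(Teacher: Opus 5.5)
Your proof is correct and follows essentially the same route as the paper: fix the lengths $s_m-s_{m+1}$, use the frame-sequence weight $s_1^2+\cdots+s_k^2-(s_1+\cdots+s_k)$, and multiply the generating functions $q^{\ell c'_m}/(q^2;q^2)_\ell$ (with $\ell=s_m-s_{m+1}$) of the independent components. Your Abel-summation bookkeeping through the differences $c'_m-c'_{m-1}$ is a tidier way of organising the paper's computation of $A_1A_2A_3$, which splits on the parity of $k-r-j$; in your version that parity only matters at the single boundary index $m=k-r+1$.
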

\begin{proof}
  Note that the generating function for partitions of length
  \( \ell \) into even parts of size at least \( b \) is 
  \[
    \begin{cases*}
      \frac{q^{b \ell}}{(q^2;q^2)_\ell}& \mbox{if \( b \) is even},\\
      \frac{q^{(b+1) \ell}}{(q^2;q^2)_\ell}& \mbox{if \( b \) is odd.}
    \end{cases*}
  \]

  For non-negative integers \( s_1,\dots,s_k \) with
  \( s_1 \ge \cdots \ge s_k \geq 0 \), define
  \( X_{j,r}^o(s_1,\dots,s_k) \) to be the set of \( k \)-tuples of
  partitions \( \bla = (\lambda^{(1)},\dots,\lambda^{(k)}) \) such
  that \((\bla,\fs_0(\bla)) \in \mathcal{X}_{j,r,k}^o\) and for each
  \( m \), the length of \( \lambda^{(m)} \) is \( s_m - s_{m+1} \).
  Then \(\fs_0(\bla)\) is the same for all
  \(\bla \in X_{j,r}^o(s_1,\dots,s_k)\); let us denote it by
  \( \fs_0(s_1,\dots,s_k) \).
  
  Recall that
  \begin{equation}\label{eq:size_fs}
    |\fs_0(s_1, \dots, s_k)| = s_1^2 + \cdots + s_k^2 - (s_1 + \cdots + s_k).
  \end{equation}

  We can express \( \mathcal{X}_{j,r,k}^o \) as
  \begin{equation}\label{eq:Xo_union}
    \mathcal{X}_{j,r,k}^o=\bigsqcup_{s_1 \geq \dots \geq s_k \geq 0} X_{j,r}^o(s_1,\dots,s_k) \times \{\fs_0(s_1,\dots,s_k)\}.
  \end{equation}

 To compute the generating function for $X_{j,r}^o(s_1, \dots, s_k)$, we distinguish the different cases in $\max\{m-(k-r), 0\}$. We obtain
$$ \sum_{ \bla \in X_{j,r}^o(s_1, \dots, s_k) } q^{|\bla|}
    = A_1 \cdot A_2 \cdot A_3,$$
with
\begin{align*}
A_1 &= \sum q^{|\lambda^{(1)}| + \cdots + |\lambda^{(j)}|},\\
A_2 &= \sum q^{|\lambda^{(j+1)}| + \cdots + |\lambda^{(k-r)}|},\\
A_3 &= \sum q^{|\lambda^{(k-r+1)}| + \cdots + |\lambda^{(k)}|},
\end{align*}
where the sums run respectively on all tuples of partitions $(\lambda^{(1)}, \dots, \lambda^{(j)})$, $(\lambda^{(j+1)}, \dots , \lambda^{(k-r)})$, and $(\lambda^{(k-r+1)}, \dots, \lambda^{(k)})$ such that $(\lambda^{(1)},\dots,\lambda^{(k)}) \in X_{j,r}^o(s_1,\dots,s_k)$.

  For all $m \in \{1, \dots, j\}$, \( \lambda^{(m)} \) is a partition into $s_m - s_{m+1}$ even parts at least
  \( 0 \). Thus
  \begin{align} \label{eq:Xo_term1}
    A_1 = \prod_{ m = 1 }^{ j } \frac{1}{(q^2;q^2)_{s_m - s_{m+1}}}.
  \end{align}

For the computation of $A_2$ and $A_3$, we need to distinguish two cases.
First assume that \( k-r-j \) is even (the case \( k-r-j \) odd will follow). 

For all $m \in \{j+1, \dots, k-r\}$,
  \( \lambda^{(m)} \) is a partition into $s_m - s_{m+1}$ even parts at least \( m-j \), hence
  \begin{multline}\label{eq:Xo_term2}
    A_2 = \frac{q^{2(s_{j+1} - s_{j+2}) + 2(s_{j+2} - s_{j+3}) + 4(s_{j+3} - s_{j+4}) + 4(s_{j+4} - s_{j+5}) + \cdots + (k-r-j)(s_{k-r-1} - s_{k-r}) + (k-r-j)(s_{k-r} - s_{k-r+1})}}{(q^2;q^2)_{s_{j+1} - s_{j+2}}(q^2;q^2)_{s_{j+2} - s_{j+3}} \cdots (q^2;q^2)_{s_{k-r} - s_{k-r+1}}} \\
    = \frac{q^{2s_{j+1} + 2s_{j+3} + \cdots + 2s_{k-r-1} - (k-r-j)s_{k-r+1}}}{(q^2;q^2)_{s_{j+1} - s_{j+2}}(q^2;q^2)_{s_{j+2} - s_{j+3}} \cdots (q^2;q^2)_{s_{k-r} - s_{k-r+1}}}.
  \end{multline}
  
  For all $m \in \{k-r+1, \dots , k\}$,  \( \lambda^{(m)} \) is a partition into even parts at least
  \( 2m - k + r -j \), hence
  \begin{align} \label{eq:Xo_term3}
    A_3 = \prod_{ m = k-r+1 }^{ k } \frac{q^{(2m-k+r-j)(s_m - s_{m+1})}}{(q^2;q^2)_{s_m - s_{m+1}}} = \frac{q^{(k-r-j+2)s_{k-r+1}} q^{2 s_{k-r+2} + 2s_{k-r+3} + \cdots + 2 s_k}}{(q^2;q^2)_{s_{k-r+1} - s_{k-r+2}} \cdots (q^2;q^2)_{s_{k-1} - s_k} (q^2;q^2)_{s_k}}.
  \end{align}
  By combining \eqref{eq:Xo_term1}, \eqref{eq:Xo_term2}, and
  \eqref{eq:Xo_term3}, we obtain
  \[
    \sum_{ \bla \in X_{j,r}^o(s_1, \dots, s_k) } q^{|\bla|} = \frac{q^{2s_{j+1} + 2s_{j+3} + \cdots + 2s_{k-r-1} + 2s_{k-r+1} + 2s_{k-r+2} + \cdots + 2s_k}}{(q^2;q^2)_{s_1 - s_2} \cdots (q^2;q^2)_{s_{k-1} - s_k} (q^2;q^2)_{s_k}}.
  \]
 Combining this with \eqref{eq:size_fs} and \eqref{eq:Xo_union}, we obtain the desired formula.

 Now we treat the case where \( k-r-j \) is odd. All arguments remain the
  same, except that equations~\eqref{eq:Xo_term2} and
  \eqref{eq:Xo_term3} are replaced by
  \begin{multline*}
    \frac{q^{2(s_{j+1} - s_{j+2}) + 2(s_{j+2} - s_{j+3}) + 4(s_{j+3} - s_{j+4}) + 4(s_{j+4} - s_{j+5}) + \cdots + (k-r-j-1)(s_{k-r-1} - s_{k-r}) + (k-r-j+1)(s_{k-r} - s_{k-r+1})}}{(q^2;q^2)_{s_{j+1} - s_{j+2}}(q^2;q^2)_{s_{j+2} - s_{j+3}} \cdots (q^2;q^2)_{s_{k-r} - s_{k-r+1}}} \\
    = \frac{q^{2s_{j+1} + 2s_{j+3} + \cdots + 2s_{k-r}} q^{- (k-r-j+1)s_{k-r+1}}}{(q^2;q^2)_{s_{j+1} - s_{j+2}}(q^2;q^2)_{s_{j+2} - s_{j+3}} \cdots (q^2;q^2)_{s_{k-r} - s_{k-r+1}}},
  \end{multline*}
  and
  \[
    \prod_{ m = k-r+1 }^{ k } \frac{q^{(2m-k+r-j+1)(s_m - s_{m+1})}}{(q^2;q^2)_{s_m - s_{m+1}}} = \frac{q^{(k-r-j+3)s_{k-r+1}} q^{2 s_{k-r+2} + 2s_{k-r+3} + \cdots + 2 s_k}}{(q^2;q^2)_{s_{k-r+1} - s_{k-r+2}} \cdots (q^2;q^2)_{s_{k-1} - s_k} (q^2;q^2)_{s_k}},
  \]
  respectively.
  Hence, we obtain again
  \[
    \sum_{ \bla \in X_{j,r}^o(s_1, \dots, s_k) } q^{|\bla|} = \frac{q^{2s_{j+1} + 2s_{j+3} + \cdots + 2s_{k-r} + 2s_{k-r+1} + 2s_{k-r+2} + \cdots + 2s_k}}{(q^2;q^2)_{s_1 - s_2} \cdots (q^2;q^2)_{s_{k-1} - s_k} (q^2;q^2)_{s_k}},
  \]
 and the result follows.
  
\end{proof}

Now we define a set \( \mathcal{Y}^o_{j,r,k} \) which  will be in bijection with \( \mathcal{Z}^o_{j,r,k} \) and  \( \mathcal{X}^o_{j,r,k} \), and whose generating function is expressed nicely as a sum of products.

\begin{defn}\label{def:Yo}
  For non-negative integers \( j,r \), and \( k \) with
  \( j+r \leq k \), define \( \mathcal{Y}^o_{j,r,k} \) to be the set
  of all frequency sequences \( (f_i)_{i \in \Z} \) such that
  \begin{itemize}
  \item  \( f_i + f_{i+1} \leq k \) for all \( i \geq 0 \),
  \item \(f_i=0\) for all \(i<0\),
  \item \( f_0 \in \{\ell+\max\{\ell-(j-r),0 \} : 0 \leq \ell \leq j\} \), and
  \item \( f_i \) is even if \( i \) is odd. That is, odd parts appear an even number of times.
  \end{itemize}
\end{defn}

Now we prove that these three sets are indeed in bijection.

\begin{prop}\label{prop:bij_XZ_odd}
  There exists a weight-preserving bijection between
  \( \mathcal{X}_{j,r,k}^o \) and \( \mathcal{Z}^o_{j,r,k} \).
\end{prop}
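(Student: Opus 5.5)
The bijection will be the restriction of $\Lambda$. By \Cref{pro:bij_XZ} with $u=0$, $\Lambda$ is a weight-preserving bijection from $\mathcal{X}_{j,r,k,0}$ to $\mathcal{Z}_{j,r,k,0}$. Here $\mathcal{X}^o_{j,r,k}\subseteq \mathcal{X}_{j,r,k,0}$ is the subset where every part of every $\lambda^{(m)}$ is even. So it suffices to prove two things:
\begin{enumerate}
\item $\mathcal{Z}^o_{j,r,k}$ is exactly the subset of $\mathcal{Z}_{j,r,k,0}$ in which odd parts appear an even number of times;
\item for $(\bla,\fs_0(\bla))\in\mathcal{X}_{j,r,k,0}$, all parts of $\bla$ are even if and only if every odd part of $\Lambda(\bla,\fs_0(\bla))$ appears an even number of times.
\end{enumerate}

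For the first point I unwind the condition in $\mathcal{Z}_{j,r,k,0}$, namely $f_0\le j-\max\{f_0+f_1-(k-r),0\}$. If $f_0+f_1\le k-r$, it reads $f_0\le j$. Otherwise it reads $2f_0+f_1\le k-r+j$. In the first case, $f_0\le j$ gives $2f_0+f_1\le f_0+(k-r)\le k-r+j$. In the second case, $f_0+f_1>k-r$ together with $2f_0+f_1\le k-r+j$ forces $f_0<j$. Hence the condition is equivalent to the pair "$f_0\le j$ and $2f_0+f_1\le k-r+j$". Adding the parity requirement on odd indices gives exactly the definition of $\mathcal{Z}^o_{j,r,k}$.

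For the second point I use the decomposition of \Cref{def:theta} with $u=0$, which is even, so \Cref{lem:pm_parity} applies. The starting sequence $\theta^{(s_1)}=\fs_0(\bla)$ has nonzero frequencies only at even indices, so odd parts appear zero times and the parity property holds trivially.
\begin{itemize}
\item Forward direction: by downward induction on $i$, if all $\lambda_i$ are even, the "if" direction of \Cref{lem:pm_parity} propagates the property from $\theta^{(i+1)}$ to $\theta^{(i)}$, down to $\theta^{(0)}=\Lambda(\bla,\fs_0(\bla))$.
\item Converse: suppose $\theta^{(0)}$ has the property. I would like to apply the reverse direction of \Cref{lem:pm_parity} successively to get each $\theta^{(i+1)}$ has the property and $\lambda_i$ is even, running upward from $i=0$. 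The lemma is stated as a biconditional, but read literally its hypotheses involve $\theta^{(i+1)}$.
\end{itemize}

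The main obstacle is this converse, since I must make sure the lemma's reverse implication is genuinely "$\theta^{(i)}$ good $\Rightarrow$ $\theta^{(i+1)}$ good and $\lambda_i$ even". The lemma's proof does establish exactly that, by inverting the particle motion and using \eqref{eq:parity_eq}. I would cite that argument explicitly, noting that $\theta^{(i+1)}$ is recovered from $\theta^{(i)}$ by reverse particle motion. If one prefers to avoid this, there is an alternative route. Since $\theta^{(s_1)}$ is always good, one can use the forward direction together with \eqref{eq:parity_eq} directly: if some $\lambda_i$ were odd, take the smallest such $i$ in the order of application (the largest index $i$). Then $\theta^{(i+1)}$ is good, and \eqref{eq:parity_eq} forces $\theta^{(i)}_v$ or $\theta^{(i)}_{v+1}$ at an odd position to be odd. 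One then needs this odd entry to persist through later moves. Later motions only shift entries by two or alter the moving pair, and by \Cref{pro:explicit_pm} the moving pair lands strictly left of $v$. This persistence is the delicate check, so I would rely primarily on the reverse-motion argument already given in \Cref{lem:pm_parity}.

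Combining the two points, $\Lambda$ restricts to a weight-preserving bijection from $\mathcal{X}^o_{j,r,k}$ onto $\mathcal{Z}^o_{j,r,k}$.
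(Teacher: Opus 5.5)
Your proposal is correct and follows the paper's argument: restrict the bijection $\Lambda$ of Proposition~\ref{pro:bij_XZ}, propagate parity forward from $\fs_0(\bla)$ via Lemma~\ref{lem:pm_parity}, and run the lemma's reverse implication upward from $\theta^{(0)}$. That reverse implication is exactly what the lemma's proof establishes, reading \eqref{eq:f_bar} backwards and using \eqref{eq:parity_eq}. Your explicit check that $\mathcal{Z}^o_{j,r,k}$ is precisely the parity-restricted part of $\mathcal{Z}_{j,r,k,0}$ is a detail the paper leaves implicit.

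Drop the fallback route, though. Its claim that later moving pairs land strictly left of $v$ is false. For $k=2$, $\lambda^{(1)}=(1)$ and $\lambda^{(2)}=(5)$, the pair from index $2$ ends at $(2,3)$, but the pair from index $0$ ends at $(3,4)$. The fallback is not needed anyway.
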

\begin{proof}
  Since \( \mathcal{X}_{j,r,k}^o \subseteq \mathcal{X}_{j,r,k,0} \) and
  \( \mathcal{Z}_{j,r,k}^o \subseteq \mathcal{Z}_{j,r,k,0} \), we will show
  that the bijection
  \( \Lambda : \mathcal{X}_{j,r,k,0} \to \mathcal{Z}_{j,r,k,0} \) (see
  \Cref{pro:bij_XZ}) restricts to a bijection
  \( \Lambda: \mathcal{X}_{j,r,k}^o \to \mathcal{Z}^o_{j,r,k} \).

Let \( (\bla,\fs_0(\bla)) \in \mathcal{X}_{j,r,k}^o \).   We
  use the notation of \Cref{def:theta}. By the definition of
  \( \mathcal{X}_{j,r,k}^o \), every \( \lambda_i \) is even. Thus, by
  iterating \Cref{lem:pm_parity}, it follows that
  \( \theta^{(s_1 -1)}, \dots, \theta^{(1)}\), and
  \( \theta^{(0)} = \Lambda(\bla,\fs_0(\bla)) \) all satisfy the
  condition that every odd part appears an even number of times.
  Therefore, the image of \( \mathcal{X}_{j,r,k}^o \) under
  \( \Lambda \) satisfies the parity condition of
  \( \mathcal{Z}_{j,r,k}^o \), and hence
  \( \Lambda(\mathcal{X}_{j,r,k}^o) \subseteq \mathcal{Z}_{j,r,k}^o
  \).
  For the reverse inclusion, \Cref{lem:pm_parity} also shows that the inverse map $\Lambda^{-1}$
  sends \( \mathcal{Z}_{j,r,k}^o \) into \( \mathcal{X}_{j,r,k}^o \),
  that is,
  \( \Lambda^{-1}(\mathcal{Z}_{j,r,k}^o) \subseteq
  \mathcal{X}_{j,r,k}^o \), which completes the proof.
\end{proof}

\begin{prop}\label{prop:bij_YZ_odd}
  There exists a weight-preserving bijection between
  \( \mathcal{Y}^o_{j,r,k} \) and \( \mathcal{Z}^o_{j,r,k} \).
\end{prop}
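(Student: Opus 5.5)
The plan is to restrict the map \(\phi\) of \Cref{prop:bij_YZ} from \(\mathcal{Y}_{j,r,k,0}\to\mathcal{Z}_{j,r,k,0}\) to the parity-restricted subsets. Three things need checking: that \(\mathcal{Y}^o_{j,r,k}\) and \(\mathcal{Z}^o_{j,r,k}\) are exactly the subsets of \(\mathcal{Y}_{j,r,k,0}\) and \(\mathcal{Z}_{j,r,k,0}\) cut out by the condition ``\(f_i\) even for all odd \(i\)'', that \(\phi\) preserves this condition in both directions, and that the result is weight-preserving.

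\textbf{Identifying the sets.} For \(\mathcal{Y}^o_{j,r,k}\) this is immediate from \Cref{def:Yo} and \Cref{def:XYZ} with \(u=0\). For \(\mathcal{Z}^o_{j,r,k}\) one compares the condition \(f_0\le j\), \(2f_0+f_1\le k-r+j\) of \Cref{thm:main} with \(f_0\le j-\max\{f_0+f_1-(k-r),0\}\). The latter is equivalent to the conjunction of \(f_0\le j\) and \(f_0\le j-(f_0+f_1-(k-r))\), and the second inequality is exactly \(2f_0+f_1\le k-r+j\). So the two sets agree, and \(\mathcal{Z}^o_{j,r,k}=\mathcal{Z}_{j,r,k,0}\cap\{f_i \text{ even for odd } i\}\).

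\textbf{Preserving parity.} The map \(\phi\) changes only \(f_0\), and \(0\) is an even index, so the parity condition on the odd indices \(i\ge1\) is untouched in both directions. The remaining constraints (\(f_i+f_{i+1}\le k\) for \(i\ge0\), with \(f_1\) unchanged) are already handled by \Cref{prop:bij_YZ}. Hence \(\phi\) maps \(\mathcal{Y}^o_{j,r,k}\) into \(\mathcal{Z}^o_{j,r,k}\), and \(\phi^{-1}\) maps \(\mathcal{Z}^o_{j,r,k}\) into \(\mathcal{Y}^o_{j,r,k}\). Weight preservation holds because parts equal to \(0\) contribute nothing to \(|f|\), and \(f_0\) is the only entry that changes.

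\textbf{Main point of care.} The only real work is confirming that \Cref{prop:bij_YZ} holds as stated with no hidden use of the absent parity condition. In the \(j\ge r\) case, with \(f_0=j-r+2\ell\) and \(f_0'=j-r+\ell\), one must check that \(f_0+f_1\le k\) if and only if \(2f_0'+f_1\le k-r+j\). This amounts to the identity \(2(j-r+\ell)+f_1=(j-r+2\ell+f_1)+(j-r)\), which is parity-free. So the restriction goes through directly.
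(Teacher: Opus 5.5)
Your proposal is correct and follows the paper's proof: restrict the bijection $\phi$ of \Cref{prop:bij_YZ} (with $u=0$), which changes only $f_0$ and so leaves every odd-index frequency, and hence the parity condition, untouched. Checking that $f_0\le j-\max\{f_0+f_1-(k-r),0\}$ is equivalent to $f_0\le j$ and $2f_0+f_1\le k-r+j$ is a useful detail the paper leaves implicit; re-verifying $\phi$ itself is harmless but unnecessary, since \Cref{prop:bij_YZ} already gives a bijection on the unrestricted sets and the restriction needs nothing beyond $\phi$ fixing all $f_i$ with $i\ge 1$.
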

\begin{proof}
  The bijection \( \phi \) in \Cref{prop:bij_YZ} for $u=0$ only modifies the
  frequency of $0$, and hence has no effect on the number of appearances of
  the odd parts. Therefore, \( \phi \) restricts to a bijection
  between \( \mathcal{Y}^o_{j,r,k} \) and \( \mathcal{Z}^o_{j,r,k} \).
\end{proof}

Finally, we use \eqref{eq:W_bar} to express the generating function for \( \mathcal{Y}^o_{j,r,k} \) as a sum of products.

\begin{prop}\label{pro:gf Y}
  For non-negative integers \( j,r \), and \( k \) with
  \( j+r \leq k \), we have
  \begin{equation}\label{eq:gf Y}
    \sum_{f \in \mathcal{Y}^o_{j,r,k}} q^{|f|}  
    = \sum_{s=0}^j  \frac{(-q^2;q^2)_\infty(q^{ 2 \lfloor \frac{k+2-r+j-2s}{2} \rfloor}, q^{2k+4 - 2 \lfloor \frac{k+2-r+j-2s}{2} \rfloor} , q^{2k+4}; q^{2k+4})_\infty}{(q^2;q^2)_\infty}.
  \end{equation}
\end{prop}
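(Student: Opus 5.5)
The plan is to decompose $\mathcal{Y}^o_{j,r,k}$ according to the value of $f_0$, and to identify each piece with a shifted copy of a set $\overline{\mathcal{W}}_{k,a}$ from Definition~\ref{def:W}. Fix $\ell\in\{0,\dots,j\}$ and set $c_\ell:=\ell+\max\{\ell-(j-r),0\}$; the values $c_\ell$ are distinct. Take a sequence $f\in\mathcal{Y}^o_{j,r,k}$ with $f_0=c_\ell$. Its remaining constraints are:
\begin{itemize}
\item $f_i+f_{i+1}\le k$ for $i\ge1$;
\item $f_1\le k-c_\ell$;
\item odd parts appear an even number of times.
\end{itemize}
Parts equal to $0$ contribute nothing to the weight. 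Hence the generating function of this piece is exactly $\sum_{f\in\overline{\mathcal{W}}_{k,k-c_\ell}}q^{|f|}$, provided $0\le k-c_\ell\le k$. This holds because $c_\ell\le j+\max\{r,0\}\cdot[\ell>j-r]\le j+r\le k$; more precisely $c_\ell\le 2\ell-(j-r)\le j+r$ when $\ell>j-r$, and $c_\ell=\ell\le j$ otherwise. Summing over $\ell$ gives
\[
\sum_{f\in\mathcal{Y}^o_{j,r,k}}q^{|f|}=\sum_{\ell=0}^{j}\ \sum_{f\in\overline{\mathcal{W}}_{k,k-c_\ell}}q^{|f|}.
\]

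Next I would apply the product side of \eqref{eq:W_bar} to each term with $a=k-c_\ell$. This produces
\[
\frac{(-q^2;q^2)_\infty\,(q^{2\lfloor (k-c_\ell+2)/2\rfloor},q^{2k+4-2\lfloor (k-c_\ell+2)/2\rfloor},q^{2k+4};q^{2k+4})_\infty}{(q^2;q^2)_\infty}.
\]
It then remains to match the multiset $\{\lfloor (k+2-c_\ell)/2\rfloor : 0\le\ell\le j\}$ with the multiset $\{\lfloor (k+2-r+j-2s)/2\rfloor : 0\le s\le j\}$, up to the symmetry $b\mapsto k+2-b$. That symmetry does not change the product, since its two non-$q^{2k+4}$ factors $q^{2b}$ and $q^{2k+4-2b}$ are simply swapped. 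Only the exponent $2\lfloor\cdot\rfloor$ appears, so it suffices to compare $\lfloor (k+2-c_\ell)/2\rfloor$ with the target floors.

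I expect this combinatorial matching of indices, with its case split, to be the main obstacle. Take first $j\ge r$.
\begin{itemize}
\item For $\ell\le j-r$ we have $c_\ell=\ell$, which gives the floors $\lfloor (k+2-\ell)/2\rfloor$ for $\ell=0,\dots,j-r$.
\item For $\ell=j-r+t$ with $1\le t\le r$ we have $c_\ell=j-r+2t$, which gives $\lfloor (k+2-j+r)/2\rfloor-t$.
\end{itemize}
On the target side, with $s$ running from $0$ to $j$, write the floors as $\lfloor (k+2-r+j)/2\rfloor-s$. Applying the reflection $b\mapsto k+2-b$ converts a floor of a value into a ceiling of the complementary value. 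Using this, I would check that the values $\lfloor (k+2-\ell)/2\rfloor$ for $0\le\ell\le j-r$ pair with the reflected versions of the target floors. Concretely, $\lfloor (k+2-\ell)/2\rfloor$ and $k+2-\lfloor (k+2-\ell')/2\rfloor=\lceil (k+2+\ell')/2\rceil$ interleave into a run of consecutive integers.

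I would verify the count carefully on small cases, for instance $r=0$ and $j=1$. There the left side gives $\{\lfloor (k+2)/2\rfloor,\lfloor (k+1)/2\rfloor\}$, and the right side gives $\{\lfloor (k+3)/2\rfloor,\lfloor (k+1)/2\rfloor\}$. Reflecting $\lfloor (k+3)/2\rfloor$ to $\lceil (k+1)/2\rceil$ makes these agree with the left side whenever the parities align. The case $j<r$ is analogous: there every $\ell$ satisfies $\ell>j-r$, so $c_\ell=r-j+2\ell$, and both sides become arithmetic progressions of step $1$ in the floor. These match directly, possibly after reflection.
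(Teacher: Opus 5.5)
Your proposal is correct and follows essentially the same route as the paper. You split $\mathcal{Y}^o_{j,r,k}$ by the value of $f_0$, identify each piece with $\overline{\mathcal{W}}_{k,k-f_0}$ by deleting the zeros, apply \eqref{eq:W_bar}, and for $j>r$ absorb the target values $r-j+2s<0$ using the reflection $b\mapsto k+2-b$.

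The matching you left as a check closes cleanly, as in the paper, at the level of the values $c$ rather than of the floors. Since $k+2-\lfloor (k+2-c)/2\rfloor=\lfloor (k+2-(-1-c))/2\rfloor$ for every integer $c$, the reflection pairs $c$ with $-1-c$. Under this pairing, the negative targets $r-j,r-j+2,\dots$ become exactly the elements of $\{0,\dots,j-r-1\}$ of parity opposite to $j-r$, while the non-negative targets are exactly the elements of $F$ with the parity of $j+r$. Your hedge ``whenever the parities align'' is unnecessary, since $\lceil (k+1)/2\rceil=\lfloor (k+2)/2\rfloor$ for all $k$.
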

\begin{proof}
Let us start with the case \( j \leq r \). Then,
  \( \ell+\max\{\ell-(j-r),0 \} = r-j+ 2 \ell \) for
  \( 0 \leq \ell \leq j \). The set of all possible values for \( f_0 \) is
  \[
   F= \{\ell+\max\{\ell-(j-r),0 \} : 0 \leq \ell \leq j\} = \{ r-j+2s: 0 \leq s \leq j\}.
  \]
 Thus
  $$\mathcal{Y}^o_{j,r,k}= \bigsqcup_{s=0}^j \{f \in \mathcal{Y}^o_{j,r,k} : f_0 = r-j+2s\}.$$
  Moreover, the set $\{f \in \mathcal{Y}^o_{j,r,k} : f_0 = a\}$ is in bijection with $\overline{\mathcal{W}}_{k,k-a}$ (by removing the parts $0$). 
  Hence
\begin{align*}
\sum_{f \in \mathcal{Y}^o_{j,r,k}} q^{|f|}  & = \sum_{s=0}^j \sum_{n \geq 0} \overline{W}_{k,k-r+j-2s}(n)q^n,
\end{align*}
and the result follows using \eqref{eq:W_bar}.

  Now suppose \( j>r \). Then the set
  \[
   F= \{\ell+\max\{\ell-(j-r),0 \} : 0 \leq \ell \leq j\}
    = \{ 0, 1,\dots,j-r,j-r+2,\dots,j+r \} 
  \]
  can be expressed as the disjoint union $F=F_1 \sqcup F_2$, where
\begin{align*}
F_1 &= \{ j+r, j+r-2, j+r-4 , \cdots  \}\\
&= \{r-j+2s : s = \lfloor (j-r)/2 \rfloor,\lfloor (j-r)/2 \rfloor+1,\dots,j\},
\end{align*}  
 and
\begin{align*}
F_2 &= \{ j-r-1, j-r-3, j-r-5 , \cdots  \}\\
&= \{j-r-1-2s: s = 0,1,\dots,\lfloor (j-r)/2 \rfloor -1\}.
\end{align*}   
  By \eqref{eq:W_bar}, the corresponding generating functions are, respectively,
  \begin{multline}
    \label{eq:first}
   \sum_{f \in \mathcal{Y}^o_{j,r,k}: f_0 \in F_1} q^{|f|}=  \sum_{s=\lfloor (j-r)/2 \rfloor}^j \sum_{n \geq 0} \overline{W}_{k,k-(r-j+2s)}(n)q^n  \\
    = \sum_{s=\lfloor (j-r)/2 \rfloor}^j \frac{(-q^2;q^2)_\infty(q^{2 \lfloor \frac{k-(r-j+2s)+2}{2} \rfloor}, q^{2k+4-2 \lfloor \frac{k-(r-j+2s)+2}{2} \rfloor}, q^{2k+4}; q^{2k+4})_\infty}{(q^2;q^2)_\infty},
  \end{multline}
  and
  \begin{multline}\label{eq:second}
    \sum_{f \in \mathcal{Y}^o_{j,r,k}: f_0 \in F_2} q^{|f|}=  \sum_{s=0}^{\lfloor (j-r)/2 \rfloor -1} \sum_{n \geq 0} \overline{W}_{k,k-(j-r-1-2s)}(n)q^n  \\
    = \sum_{s=0}^{\lfloor (j-r)/2 \rfloor -1} \frac{(-q^2;q^2)_\infty(q^{2 \lfloor \frac{k-(j-r-1-2s)+2}{2} \rfloor}, q^{2k+4-2 \lfloor \frac{k-(j-r-1-2s)+2}{2} \rfloor}, q^{2k+4}; q^{2k+4})_\infty}{(q^2;q^2)_\infty}.
  \end{multline}
  We claim that
  \[
    2k+4-2 \left\lfloor \frac{k-(j-r-1-2s)+2}{2} \right\rfloor
    = 2 \left\lfloor \frac{k-(r-j+2s)+2}{2} \right\rfloor.
  \]
  If \( k-j+r \) is odd, then
  \[
    2k+4-2 \left\lfloor \frac{k-(j-r-1-2s)+2}{2} \right\rfloor
    = 2 \left( \frac{k+j-r-2s+1}{2} \right)
    = 2 \left\lfloor \frac{k+j-r-2s+2}{2} \right\rfloor,
  \]
  and hence the claim holds. The even case follows similarly. The
  summands in the right-hand sides of \eqref{eq:second} and
  \eqref{eq:first} are the same. Adding \eqref{eq:first} and \eqref{eq:second} together
  completes the proof.
\end{proof}

The proof of Theorem \ref{thm:main} follows directly from combining Propositions \ref{pro:1}, \ref{prop:bij_XZ_odd}, \ref{prop:bij_YZ_odd} and \ref{pro:gf Y}.

\medskip
Let us conclude this section with the proof of \Cref{cor:odd}.
\begin{proof}[Proof of \Cref{cor:odd}]
  First, substituting \( j=a \) and \( r=0 \) into \Cref{thm:main}
  yields
  \begin{multline}\label{eq:main r=0}
    \sum_{s_1 \ge \cdots \ge s_{k} \geq 0}
    \frac{q^{s_1^2 + \cdots + s_{k}^2 - (s_1 + \cdots + s_a) + (s_{a+1} - s_{a+2} + \cdots +(-1)^{k-a-1} s_{k})}}
    {(q^2;q^2)_{s_1 - s_2} \cdots (q^2;q^2)_{s_{k-1} - s_{k}}(q^2;q^2)_{s_{k}}}\\
    = \sum_{s=0}^a  \frac{(-q^2;q^2)_\infty(q^{ 2 \lfloor \frac{k+a-2s+2}{2} \rfloor}, q^{2k+4 - 2 \lfloor \frac{k+a-2s+2}{2} \rfloor} , q^{2k+4}; q^{2k+4})_\infty}{(q^2;q^2)_\infty}.
  \end{multline}
  The left-hand side of the desired identity is obtained immediately.
  On the other hand, the right-hand side is written in a slightly
  different form. Note that \( k-i+2 \leq k+2 \) for
  \( i = 0, \dots, a \), while \( k+a-2s+2 \geq k+2 \) for
  \( s = 0, \dots, \lfloor a/2 \rfloor \), and \( k+a-2s+2 < k+2 \)
  for \( s = \lfloor a/2 \rfloor +1, \dots, a \). To obtain the
  desired right-hand side from \eqref{eq:main r=0}, it suffices to
  show that
  \begin{multline*}
    \left\{ 2\left\lfloor \frac{k-i+2}{2} \right\rfloor : i = 0, \dots, a\right\} 
    = \left\{ 2\left\lfloor \frac{k+a-2s+2}{2} \right\rfloor: s = \lfloor a/2 \rfloor+1, \dots, a\right\} \\
    \sqcup \left\{ 2k+4- 2\left\lfloor \frac{k+a-2s+2}{2} \right\rfloor : s = 0, \dots, \lfloor a/2 \rfloor \right\}.
  \end{multline*}

  By an argument similar to that in the proof of \Cref{pro:gf Y}, an
  elementary property of the floor function yields
  \[
    2k+4- 2\left\lfloor \frac{k+a-2s+2}{2} \right\rfloor = 2\left\lfloor \frac{k-a+2s+3}{2} \right\rfloor.
  \]
  The proof follows from the fact that the set
  \( \{k-i+2: i=0, \dots, a\} \) can be written as a disjoint union
  \begin{multline*}
    \{k-i+2: i=0, \dots, a\} = \left(\{k-a+2, k-a+4, k-a+6, \dots \} \cap \{1, \dots, k+2\}\right) \\
    \sqcup \left(\{k-a+3, k-a+5, k-a+7, \dots \} \cap \{1, \dots, k+2\}\right),
  \end{multline*}
  where the first and second sets are equal to, respectively,
  \[
    \{k+a-2s+2: s = \lfloor a/2 \rfloor+1, \dots, a\}, \qand
    \{k-a+2s+3: s = 0, \dots, \lfloor a/2 \rfloor \}.
  \]
\end{proof}

\section{Proof of Theorem \ref{thm:even1}}
\label{sec:general-case-2}
We now turn to the proof of  Theorem \ref{thm:even1}, which involves different parity conditions. In this section, the $(-1)$-frame sequences will be useful.

First define a subset of \( \shift(\mathcal{X}_{j,r,k,0}) \) with parity conditions.
\begin{defn}\label{def:Xe}
  Let \( a,b \), and \( k \) be non-negative integers with
  \( 2a + 2b \leq k \). Define \( \mathcal{X}^e_{a,b,k} \) to be the
  set of all pairs \( (\bla, \fs_{-1}(\bla)) \), where
  \( \bla = (\lambda^{(1)},\dots,\lambda^{(k)}) \) is a
  \( k \)-tuple of partitions, subject to the conditions that for all $m \in \{1, \dots, k\}$,
  \begin{itemize}
  \item each part of \( \lambda^{(m)} \) is at least
    \( m + \max\{ m - 2a, 0\} + \max\{m-(k-2b), 0\} \) for each \( m = 1,\dots,k \),
  \item \( \lambda^{(m)} \) is a partition into even parts.
  \end{itemize}
\end{defn}

We first compute the generating function for \( \mathcal{X}^e_{a,b,k} \).

\begin{prop}\label{pro:gf_Xe}
Let \( a,b \), and \( k \) be non-negative integers with
  \( 2a + 2b \leq k \). Then
  \[
    \sum_{(\bla, \fs_{-1}(\bla)) \in \mathcal{X}_{a,b,k}^e} q^{|(\bla, \fs_{-1}(\bla))|} 
    = \sum_{ s_1 \geq \dots \geq s_k \geq 0 } \frac{q^{s_1^2 + \cdots + s_k^2 -2(s_2 + s_4 + \cdots + s_{2a}) +2(s_{k-2b+1} + s_{k-2b+3} + \cdots + s_{k-1})}}{(q^2;q^2)_{s_1 - s_2} \cdots (q^2;q^2)_{s_{k-1} - s_k}(q^2;q^2)_{s_k}}.
  \]
\end{prop}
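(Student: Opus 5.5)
The plan is to mirror the proof of \Cref{pro:1}: decompose $\mathcal{X}^e_{a,b,k}$ according to the lengths of the components, and compute the generating function of each piece as a product over $m$.

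\textbf{Decomposition.} For $s_1\geq\dots\geq s_k\geq 0$, let $X^e_{a,b}(s_1,\dots,s_k)$ be the set of $\bla$ with $(\bla,\fs_{-1}(\bla))\in\mathcal{X}^e_{a,b,k}$ and $\lambda^{(m)}$ of length $s_m-s_{m+1}$ (with $s_{k+1}=0$). On this set the frame sequence $\fs_{-1}(\bla)$ is constant. By \Cref{def:u_0-stuff} with $u=-1$, its weight is
\[
s_1^2+\dots+s_k^2-2(s_1+\dots+s_k).
\]
So $\mathcal{X}^e_{a,b,k}$ is a disjoint union of the sets $X^e_{a,b}(s_1,\dots,s_k)\times\{\fs_{-1}(s_1,\dots,s_k)\}$, exactly as in \eqref{eq:Xo_union}.

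\textbf{Generating function of one piece.} Partitions into $\ell$ even parts, each at least $c$, are generated by $q^{2\lceil c/2\rceil \ell}/(q^2;q^2)_\ell$. Set
\[
c_m := m+\max\{m-2a,0\}+\max\{m-(k-2b),0\}, \qquad e_m:=2\lceil c_m/2\rceil, \qquad e_0=0.
\]
Then
\[
\sum_{\bla\in X^e_{a,b}(s_1,\dots,s_k)} q^{|\bla|}=\frac{q^{\sum_m e_m(s_m-s_{m+1})}}{(q^2;q^2)_{s_1-s_2}\cdots(q^2;q^2)_{s_k}} .
\]
I will rewrite the exponent by Abel summation as $\sum_m (e_m-e_{m-1})s_m$. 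Since $2a\leq k-2b$, the range of $m$ splits into three regimes, and I compute the increments $e_m-e_{m-1}$ in each.
\begin{itemize}
\item For $1\le m\le 2a$ we have $c_m=m$, so the increment is $2$ for $m$ odd and $0$ for $m$ even.
\item For $2a<m\le k-2b$ we have $c_m=2m-2a$, which is even, and the increment is always $2$. This includes $m=2a+1$, since $e_{2a}=2a$.
\item For $m=k-2b+t$ with $1\le t\le 2b$ we have $c_m=2m-2a+t$, so $e_m=2m-2a+t+[t\text{ odd}]$. The increment is $4$ for $t$ odd and $2$ for $t$ even; at $t=1$ this uses $e_{k-2b}=2(k-2b)-2a$.
\end{itemize}

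\textbf{Assembling.} Adding $-2s_m$ from the frame-sequence weight to each increment gives the following contributions. For $m\le 2a$ one gets $-2s_m$ exactly when $m$ is even. For the middle range one gets $0$. For $m=k-2b+t$ one gets $+2s_m$ exactly when $t$ is odd. The total exponent is therefore
\[
s_1^2+\dots+s_k^2-2(s_2+s_4+\dots+s_{2a})+2(s_{k-2b+1}+s_{k-2b+3}+\dots+s_{k-1}),
\]
and summing over all $(s_1,\dots,s_k)$ yields the stated formula.

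\textbf{Main obstacle.} There is no conceptual difficulty; the only delicate point is the bookkeeping at the regime boundaries $m=2a+1$ and $m=k-2b+1$, where $c_m$ changes slope. The degenerate cases $a=0$ or $b=0$ make the first or last regime empty and should be checked directly, but they follow from the same computation.
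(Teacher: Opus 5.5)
Your proposal is correct and follows essentially the paper's own proof: the same decomposition by $(s_1,\dots,s_k)$, the same frame weight $s_1^2+\dots+s_k^2-2(s_1+\dots+s_k)$, and the same three-regime split $m\le 2a$, $2a<m\le k-2b$, $m>k-2b$ of the lower bounds on the even parts. The only difference is presentation: you write the exponent by Abel summation as increments $e_m-e_{m-1}$, while the paper computes the three products $A_1,A_2,A_3$ and telescopes each; the resulting linear forms agree term by term, including at the boundaries, where your formula $e_{k-2b}=2(k-2b)-2a$ also holds when the middle regime is empty.
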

\begin{proof}
  This proof is analogous to that of \Cref{pro:1}. 

  For non-negative integers \( s_1,\dots,s_k \) with
  \( s_1 \ge \cdots \ge s_k \geq 0 \), define
  \( X_{a,b}^e(s_1,\dots,s_k) \) to be the set of \( k \)-tuples of
  partitions \( \bla = (\lambda^{(1)},\dots,\lambda^{(k)}) \) such
  that $(\bla,\fs_0(\bla)) \in \mathcal{X}_{a,b,k}^e$ and for each
  \( m \), the length of \( \lambda^{(m)} \) is \( s_m - s_{m+1} \).
  Then \(\fs_{-1}(\bla)\) is the same for all
  \(\bla \in X_{a,b}^e(s_1,\dots,s_k)\); let us denote it by
  \( \fs_{-1}(s_1,\dots,s_k) \).

    \begin{equation}\label{eq:fs'_size}
|\fs_{-1}(s_1, \dots, s_k)| = s_1^2 + \cdots + s_k^2 - 2 (s_1 + \cdots + s_k).
  \end{equation}
 
  We can express \( \mathcal{X}_{a,b,k}^e \) as the disjoint union
  \begin{equation}\label{eq:Xe_union}
    \mathcal{X}_{a,b,k}^e = \bigsqcup_{s_1 \geq \dots \geq s_k \geq 0} X_{a,b}^e(s_1,\dots,s_k) \times \{\fs_{-1}(s_1,\dots,s_k)\},
  \end{equation}

 To compute the generating function for $X_{a,b}^e(s_1,\dots,s_k)$, we distinguish the different cases in $\max\{ m - 2a, 0\} + \max\{m-(k-2b), 0\}$. We obtain
$$ \sum_{ \bla \in X_{a,b}^e(s_1,\dots,s_k)} q^{|\bla|}
    = A_1 \cdot A_2 \cdot A_3,$$
with
\begin{align*}
A_1 &= \sum q^{|\lambda^{(1)}| + \cdots + |\lambda^{(2a)}|},\\
A_2 &= \sum q^{|\lambda^{(2a+1)}| + \cdots + |\lambda^{(k-2b)}|},\\
A_3 &= \sum q^{|\lambda^{(k-2b+1)}| + \cdots + |\lambda^{(k)}|},
\end{align*}
where the sums run respectively on all tuples of partitions $(\lambda^{(1)}, \dots, \lambda^{(2a)})$, $(\lambda^{(2a+1)}, \dots , \lambda^{(k-2b)})$, and $(\lambda^{(k-2b+1)}, \dots, \lambda^{(k)})$ such that $(\lambda^{(1)},\dots,\lambda^{(k)}) \in X_{a,b}^e(s_1,\dots,s_k)$.

  For all $m \in \{1, \dots, 2a\}$, \( \lambda^{(m)} \) is a partition into $s_m - s_{m+1}$ even parts at least
  \( m \). Thus
\begin{equation}\label{eq:Xe_term1}
    A_1 = \frac{q^{2s_1 + 2s_3 + \cdots + 2s_{2a-1}} q^{-(2a) s_{2a+1}}}{(q^2;q^2)_{s_1 - s_2} \cdots (q^2;q^2)_{s_{2a} - s_{2a+1}}}.
  \end{equation}

For all $m \in \{2a+1, \dots, k-2b\}$,
  \( \lambda^{(m)} \) is a partition into $s_m - s_{m+1}$ even parts at least \( 2m - 2a \), hence 
  \begin{equation}\label{eq:Xe_term2}
    A_2 = \frac{q^{(2a+2)s_{2a+1}}q^{2s_{2a+2} + 2s_{2a+3} + \cdots + 2s_{k-2b}} q^{-(2k-2a-4b) s_{k-2b+1}}}{(q^2;q^2)_{s_{2a+1} - s_{2a+2}} \cdots (q^2;q^2)_{s_{k-2b} - s_{k-2b+1}}}.
  \end{equation}
  
  For all $m \in \{k-2b+1, \dots , k\}$,  \( \lambda^{(m)} \) is a partition into even parts at least
 \( 3m - k -2a + 2b \), hence
  \begin{equation}\label{eq:Xe_term3}
    A_3 = \frac{q^{(2k-2a-4b+4)s_{k-2b+1}}q^{2s_{k-2b+2} + 4s_{k-2b+3} + 2s_{k-2b+4} + 4s_{k-2b+5} + \cdots + 4s_{k-1} + 2s_k}}{(q^2;q^2)_{s_{k-2b+1} - s_{k-2b+2}} \cdots (q^2;q^2)_{s_k}}.
  \end{equation}

  Thus, by \eqref{eq:Xe_term1}, \eqref{eq:Xe_term2}, and
  \eqref{eq:Xe_term3}, we obtain
  \[
    \sum_{ \bla \in X_{a,b}^e(s_1, \dots, s_k) } q^{|\bla|} = \frac{q^{2(s_1 + s_3 + \cdots + s_{2a-1}) + 2(s_{2a+1} + s_{2a+2} + \cdots + s_{k-2b}) + (4s_{k-2b+1} + 2s_{k-2b+2} + 4s_{k-2b+3} + \cdots + 4s_{k-1} + 2s_k)}}{(q^2;q^2)_{s_1 - s_2} \cdots (q^2;q^2)_{s_{k-1} - s_k} (q^2;q^2)_{s_k}}.
  \]
  
  Combining this with \eqref{eq:fs'_size} and \eqref{eq:Xe_union}
  yields the desired formula.
  
\end{proof}

Following the same structure of proof as for Theorem \ref{thm:main}, we  define a set  \( \mathcal{Y}^e_{a,b,k} \) which will be in bijection with \( \mathcal{X}_{a,b,k}^e \) and \( \mathcal{Z}^e_{a,b,k} \).

\begin{defn}\label{def:Ye}
  For non-negative integers \( a,b \), and \( k \) with
  \( 2a + 2b \leq k \), we define \( \mathcal{Y}^e_{a,b,k} \) to be
  the set of all frequency sequences \( (f_i)_{i \in \Z} \) such that
  \begin{itemize}
  \item \( f_i + f_{i+1} \leq k \) for all \( i \geq 0 \),
  \item \(f_i=0\) for all \(i<0\),
  \item \( f_0 \in \{ 2(\ell+\max\{\ell-(a-b),0 \}) : 0 \leq \ell \leq a\} \), and
  \item \( f_i \) is even if \( i \) is even. That is, even parts appear an even number of times.
  \end{itemize}
\end{defn}

Now we prove that these three sets are indeed in bijection.

\begin{prop}\label{prop:bij_XZ_even}
  There exists a weight-preserving bijection between
  \( \mathcal{X}^e_{a,b,k} \) and \( \mathcal{Z}^e_{a,b,k} \).
\end{prop}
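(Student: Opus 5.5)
We will take $j=2a$ and $r=2b$ and apply the machinery of Section~\ref{sec:pm} with $u=0$. Since $2a+2b\leq k$, the condition $j+r\leq k$ holds. Comparing Definition~\ref{def:Xe} with the definition of $\shift(\mathcal{X}_{j,r,k,u})$, a pair $(\bla,\fs_{-1}(\bla))$ lies in $\shift(\mathcal{X}_{2a,2b,k,0})$ exactly when each part of $\lambda^{(m)}$ is at least $m+\max\{m-2a,0\}+\max\{m-(k-2b),0\}$. Hence
\[
\mathcal{X}^e_{a,b,k}=\{(\bla,\fs_{-1}(\bla))\in \shift(\mathcal{X}_{2a,2b,k,0}) : \text{all parts of all } \lambda^{(m)} \text{ are even}\}.
\]
Similarly, $\mathcal{Z}_{2a,2b,k,0}$ consists of sequences with $f_i+f_{i+1}\leq k$ for $i\geq 0$, $f_i=0$ for $i<0$, and $f_0\leq 2a-\max\{f_0+f_1-(k-2b),0\}$. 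The last condition is equivalent to the pair of conditions $f_0\leq 2a$ and $2f_0+f_1\leq k-2b+2a$. Therefore $\mathcal{Z}^e_{a,b,k}$ is precisely the subset of $\mathcal{Z}_{2a,2b,k,0}$ in which every even part appears an even number of times.

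By Proposition~\ref{lem:shift_invar}, $\Lambda$ is a weight-preserving bijection from $\shift(\mathcal{X}_{2a,2b,k,0})$ to $\mathcal{Z}_{2a,2b,k,0}$. It therefore suffices to show that $\Lambda$ maps the parity-restricted subset onto the parity-restricted subset. For this we use the chain $\theta^{(s_1)}=\fs_{-1}(\bla),\dots,\theta^{(0)}=\Lambda(\bla,\fs_{-1}(\bla))$ of Lemma~\ref{lem:shifted_pm_parity}.

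The base case is the observation that in $\fs_{-1}(\bla)$ the nonzero frequencies sit only at the odd positions $-1,1,3,\dots$. So every even part of $\theta^{(s_1)}$ (including the part $0$) has frequency $0$, which is even.

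For the forward inclusion, take $(\bla,\fs_{-1}(\bla))\in\mathcal{X}^e_{a,b,k}$, so every $\lambda_i$ is even. Iterating the forward direction of Lemma~\ref{lem:shifted_pm_parity} from $i=s_1-1$ down to $0$ shows that every even part of each $\theta^{(i)}$ appears an even number of times. In particular this holds for $\theta^{(0)}$, which already lies in $\mathcal{Z}_{2a,2b,k,0}$ and hence in $\mathcal{Z}^e_{a,b,k}$.

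For the reverse inclusion, take $f\in\mathcal{Z}^e_{a,b,k}$ and set $(\bla,\fs_{-1}(\bla))=\Lambda^{-1}(f)\in\shift(\mathcal{X}_{2a,2b,k,0})$, with $\theta^{(0)}=f$. We argue by induction on $i$ going upward. Suppose every even part of $\theta^{(i)}$ appears an even number of times. The reverse direction of Lemma~\ref{lem:shifted_pm_parity} then gives two things: every even part of $\theta^{(i+1)}$ appears an even number of times, and $\lambda_i$ is even. Carrying this up to $i=s_1-1$ shows that all $\lambda_i$ are even, so $\Lambda^{-1}(f)\in\mathcal{X}^e_{a,b,k}$.

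The main point needing care is this reverse direction. The parity lemma is an "if and only if" at each step, but to use it we must know the hypothesis on $\theta^{(i+1)}$. We obtain it from the reverse-particle-motion part of the proof of Lemma~\ref{lem:pm_parity}, transported through the shift of Lemma~\ref{lem:pm_parity_odd}. This is exactly how the analogous Proposition~\ref{prop:bij_XZ_odd} was handled. We also need to check that the negative index $-1$ causes no trouble. This holds because $\theta^{(0)}$ has $f_{-1}=0$, and $-1$ is odd, so it never enters the parity condition.
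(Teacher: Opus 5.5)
Your proposal is correct and is essentially the paper's own argument. Taking \(j=2a\), \(r=2b\), \(u=0\), you restrict the bijection \(\Lambda\colon \shift(\mathcal{X}_{2a,2b,k,0})\to\mathcal{Z}_{2a,2b,k,0}\) of Proposition~\ref{lem:shift_invar} to the parity-restricted subsets by using Lemma~\ref{lem:shifted_pm_parity} in both directions, as in Proposition~\ref{prop:bij_XZ_odd}, and you also spell out what the paper leaves implicit: the identification of \(\mathcal{X}^e_{a,b,k}\) and \(\mathcal{Z}^e_{a,b,k}\) as these subsets, and the base case that \(\fs_{-1}(\bla)\) has zero frequency at every even position.
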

\begin{proof}
  The argument parallels that of \Cref{prop:bij_XZ_odd}. Here we set
  \( j = 2a \) and \( r = 2b \). In that proof, replace $ j $ by
  $ 2a$, $ r$ by $ 2b$, \Cref{pro:bij_XZ} by \Cref{lem:shift_invar} ,
  \( \mathcal{X}^o_{j,r,k} \) by\( \mathcal{X}^e_{a,b,k} \) ,
  \( \mathcal{Z}^o_{j,r,k} \) by \( \mathcal{Z}^e_{a,b,k} \), and
  \( \fs_0 \) by \( \fs_{-1} \), respectively. Then substitute
  Lemmas~\ref{lem:pm_parity} with \Cref{lem:shifted_pm_parity}. With
  these replacements, the desired conclusion follows.
\end{proof}

\begin{prop}\label{prop:bij_YZ_even}
  There exists a weight-preserving bijection between
  \( \mathcal{Y}^e_{a,b,k} \) and \( \mathcal{Z}^e_{a,b,k} \).
\end{prop}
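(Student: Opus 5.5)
The plan is to reuse the bijection \( \phi \) of \Cref{prop:bij_YZ} with \( j=2a \), \( r=2b \) and \( u=0 \), exactly as in the proof of \Cref{prop:bij_YZ_odd}. The difference is that the parity condition now also concerns the part \(0\), whose frequency \( \phi \) changes, so an extra check is needed.

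First I will identify the two sets as parity-restricted subsets of the sets of \Cref{def:XYZ}. For \( \mathcal{Z}_{2a,2b,k,0} \), the condition \( f_0 \leq 2a - \max\{f_0+f_1-(k-2b),0\} \) is equivalent to the pair of conditions \( f_0 \leq 2a \) and \( 2f_0 + f_1 \leq k-2b+2a \). Hence
\[
  \mathcal{Z}^e_{a,b,k} = \{ f \in \mathcal{Z}_{2a,2b,k,0} : f_i \text{ even for all even } i\}.
\]
For \( \mathcal{Y} \), note that \( 2(\ell+\max\{\ell-(a-b),0\}) \) equals \( \ell'+\max\{\ell'-(2a-2b),0\} \) with \( \ell'=2\ell \). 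So \( \mathcal{Y}^e_{a,b,k} \) is exactly the set of \( f \in \mathcal{Y}_{2a,2b,k,0} \) with all even-indexed frequencies even, provided I show the following: the allowed values of \( f_0 \) in \( \mathcal{Y}^e_{a,b,k} \) are precisely the values allowed in \( \mathcal{Y}_{2a,2b,k,0} \) that \( \phi \) sends to an even number. This claim is the key step.

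Since \( \phi \) only modifies \( f_0 \) and is a weight-preserving bijection \( \mathcal{Y}_{2a,2b,k,0}\to\mathcal{Z}_{2a,2b,k,0} \), it suffices to check the following. An \( f\in\mathcal{Y}_{2a,2b,k,0} \) with even-indexed frequencies \( f_i \) (\( i\ge 2 \)) even satisfies \( \phi(f)_0 \) even if and only if \( f_0 \) lies in the set of \Cref{def:Ye}. I will split into the two cases of \Cref{prop:bij_YZ}.

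If \( a\ge b \) (so \( j\ge r \)), the values \( f_0\le 2a-2b \) are fixed by \( \phi \), so \( \phi(f)_0 \) is even exactly when \( f_0=2\ell \) with \( \ell\le a-b \). The remaining values are \( f_0 = 2a-2b+2\ell' \), which are sent to \( 2a-2b+\ell' \). This image is even if and only if \( \ell' = 2\ell'' \), that is \( f_0 = 2(a-b)+4\ell'' \) with \( 1\le\ell''\le b \). Writing \( \ell = a-b+\ell'' \), these are exactly the values \( 2(\ell+\ell-(a-b)) \) for \( a-b<\ell\le a \).

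If \( a<b \), then \( f_0 = 2b-2a+2\ell' \) is sent to \( \ell' \). This image is even if and only if \( \ell'=2\ell \), giving \( f_0 = 2(b-a)+4\ell = 2(\ell+\ell-(a-b)) \) with \( 0\le \ell\le a \), again matching \Cref{def:Ye}.

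In both cases \( \phi \) therefore restricts to a bijection from the subset of \( \mathcal{Y}_{2a,2b,k,0} \) with even-part parity and admissible \( f_0 \) onto the subset of \( \mathcal{Z}_{2a,2b,k,0} \) with even-part parity. The latter is \( \mathcal{Z}^e_{a,b,k} \), and the former is \( \mathcal{Y}^e_{a,b,k} \). The frequencies \( f_i \) for \( i\ge1 \) are untouched by \( \phi \), so their parities are preserved. The constraints \( f_i+f_{i+1}\le k \) are handled by \Cref{prop:bij_YZ} itself, and \( \phi \) is weight-preserving because parts \(0\) contribute nothing to the weight.

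The main obstacle is this bookkeeping of which values of \( f_0 \) correspond under \( \phi \). Everything else is inherited directly from \Cref{prop:bij_YZ}.
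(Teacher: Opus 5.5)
Your proof is correct and is essentially the paper's argument: restrict the map $\phi$ of Proposition~\ref{prop:bij_YZ} with $j=2a$, $r=2b$, and verify case by case that the values of $f_0$ allowed in $\mathcal{Y}^e_{a,b,k}$ are exactly those allowed in $\mathcal{Y}_{2a,2b,k,0}$ whose image under $\phi$ is even. Your single ``if and only if'' covers at once the two inclusions the paper checks separately.

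One wording slip: $\mathcal{Y}^e_{a,b,k}$ is not the set of all $f\in\mathcal{Y}_{2a,2b,k,0}$ with even-indexed frequencies even. For $a\ge b\ge 1$, the value $f_0=2a-2b+2$ is even and allowed in $\mathcal{Y}_{2a,2b,k,0}$, but it is excluded from $\mathcal{Y}^e_{a,b,k}$, and $\phi$ sends it to the odd value $2a-2b+1$. Your later characterisation via the parity of $\phi(f)_0$ is the correct one, so the argument is unaffected.
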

\begin{proof}
  From the definitions of the subsets
  \( \mathcal{Y}^e_{a,b,k} \subseteq \mathcal{Y}_{2a,2b,k,0} \) and
  \( \mathcal{Z}^e_{a,b,k} \subseteq \mathcal{Z}_{2a,2b,k,0} \), every
  element of \( \mathcal{Y}^e_{a,b,k} \) and
  \( \mathcal{Z}^e_{a,b,k} \) has an even \( 0 \)th frequency, and
  each even part appears an even number of times. We now show that the
  bijection \( \phi \) constructed in \Cref{prop:bij_YZ} restricts to
  a bijection from \( \mathcal{Y}^e_{a,b,k} \) to
  \( \mathcal{Z}^e_{a,b,k} \).

  Unlike \Cref{prop:bij_YZ_odd}, because of the additional parity
  conditions defining \( \mathcal{Y}^e_{a,b,k} \) and
  \( \mathcal{Z}^e_{a,b,k} \), every element in these sets must have
  an even \( 0 \)th frequency. Hence, we need to verify explicitly
  that the map \( \phi \) preserves this property.

  If we take the variables \( j, r \), and the parameter \( \ell \) in
  \Cref{prop:bij_YZ} to be even, that is,
  \[
    j = 2a, \quad r = 2b, \quad \ell = 2m,
  \]
  then the restriction of \( \phi \) to \( \mathcal{Y}^e_{a,b,k} \)
  can be described explicitly as follows:
  \[
    (\dots, 0, f_0,f_1,f_2, \dots) \mapsto (\dots, 0, f_0', f_1, f_2,\dots),
  \]
  where if \( a \geq b \), then
  \[
    f_0' =
    \begin{cases}
      f_0 & \mbox{if \( f_0 \leq 2(a-b) \)},\\
      2(a-b+m) & \mbox{if \( f_0 = 2(a-b+2m) \) for some \( 2m \in \{1,\dots,2b\} \),}
    \end{cases}
  \]
  and if \( a < b \), then
  \[
    f_0' = 2m, \quad\text{for \( f_0 = 2(b-a+2m) \)
  with \( 2m \in \{0,\dots,2a\}\).}
  \]

  By the explicit description of \( \phi \), if \( f_0 \) is even then
  \( f_0' \) is even. Hence,
  \( \phi(\mathcal{Y}^e_{a,b,k}) \subseteq \mathcal{Z}^e_{a,b,k} \).
  Moreover, by the explicit formula again, it is easy to verify that
  the value \( f_0 \) obtained from an even \( f_0' \) always
  satisfies the defining condition of \( f_0 \) in
  \( \mathcal{Y}^e_{a,b,k} \). This shows
  \( \phi(\mathcal{Z}^e_{a,b,k}) \subseteq \mathcal{Y}^e_{a,b,k} \) and completes the proof.
\end{proof}

Finally, we use \Cref{thm:1} to express the generating function for \( \mathcal{Y}^o_{j,r,k} \) as a sum of products.

\begin{prop}\label{pro:2}
  For non-negative integers \( a \) and \( b \) with
  \( 2a + 2b \leq k \), we have
  \[
    \sum_{ f \in \mathcal{Y}^e_{a,b,k} } q^{|f|} =  
    \sum_{ s = 0 }^{ a } \frac{(-q;q^2)_\infty (q^{k+1+2a-2b-4s}, q^{k+3-2a+2b+4s}, q^{2k+4}; q^{2k+4})_\infty}{(q^2;q^2)_\infty}.
  \]
\end{prop}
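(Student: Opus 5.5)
The plan mirrors the proof of \Cref{pro:gf Y}. We decompose $\mathcal{Y}^e_{a,b,k}$ according to the value of $f_0$. Removing the zeros from a sequence with $f_0=c$ gives a sequence with $f_1\le k-c$ and $f_i+f_{i+1}\le k$ for $i\ge1$, in which even parts still appear an even number of times. Hence $\{f\in\mathcal{Y}^e_{a,b,k}: f_0=c\}$ is in weight-preserving bijection with $\mathcal{W}_{k,k-c}$. Since every admissible $c$ is even, $k-c\equiv k \pmod 2$, so \Cref{thm:1} applies. It gives the generating function
\[
\frac{(-q;q^2)_\infty(q^{k-c+1},q^{k+3+c},q^{2k+4};q^{2k+4})_\infty}{(q^2;q^2)_\infty}.
\]

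Next we determine the set of admissible values
\[
F=\{2(\ell+\max\{\ell-(a-b),0\}):0\le\ell\le a\}.
\]
If $a\le b$, then $F=\{2(b-a)+4s: 0\le s\le a\}$. Substituting $c=2b-2a+4s$ gives exactly the summand $(q^{k+1+2a-2b-4s},q^{k+3-2a+2b+4s},q^{2k+4};q^{2k+4})_\infty$, and summing over $s$ finishes this case.

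If $a>b$, then $F=\{0,2,\dots,2(a-b)\}\cup\{2(a-b)+4,\dots,2(a+b)\}$. We split it as $F=F_1\sqcup F_2$, where
\[
F_1=\{2(b-a)+4s:\lfloor (a-b)/2\rfloor\le s\le a\}=\{\dots,2(a+b)-4,\,2(a+b)\}
\]
is the part congruent to $2(a+b)$ modulo $4$, and
\[
F_2=\{2(a-b)-2-4s: 0\le s\le \lceil (a-b)/2\rceil-1\}
\]
is the remaining residue class within $\{0,\dots,2(a-b)\}$. The elements of $F_1$ contribute the summands for $s=\lfloor(a-b)/2\rfloor,\dots,a$ directly.

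For $F_2$, we use the symmetry of the product: the triple $(q^{x},q^{2k+4-x},q^{2k+4};q^{2k+4})_\infty$ depends only on $\{x,2k+4-x\}$. The value $c=2(a-b)-2-4s$ has exponents $k+1-c=k+3-2(a-b)+4s$ and $k+3+c=k+1+2(a-b)-4s$. Set $s'=(a-b)-1-s$... more precisely, we want an index $t\in\{0,\dots,\lfloor(a-b)/2\rfloor-1\}$ such that $k+1+2a-2b-4t$ equals one of these exponents. Solving $k+1+2a-2b-4t=k+3-2a+2b+4s$ gives $4t=4(a-b)-2-4s$, which is not an integer. So we match instead with $k+3-2a+2b+4t=k+3-2(a-b)+4s$, which gives $t=s$.

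With $t=s$, the summand for index $t$ is $(q^{k+1+2a-2b-4t},q^{k+3-2a+2b+4t})$, while the summand coming from $c$ is $(q^{k+3-2a+2b+4s},q^{k+1+2a-2b-4s})$. These are the same unordered pair. So each $c\in F_2$ contributes exactly the summand $s$ of the target sum, for $s=0,\dots,\lceil(a-b)/2\rceil-1$.

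It remains to check that the two index ranges tile $\{0,\dots,a\}$ with no overlap. The main obstacle is this bookkeeping, specifically the parity of $a-b$. When $a-b$ is odd, $\lceil(a-b)/2\rceil=\lfloor(a-b)/2\rfloor+1$, so the two ranges seem to share the index $\lfloor(a-b)/2\rfloor$. But in that case $F_1$ begins at $s=\lceil(a-b)/2\rceil$, because its smallest element must be nonnegative. So the ranges are $\{0,\dots,\lceil\cdot\rceil-1\}$ and $\{\lceil\cdot\rceil,\dots,a\}$, which tile correctly. The statement ``$s\ge\lfloor(a-b)/2\rfloor$'' for $F_1$ must be replaced by $s\ge\lceil (a-b)/2\rceil$, and we verify $|F_1|+|F_2|=a+1$ in both parity cases. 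Adding the two contributions then gives the claimed formula.
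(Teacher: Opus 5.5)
Your proof is correct and follows the paper's route: slice $\mathcal{Y}^e_{a,b,k}$ by the even value $c=f_0$ and identify each slice with $\mathcal{W}_{k,k-c}$ (note $0\le c\le 2a+2b\le k$, so \Cref{thm:1} applies). For $a>b$, you then split the admissible values into two residue classes modulo $4$ and match the second class to the target summands via the symmetry $x\leftrightarrow 2k+4-x$ of the triple product.

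Your final ranges are the right bookkeeping: $s\ge\lceil (a-b)/2\rceil$ for $F_1$ and $s\le\lceil (a-b)/2\rceil-1$ for $F_2$. The paper's stated ranges with $\lfloor (a-b)/2\rfloor$ are off by one when $a-b$ is odd: they include $c=-2$ in $F_1$ and omit $c=0$ from $F_2$. This is harmless only because $c=-2$ and $c=0$ give the same product.
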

\begin{proof}
  This proof is similar to that of \Cref{pro:gf Y}. From
  \Cref{thm:1}, the generating function for the frequency sequences in \( \mathcal{Y}^e_{a,b,k} \) satisfying \( f_0 = 2s \) is
\begin{equation}\label{eq:certain f0}
  \frac{(-q;q^2)_\infty (q^{k+1-2s}, q^{k+3+2s}, q^{2k+4}; q^{2k+4})_\infty}{(q^2;q^2)_\infty}.
\end{equation}

We use this to compute the generating function for \( \mathcal{Y}^e_{a,b,k} \).
If \( a \leq b \), then the set of all possible values for \( f_0 \) is
\[
  \{ 2(\ell+\max\{\ell-(a-b),0 \}) : 0 \leq \ell \leq a\} = \{ -2a + 2b + 4 \ell : 0 \leq \ell \leq a\},
\]
and the proof follows immediately from \eqref{eq:certain f0}. 

If \( a > b \), then the set
\[
 F= \{ 2(\ell+\max\{\ell-(a-b),0 \}) : 0 \leq \ell \leq a\} = \{ 0,2,4, \dots, 2a-2b-2, 2a-2b, 2a-2b+4, \dots, 2a+2b-4, 2a+2b\}
\]
can be expressed as the disjoint union $F= F_1 \sqcup F_2$, where
\[
F_1 =  \{2a+2b, 2a+2b-4, 2a+2b-8, \dots \}
\]
and
 \[
F_2 = \{ 2a-2b-2, 2a-2b-6, 2a-2b-10, \dots \}.
\]
The first set is given by
\[
 F_1= \{2b-2a+4s : s = \lfloor (a-b)/2 \rfloor, \lfloor (a-b)/2 \rfloor +1, \dots, a \},
\]
and the second by
\[
F_2=  \{ 2a - 2b - 2 -4s : s = 0,1, \dots, \lfloor (a-b)/2 \rfloor -1 \}.
\]
By~\eqref{eq:certain f0}, the corresponding generating functions are, respectively,
\[
\sum_{ f \in \mathcal{Y}^e_{a,b,k}: f_0 \in F_1} q^{|f|} =    \sum_{ s = \lfloor (a-b)/2 \rfloor }^{ a } \frac{(-q;q^2)_\infty (q^{k+1-(2b-2a+4s)}, q^{k+3+(2b-2a+4s)}, q^{2k+4}; q^{2k+4})_\infty}{(q^2;q^2)_\infty},
\]
and
\begin{multline*}
\sum_{ f \in \mathcal{Y}^e_{a,b,k}: f_0 \in F_2} q^{|f|} =   \sum_{ s = 0 }^{ \lfloor (a-b)/2 \rfloor -1 } \frac{(-q;q^2)_\infty (q^{k+1-(2a-2b-2-4s)}, q^{k+3+(2a-2b-2-4s)}, q^{2k+4}; q^{2k+4})_\infty}{(q^2;q^2)_\infty} =  \\
  \sum_{ s = 0 }^{ \lfloor (a-b)/2 \rfloor -1 } \frac{(-q;q^2)_\infty (q^{k+1+ 2a-2b-4s}, q^{k+3-2a+2b+4s}, q^{2k+4}; q^{2k+4})_\infty}{(q^2;q^2)_\infty}.
\end{multline*}
Adding the two expressions completes the proof.
\end{proof}

\section{Proof of \Cref{thm:even2} and Corollary \ref{cor:even}}
\label{sec:last}

The beginning of this section is very similar to the previous one, and so we give less detail here. However, the end of the proof of \Cref{thm:even2} requires a new argument, which we detail in Proposition \ref{prop:endtheven2}.

\begin{defn}\label{def:Xe'}
  Let \( a,b \), and \( k \) be non-negative integers with
  \( 2a + 2b -1 \leq k \). Define \( \widetilde{\mathcal{X}}^e_{a,b,k} \)
  to be the set of all pairs  \( (\bla, \fs_{-1}(\bla)) \), where
  \( \bla = (\lambda^{(1)},\dots,\lambda^{(k)}) \) is a
   \( k \)-tuple of partitions, subject to the conditions that for all $m \in \{1, \dots, k\}$,
  \begin{itemize}
  \item each part of \( \lambda^{(m)} \) is at least
    \( m + \max\{ m - 2a, 0\} + \max\{m-(k-2b+1), 0\} \) for each \( m = 1,\dots,k \), and
  \item \( \lambda^{(m)} \) is a partition into even parts.
  \end{itemize}
\end{defn}

We first compute the generating function for  \( \widetilde{\mathcal{X}}^e_{a,b,k} \).
\begin{prop}\label{pro:gf_Xe'}
Let \( a,b \), and \( k \) be non-negative integers with
  \( 2a + 2b -1 \leq k \). Then
  \[
    \sum_{(\bla,\fs_{-1}(\bla)) \in \widetilde{\mathcal{X}}_{a,b,k}^e} q^{|(\bla,\fs_{-1}(\bla))|} 
    = \sum_{ s_1 \geq \dots \geq s_k \geq 0 } \frac{q^{s_1^2 + \cdots + s_k^2 -2(s_2 + s_4 + \cdots + s_{2a}) +2(s_{k-2b+2} + s_{k-2b+4} + \cdots + s_{k})}}{(q^2;q^2)_{s_1 - s_2} \cdots (q^2;q^2)_{s_{k-1} - s_k}(q^2;q^2)_{s_k}}.
  \]
\end{prop}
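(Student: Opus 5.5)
The plan is to mirror the proof of \Cref{pro:gf_Xe} exactly, only moving the second threshold from $k-2b$ to $k-2b+1$. First fix $s_1 \geq \dots \geq s_k \geq 0$. Let $\widetilde X^e_{a,b}(s_1,\dots,s_k)$ be the set of $k$-tuples $\bla$ with $(\bla,\fs_{-1}(\bla)) \in \widetilde{\mathcal{X}}^e_{a,b,k}$ and $\lambda^{(m)}$ of length $s_m-s_{m+1}$. Then write $\widetilde{\mathcal{X}}^e_{a,b,k}$ as the disjoint union of $\widetilde X^e_{a,b}(s_1,\dots,s_k)\times\{\fs_{-1}(s_1,\dots,s_k)\}$, as in \eqref{eq:Xe_union}. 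By \eqref{eq:fs'_size}, the frame sequence contributes $q^{s_1^2+\dots+s_k^2-2(s_1+\dots+s_k)}$. It therefore suffices to show that the generating function of $\widetilde X^e_{a,b}(s_1,\dots,s_k)$ is
\[
\frac{q^{2(s_1+s_3+\dots+s_{2a-1}) + 2(s_{2a+1}+\dots+s_{k-2b+1}) + (4s_{k-2b+2}+2s_{k-2b+3}+4s_{k-2b+4}+\dots+2s_{k-1}+4s_k)}}{(q^2;q^2)_{s_1-s_2}\cdots(q^2;q^2)_{s_k}}.
\]
Adding the frame-sequence weight to this exponent gives exactly the exponent in the statement.

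To compute this, use the fact recalled in the proof of \Cref{pro:1}: partitions into $\ell$ even parts, each at least $c$, are generated by $q^{2\lceil c/2\rceil \ell}/(q^2;q^2)_\ell$. Then split the product over $m$ into three ranges.
\begin{itemize}
\item For $1\le m\le 2a$, the lower bound is $c_m=m$. The rounded values are $2,2,4,4,\dots,2a,2a$, and summation by parts gives the factor $A_1$ of \eqref{eq:Xe_term1}, namely $2(s_1+s_3+\dots+s_{2a-1})-2a\,s_{2a+1}$ in the exponent.
\item For $2a+1\le m\le k-2b+1$, the bound is $c_m=2m-2a$, which is already even. This gives $(2a+2)s_{2a+1}+2(s_{2a+2}+\dots+s_{k-2b+1})-(2k-2a-4b+2)s_{k-2b+2}$.
\item For $k-2b+2\le m\le k$, the bound is $c_m=3m-k-2a+2b-1$. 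It is odd at $m=k-2b+2$ and alternates in parity afterwards, so the rounded values increase by $4,2,4,\dots$. Starting from $2k-2a-4b+6$ at $m=k-2b+2$, this gives $(2k-2a-4b+6)s_{k-2b+2}+2s_{k-2b+3}+4s_{k-2b+4}+\dots+4s_k$.
\end{itemize}
The boundary terms cancel between consecutive blocks. The third block has $2b-1$ indices, so its pattern starts and ends with a $4$.

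The only points needing care, and where I expect any difficulty, are the bookkeeping at the two block boundaries and the degenerate cases. The hypothesis $2a+2b-1\le k$ guarantees $2a\le k-2b+1$, so the three ranges are in the right order and the middle one is nonempty or empty consistently. When $b=0$, the third range is empty and the middle range runs to $k$, which gives no extra term, in agreement with the empty sum $2(s_{k+2}+\dots)$. When $a=0$, the first range is empty. In both cases I would check that the summation-by-parts boundary terms vanish (using $s_{k+1}=0$), so that the final exponent is $s_1^2+\dots+s_k^2-2(s_2+s_4+\dots+s_{2a})+2(s_{k-2b+2}+s_{k-2b+4}+\dots+s_k)$, as claimed.
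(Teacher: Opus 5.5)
Your proposal is correct and follows the paper's proof essentially verbatim: the same decomposition by $(s_1,\dots,s_k)$, the same three ranges $m\le 2a$, $2a<m\le k-2b+1$, $k-2b+1<m\le k$ with lower bounds $m$, $2m-2a$, $3m-k-2a+2b-1$, and the same exponents for $A_1,A_2,A_3$, which combine with $|\fs_{-1}|$ to give the stated numerator. Your explicit attention to the block boundaries and to the degenerate cases $a=0$ and $b=0$ (with the convention $s_{k+1}=0$) is a welcome addition that the paper leaves implicit.
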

\begin{proof}
  This proof follows the same argument as \Cref{pro:gf_Xe}, with the
  following modifications: \( \mathcal{X}_{a,b,k}^e \) and
  \( X_{a,b}^e(s_1, \dots, s_k) \) are replaced by
  \( \widetilde{\mathcal{X}}_{a,b,k}^e \) and
  \( \widetilde{X}_{a,b}^e(s_1, \dots, s_k) \), respectively, and the part leading
  to \eqref{eq:Xe_term2} and \eqref{eq:Xe_term3} is replaced by the
  following.
  
  For \( 2a < m \leq k-2b+1 \), parts of \( \lambda^{(m)} \) are at
  least \( 2m - 2a \), hence
  \begin{equation*}
    A_2 = \frac{q^{(2a+2)s_{2a+1}}q^{2s_{2a+2} + 2s_{2a+3} + \cdots + 2s_{k-2b+1}} q^{-(2k-2a-4b+2) s_{k-2b+2}}}{(q^2;q^2)_{s_{2a+1} - s_{2a+2}} \cdots (q^2;q^2)_{s_{k-2b+1} - s_{k-2b+2}}}.
  \end{equation*}
  For \( k-2b+1 < m \leq k \), parts of \( \lambda^{(m)} \) are at least
  \( 3m - k -2a + 2b -1 \), hence
  \begin{equation*}
    A_3 = \frac{q^{(2k-2a-4b+6)s_{k-2b+2}}q^{2s_{k-2b+3} + 4s_{k-2b+4} + 2s_{k-2b+5} + 4s_{k-2b+6} + \cdots + 2s_{k-1} + 4s_{k}}}{(q^2;q^2)_{s_{k-2b+1} - s_{k-2b+2}} \cdots (q^2;q^2)_{s_k}}.
  \end{equation*}

  This yields
  \[
    \sum_{ \bla \in \widetilde{X}_{a,b}^e(s_1, \dots, s_k) } q^{|\bla|} = \frac{q^{2(s_1 + s_3 + \cdots + s_{2a-1}) + 2(s_{2a+1} + s_{2a+2} + \cdots + s_{k-2b} + s_{k-2b+1}) + (4s_{k-2b+2} + 2s_{k-2b+3} + 4s_{k-2b+4} + \cdots + 2s_{k-1} + 4s_k)}}{(q^2;q^2)_{s_1 - s_2} \cdots (q^2;q^2)_{s_{k-1} - s_k} (q^2;q^2)_{s_k}}.
  \]
  Therefore, we obtain the desired formula.
\end{proof}

We show that  \( \widetilde{\mathcal{X}}^e_{a,b,k} \) and \( \widetilde{\mathcal{Z}}^e_{a,b,k} \) are in bijection.

\begin{prop}\label{prop:bij_XZ_even'}
  There exists a weight-preserving bijection between
  \( \widetilde{\mathcal{X}}^e_{a,b,k} \) and \( \widetilde{\mathcal{Z}}^e_{a,b,k} \).
\end{prop}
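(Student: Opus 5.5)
The proof will follow the pattern of \Cref{prop:bij_XZ_even}, with parameters $j=2a$ and $r=2b-1$. We have $j+r = 2a+2b-1 \leq k$, so the sets of \Cref{def:XYZ} are defined. The first step is to identify $\widetilde{\mathcal{X}}^e_{a,b,k}$ as a subset of $\shift(\mathcal{X}_{2a,2b-1,k,0})$. By definition, the latter consists of pairs $(\bla,\fs_{-1}(\bla))$ in which each part of $\lambda^{(m)}$ is at least $m+\max\{m-2a,0\}+\max\{m-(k-2b+1),0\}$. This is exactly the lower bound in \Cref{def:Xe'}, so $\widetilde{\mathcal{X}}^e_{a,b,k}$ is the subset of $\shift(\mathcal{X}_{2a,2b-1,k,0})$ cut out by requiring all parts of all $\lambda^{(m)}$ to be even.

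Next I check that $\widetilde{\mathcal{Z}}^e_{a,b,k}$ is the subset of $\mathcal{Z}_{2a,2b-1,k,0}$ in which even parts appear an even number of times. With $u=0$, the condition defining $\mathcal{Z}_{2a,2b-1,k,0}$ reads
\[
f_0 \leq 2a - \max\{f_0+f_1-(k-2b+1),0\}.
\]
This is equivalent to the two conditions $f_0\leq 2a$ and $2f_0+f_1 \leq k-2b+1+2a$, which are precisely the conditions in \Cref{thm:even2}. The remaining conditions ($f_i+f_{i+1}\leq k$ for $i\geq 0$, and $f_i=0$ for $i<0$) coincide. In this identification the parity condition also constrains $f_0$ to be even, as it does in the target set.

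By \Cref{lem:shift_invar}, $\Lambda$ is a weight-preserving bijection from $\shift(\mathcal{X}_{2a,2b-1,k,0})$ to $\mathcal{Z}_{2a,2b-1,k,0}$. It remains to show that it restricts to the parity subsets, and this is where the main care is needed.

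Take $(\bla,\fs_{-1}(\bla))$ with all $\lambda_i$ even, and consider the chain $\theta^{(s_1)},\dots,\theta^{(0)}$ of \Cref{lem:shifted_pm_parity}. The starting sequence $\theta^{(s_1)}=\fs_{-1}(\bla)$ has nonzero frequencies only at odd indices $-1,1,3,\dots$, since it has the form $(m,0)$ pairs starting at $-1$. Hence every even part, including $0$, appears an even number of times (namely zero) in $\theta^{(s_1)}$. Iterating the forward direction of \Cref{lem:shifted_pm_parity}, every even part appears an even number of times in $\theta^{(0)}=\Lambda(\bla,\fs_{-1}(\bla))$, so the image lies in $\widetilde{\mathcal{Z}}^e_{a,b,k}$.

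Conversely, let $f\in\widetilde{\mathcal{Z}}^e_{a,b,k}$ and run $\Lambda^{-1}$, which gives the same chain read backwards. The reverse implication of \Cref{lem:shifted_pm_parity} shows inductively that each $\theta^{(i+1)}$ has its even parts appearing an even number of times and that each $\lambda_i$ is even. Thus $\Lambda^{-1}(f)\in\widetilde{\mathcal{X}}^e_{a,b,k}$.

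The one point to verify carefully is that \Cref{lem:shifted_pm_parity}, which is stated for $\shift(\mathcal{X}_{j,r,k,u})$ with general $j,r$, applies with $r=2b-1$ odd. Its proof reduces to \Cref{lem:pm_parity_odd} and uses only the shift structure, not the parities of $j$ and $r$, so it does apply. I expect no further obstacle.
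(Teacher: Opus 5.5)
Your proposal is correct and is essentially the paper's own argument. The paper proves this by rerunning the proof of \Cref{prop:bij_XZ_even} with $j=2a$ and $r=2b-1$, i.e.\ restricting the bijection of \Cref{lem:shift_invar} using \Cref{lem:shifted_pm_parity}, which is exactly what you spell out.

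One small point, which the paper also passes over: when $b=0$ you get $r=-1$. This lies outside the hypothesis $r\geq 0$ of \Cref{def:XYZ} and \Cref{pro:bij_XZ}, so your sentence ``the sets of \Cref{def:XYZ} are defined'' needs a patch there. The patch is harmless, because in that case the relevant sets coincide with those for $(j,r)=(\min\{2a,k\},0)$: on the $\mathcal{Z}$ side because $f_0+f_1\leq k$, and on the $\mathcal{X}$ side because $m\leq k$. The argument then goes through unchanged.
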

\begin{proof}
  This is exactly the same as the proof of \Cref{prop:bij_XZ_even} but
  with \( r = 2b-1 \) instead of \( r = 2b \).
  \end{proof}

Finally, we need one last proposition in order to use \Cref{thm:even1} to finish the proof of \Cref{thm:even2}.

\begin{prop}
\label{prop:endtheven2}
The following equality holds:
$$(1+q) \sum_{f\in \widetilde{\mathcal{Z}}_{a,b,k}^e}q^{|f|} = \sum_{f \in \mathcal{Z}_{a,b-1,k}^e}q^{|f|} + q\sum_{f\in \mathcal{Z}_{a,b,k}^e}q^{|f|}.$$
\end{prop}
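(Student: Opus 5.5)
The plan is to split each of the three sets according to the value of \(2f_0+f_1\), so that the identity reduces to a single weight-shifting bijection. All three sets \(\widetilde{\mathcal{Z}}^e_{a,b,k}\), \(\mathcal{Z}^e_{a,b-1,k}\) and \(\mathcal{Z}^e_{a,b,k}\) share the conditions \(f_i+f_{i+1}\le k\) for \(i\ge 0\), \(f_i=0\) for \(i<0\), \(f_0\le 2a\), and "even parts appear an even number of times". They differ only in the bound on \(2f_0+f_1\). Writing \(c:=k-2b+2a\), these bounds are \(c+1\), \(c+2\) and \(c\) respectively.

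For \(t\in\Z\), let \(E_t\) be the set of sequences satisfying the common conditions together with \(2f_0+f_1=t\), and write \(G(S)=\sum_{f\in S}q^{|f|}\). Then
\[
\widetilde{\mathcal{Z}}^e_{a,b,k}=\mathcal{Z}^e_{a,b,k}\sqcup E_{c+1},\qquad
\mathcal{Z}^e_{a,b-1,k}=\mathcal{Z}^e_{a,b,k}\sqcup E_{c+1}\sqcup E_{c+2}.
\]
Substituting these into the claimed equality and cancelling \(G(\mathcal{Z}^e_{a,b,k})+G(E_{c+1})+qG(\mathcal{Z}^e_{a,b,k})\) from both sides, the statement becomes equivalent to
\[
q\,G(E_{c+1})=G(E_{c+2}).
\]

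I will prove this with the map \(\psi\colon E_{c+1}\to E_{c+2}\) that replaces \(f_1\) by \(f_1+1\) and leaves every other frequency unchanged. It adds one part equal to \(1\), so it raises the weight by exactly \(1\). Since \(1\) is odd, the parity conditions are unaffected, and \(f_0\), hence the bound \(f_0\le 2a\), is unchanged. Two constraints remain to be checked.

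\emph{The constraint \(f_0+f_1\le k\).} On \(E_t\) we have \(f_1=t-2f_0\), so \(f_0+f_1=t-f_0\le k\) means \(f_0\ge t-k\). For \(t=c+1\) this reads \(f_0\ge 2a-2b+1\), and for \(t=c+2\) it reads \(f_0\ge 2a-2b+2\). Because \(f_0\) is even, these two conditions coincide. So on both sides the admissible values of \(f_0\) are the same, and for each such \(f_0\) the value of \(f_1\) is forced.

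\emph{The constraint \(f_1+f_2\le k\).} This is the step I expect to be the crux. Raising \(f_1\) by one could break it in the case \(f_1+f_2=k\). But on \(E_{c+1}\) we have \(f_1=c+1-2f_0\equiv k+1 \pmod 2\), and \(f_2\) is even since \(2\) is an even part. Hence \(f_1+f_2\equiv k+1\pmod 2\), so \(f_1+f_2\neq k\) and in fact \(f_1+f_2\le k-1\). Therefore \(\psi(f)\) satisfies \(f_1+f_2\le k\), and \(\psi\) is well defined.

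For the inverse, lowering \(f_1\) by one in an element of \(E_{c+2}\) can only relax the inequalities. It is legitimate because \(f_1=c+2-2f_0\ge c+2-4a=k-2b-2a+2\ge 1\), using the hypothesis \(2a+2b-1\le k\). Together with the equality of the \(f_0\)-ranges established above, this shows that \(\psi\) is a bijection from \(E_{c+1}\) onto \(E_{c+2}\) that increases the weight by \(1\). This gives \(qG(E_{c+1})=G(E_{c+2})\), which completes the proof.
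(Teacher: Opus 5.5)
Your proof is correct and is essentially the paper's argument: your \(E_{c+1}\) and \(E_{c+2}\) are exactly \(\widetilde{\mathcal{Z}}^e_{a,b,k}\setminus\mathcal{Z}^e_{a,b,k}\) and \(\mathcal{Z}^e_{a,b-1,k}\setminus\widetilde{\mathcal{Z}}^e_{a,b,k}\), and both arguments match these two sets by changing \(f_1\) by one. You also supply the well-definedness checks the paper leaves implicit (the parity argument giving \(f_1+f_2\le k-1\), the evenness of \(f_0\) for \(f_0+f_1\le k\), and \(f_1\ge 1\) for the inverse), and your direction \(f_1\mapsto f_1+1\) is the right one: the paper's text writes \(f_1\mapsto f_1-1\) on \(\widetilde{\mathcal{Z}}^e_{a,b,k}\setminus\mathcal{Z}^e_{a,b,k}\), which is reversed relative to its own weight computation.
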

\begin{proof}
We have the inclusions
\( \mathcal{Z}_{a,b,k}^e \subseteq \widetilde{\mathcal{Z}}_{a,b,k}^e
\subseteq \mathcal{Z}_{a,b-1,k}^e \). Moreover, the map
\( (f_0, f_1,f_2, \dots) \mapsto (f_0, f_1-1,f_2, \cdots ) \) is a
bijection from
\( \widetilde{\mathcal{Z}}_{a,b,k}^e \setminus \mathcal{Z}_{a,b,k}^e\)
to
\( \mathcal{Z}_{a,b-1,k}^e \setminus \widetilde{\mathcal{Z}}_{a,b,k}^e
\). Thus
\begin{align*}
  (1+q) \sum_{f \in \widetilde{\mathcal{Z}}_{a,b,k}^e}q^{|f|}
  &= \sum_{f \in \widetilde{\mathcal{Z}}_{a,b,k}^e}q^{|f|} + \sum_{f \in \widetilde{\mathcal{Z}}_{a,b,k}^e \setminus \mathcal{Z}_{a,b,k}^e}q^{|f|+1} + q\sum_{f \in \mathcal{Z}_{a,b,k}^e}q^{|f|}\\
  &= \sum_{f \in \widetilde{\mathcal{Z}}_{a,b,k}^e}q^{|f|} + \sum_{f \in \mathcal{Z}_{a,b-1,k}^e \setminus \widetilde{\mathcal{Z}}_{a,b,k}^e}q^{|f|} + q\sum_{f\in \mathcal{Z}_{a,b,k}^e}q^{|f|} \\
  &= \sum_{f \in \mathcal{Z}_{a,b-1,k}^e}q^{|f|} + q\sum_{f \in \mathcal{Z}_{a,b,k}^e}q^{|f|}.
\end{align*}
\end{proof}

Using \Cref{thm:even1} to evaluate the two sums in Proposition \ref{prop:endtheven2} completes the proof of \Cref{thm:even2}.

\medskip
We conclude with the proof of \Cref{cor:even}.
\begin{proof}[Proof of \Cref{cor:even}]
  By \Cref{thm:even1} and \Cref{thm:even2} with \( b = 0 \), we obtain
  \begin{equation}\label{eq:cor_even1}
 \sum_{ s_1 \geq \dots \geq s_k \geq 0 } \frac{q^{s_1^2 + \cdots + s_k^2 -2(s_2 + s_4 + \cdots + s_{2a})}}{(q^2;q^2)_{s_1 - s_2} \cdots (q^2;q^2)_{s_{k-1} - s_k}(q^2;q^2)_{s_k}} 
   = \sum_{ s = 0 }^{ a } \frac{(-q;q^2)_\infty (q^{k+1+2a-4s}, q^{k+3-2a+4s}, q^{2k+4}; q^{2k+4})_\infty}{(q^2;q^2)_\infty},
  \end{equation}
  and
  \begin{multline}\label{eq:cor_even2}
   \sum_{ s_1 \geq \dots \geq s_k \geq 0 } \frac{q^{s_1^2 + \cdots + s_k^2 -2(s_2 + s_4 + \cdots + s_{2a})}}{(q^2;q^2)_{s_1 - s_2} \cdots (q^2;q^2)_{s_{k-1} - s_k}(q^2;q^2)_{s_k}} \\
   = \frac{(-q^3;q^2)_\infty}{(q^2;q^2)_\infty}\sum_{ s = 0 }^{ a } \Bigg[ (q^{k+3+2a-4s}, q^{k+1-2a+4s}, q^{2k+4}; q^{2k+4})_\infty + q (q^{k+1+2a-4s}, q^{k+3-2a+4s}, q^{2k+4}; q^{2k+4})_\infty \Bigg],
  \end{multline}
  respectively. We first verify that, under this specialisation, the
  right-hand sides of \eqref{eq:cor_even1} and \eqref{eq:cor_even2}
  coincide. Indeed, in the right-hand side of \eqref{eq:cor_even2}, we
  apply the change of variables \( s \mapsto a-s \) in the second
  \( q \)-Pochhammer symbol appearing inside the summation. Under this
  reindexing, the second \( q \)-Pochhammer symbol is equal to the
  first one, and hence two right-hand sides are the same.

  We next show that the desired right-hand side can be obtained from
  the right-hand side of \eqref{eq:cor_even1}. The argument is similar
  to that used in the proof of~\Cref{cor:odd}. We split the sum in the
  right-hand side of \eqref{eq:cor_even1} into the two ranges
  \( s = 0, \dots, \lfloor a/2 \rfloor \) and
  \( s = \lfloor a/2 \rfloor +1, \dots, a \). For
  \( s = \lfloor a/2 \rfloor +1, \dots, a \), we have
  \( k+3-2a+4s < k+1 \). Thus, the right-hand side of
  \eqref{eq:cor_even1} can be rewritten as
  \begin{multline}\label{eq:cor_even3}
    \sum_{ s = 0 }^{ \lfloor a/2 \rfloor } \frac{(-q;q^2)_\infty (q^{k+1+2a-4s}, q^{k+3-2a+4s}, q^{2k+4}; q^{2k+4})_\infty}{(q^2;q^2)_\infty}\\
    + \sum_{ s = \lfloor a/2 \rfloor +1}^{ a } \frac{(-q;q^2)_\infty (q^{k+3-2a+4s}, q^{k+1+2a-4s}, q^{2k+4}; q^{2k+4})_\infty}{(q^2;q^2)_\infty}.
  \end{multline}

  Note that the set \( \{k+1-2i : i = 0, \dots, a\} \) can be decomposed
  as a disjoint union
  \begin{multline*}
    (\{k+1-2a, k+1-2a+4, k+1-2a+8, \dots \} \cap \{1, \dots, k+1\}) \\
    \sqcup  (\{k+3-2a, k+3-2a+4, k+3-2a+8, \dots \} \cap \{1, \dots, k+1\}),
  \end{multline*}
  and the first and second sets are equal to
  \[
    \{k+1+2a-4s: s = \lfloor a/2 \rfloor+1, \dots, a \},\qand \{k+3-2a+4s: s = 0, \dots, \lfloor a/2 \rfloor\},
  \]
  respectively. Combining this decomposition with \eqref{eq:cor_even3}
  yields the desired result.
\end{proof}

\section{Open problems}
\label{sec:problems}
We propose two directions for further research.

First, one may seek combinatorial proofs of the identities that follow
directly from our results. From \Cref{thm:main}, we obtain two
different infinite summation expressions of the same product
expression. More precisely, specializing \Cref{thm:main} with
\( j=k-a \) and \( r=a \), we obtain \( \operatorname{AK}_{a,k}(q) \)
appearing in \Cref{cor:2}. Taking the difference
\( \operatorname{AK}_{a,k}(q) - \operatorname{AK}_{a+1,k}(q) \) yields
\[
  \sum_{s_1 \ge \cdots \ge s_k \geq 0} \frac{q^{s_1^2 + \cdots + s_k^2 - s_1 - \cdots - s_{k-a} + s_{k-a+1} + \cdots + s_k}(1-q^{2s_{k-a}})}{(q^2;q^2)_{s_1 - s_2} \cdots (q^2;q^2)_{s_{k-1} - s_k}(q^2;q^2)_{s_k}}
  = \frac{(-q^2;q^2)_\infty(q^{2a+2}, q^{2k+2-2a}, q^{2k+4}; q^{2k+4})_\infty}{(q^2;q^2)_\infty}.
\]
On the other hand, specializing \Cref{thm:main} with \( j=0 \) and
\( r = k-2a \) gives another identity
\[
  \sum_{s_1 \ge \cdots \ge s_{k} \geq 0}
  \frac{q^{s_1^2 + \cdots + s_{k}^2 + (s_1 - s_2 + s_3 - \cdots + s_{2a-1} - s_{2a}) + (s_{2a+1} + \cdots + s_{k})}}
  {(q^2;q^2)_{s_1 - s_2} \cdots (q^2;q^2)_{s_{k-1} - s_{k}}(q^2;q^2)_{s_{k}}}
  = \frac{(-q^2;q^2)_\infty(q^{2a+2}, q^{2k+2-2a}, q^{2k+4}; q^{2k+4})_\infty}{(q^2;q^2)_\infty}.
\]
By the two identities above, we obtain
\begin{multline}\label{eq:open1}
  \sum_{s_1 \ge \cdots \ge s_k \geq 0} \frac{q^{s_1^2 + \cdots + s_k^2 - s_1 - \cdots - s_{k-a} + s_{k-a+1} + \cdots + s_k}(1-q^{2s_{k-a}})}{(q^2;q^2)_{s_1 - s_2} \cdots (q^2;q^2)_{s_{k-1} - s_k}(q^2;q^2)_{s_k}} \\
  =
  \sum_{s_1 \ge \cdots \ge s_{k} \geq 0}
  \frac{q^{s_1^2 + \cdots + s_{k}^2 + (s_1 - s_2 + s_3 - \cdots + s_{2a-1} - s_{2a}) + (s_{2a+1} + \cdots + s_{k})}}
  {(q^2;q^2)_{s_1 - s_2} \cdots (q^2;q^2)_{s_{k-1} - s_{k}}(q^2;q^2)_{s_{k}}}.
\end{multline}

\begin{problem}
  Find a combinatorial proof of \eqref{eq:open1}.
\end{problem}

Similarly, comparing product expressions leads to identities between
two summation expressions having different numbers of summation
variables. By \cite[Theorem~1.12]{DJJ25} and \Cref{thm:even2}, we
obtain
\begin{multline}\label{eq:open2}
  \sum_{s_1\geq\dots\geq s_{k}\geq0}\frac{q^{2(s_1^2+\dots+s_{k}^2-s_1-\dots-s_{j}+s_{k-r+1}+\dots+s_{k})}(-q^{1+2s_k};q^2)_\infty}{(q^2;q^2)_{s_1-s_2}\cdots(q^2;q^2)_{s_{k-1}-s_{k}}(q^2;q^2)_{s_k}}  = \\
  \sum_{ s_1 \geq \dots \geq s_{2k} \geq 0 } \frac{q^{s_1^2 + \cdots + s_{2k}^2 -2(s_2 + s_4 + \cdots + s_{2j}) +2(s_{2k-2r+2} + s_{2k-2r+4} + \cdots + s_{2k})}}{(q^2;q^2)_{s_1 - s_2} \cdots (q^2;q^2)_{s_{2k-1} - s_{2k}}(q^2;q^2)_{s_{2k}}}.
\end{multline}
\begin{problem}
  Find a combinatorial proof of \eqref{eq:open2} (for instance, via
  particle motion, or by any other combinatorial method).
\end{problem}

Second, we propose generalisations in other directions and alternative
proofs of our results. Stanton~\cite{Stanton2018} proved not only the
identity~\eqref{eq:sAG}, but also a binomial
extension~\cite[Theorem~3.1]{Stanton2018} of the Andrews--Gordon
identities: Let \( j,r \geq 0 \) and \( k \geq 1 \) be integers such
that \( j+r \leq k \). Then
\begin{multline}\label{eq:open4}
  \sum_{s_1 \geq \cdots \geq s_k \geq 0}q^{s_1^2 + \cdots + s_k^2 + s_{k-r+1} + \cdots + s_k} \cdot \frac{q^{-s_1 - \cdots - s_j} (1 + q^{s_1 + s_2})(1 + q^{s_2 + s_3}) \cdots (1 + q^{s_{j-1} + s_j})}{(q)_{s_1 - s_2} \cdots (q)_{s_{k-1} - s_k} (q)_{s_k}} \\
  = \sum_{s=0}^j \binom{j}{s} \frac{(q^{2k+3},q^{k+1-r+j-2s}, q^{k+2+r-j+2s} ; q^{2k+3})_\infty}{(q)_\infty}.
\end{multline}
Moreover, the \(j\) factors \(q^{-s_1}\) and
\(q^{-s_i}(1+q^{s_{i-1}+s_i})\), \(2 \leq i \leq j\) may be replaced
by any \(j\)-element subset of
\(\{q^{-s_1}\} \cup \{q^{-s_i}(1+q^{s_{i-1}+s_i}) : 2 \leq i \leq
k-r\}\). It would be interesting to develop analogous binomial
extensions of our results.
\begin{problem}
  Find binomial extensions of
  \Cref{thm:main},~\Cref{thm:even1},~and~\Cref{thm:even2} in the
  spirit of \eqref{eq:open4}.
\end{problem}

In joint work~\cite{DJJ25} of the authors with Jouhet, Stanton-type
generalisations with binomial coefficients were proved using Bailey
pairs. If explicit formulas answering the previous problem can be
found, it would be natural to seek Bailey pair proofs of those
identities. Likewise, it would be interesting to obtain Bailey pair
proofs of our main results.

\begin{problem}
  Prove \Cref{thm:main},~\Cref{thm:even1},~and~\Cref{thm:even2} using
  Bailey pairs.
\end{problem}

\section*{Acknowledgements}

The authors are supported by the SNSF Eccellenza grant PCEFP2 202784. They are grateful to Frédéric Jouhet for asking whether is is possible to prove the Chern--Li--Stanton--Xue--Yee identity using particle motion and for his comments on an earlier version of this paper.

\bibliographystyle{alpha}
%\bibliography{/Users/jihyeug/Desktop/newbiemacs/nbm-user-settings/references/ref.bib}

\end{document}